\theoremstyle{plain}
\newtheorem{thm}{Theorem}[section]
\newtheorem{lem}[thm]{Lemma}
\newtheorem*{probA}{Problem~A}
\newtheorem*{probB}{Problem~B}
\newtheorem*{probC}{Problem~C}
\newtheorem*{probD}{Problem~D}
\newtheorem*{probE}{Problem~E}
\theoremstyle{remark}
\newtheorem{rem}[thm]{Remark}
\numberwithin{equation}{section}
\def\ch{\operatorname{cosh}}
\def\sh{\operatorname{sinh}}
\def\arcch{\operatorname{arcosh}}
\begin{document}

\title{Logan's problem for Jacobi transforms}

\author{D.~V.~Gorbachev}
\address{D.\,V.~Gorbachev, Tula State University,
Department of Applied Mathematics and Computer Science,
300012 Tula, Russia}
\email{dvgmail@mail.ru}

\author{V.~I.~Ivanov}
\address{V.\,I.~Ivanov, Tula State University,
Department of Applied Mathematics and Computer Science,
300012 Tula, Russia}
\email{ivaleryi@mail.ru}

\author{S.~Yu.~Tikhonov}
\address{S.\,Yu.~Tikhonov,
Centre de Recerca Matem\`atica, Campus de Bellaterra, Edifici C 08193
Bellaterra, Barcelona, Spain; ICREA, Pg. Llu\'is Companys 23, 08010 Barcelona,
Spain, and Universitat Aut\'onoma de Barcelona}
\email{stikhonov@crm.cat}

\date{\today}

\keywords{Logan's problem, positive definite functions, bandlimited functions,
Jacobi transform on the half-line, Fourier transform on the hyperboloid}

\subjclass{42A82, 42A38}

\thanks{The work of the first and second authors was supported by RScF,
grant 18-11-00199, https://rscf.ru/project/18-11-00199/. The work of the third
author was partially supported by grants PID2020-114948GB-I00, 2017 SGR 358,
AP08856479, by the CERCA Programme of the Generalitat de Catalunya, and by the
Spanish State Research Agency, through the Severo Ochoa and Mar{\'\i}a de
Maeztu Program for Centers and Units of Excellence in R$\&$D
(CEX2020-001084-M)}

\maketitle

\begin{abstract}
We consider direct and inverse Jacobi transforms with measures
$d\mu(t)=2^{2\rho}(\sh t)^{2\alpha+1}(\ch t)^{2\beta+1}\,dt$ and
$d\sigma(\lambda)=(2\pi)^{-1}\bigl|\frac{2^{\rho-i\lambda}\Gamma(\alpha+1)\Gamma(i\lambda)}
{\Gamma((\rho+i\lambda)/2)\Gamma((\rho+i\lambda)/2-\beta)}\bigr|^{-2}\,d\lambda$,
respectively. We solve the following generalized Logan problem: to find
\[
\inf\Lambda((-1)^{m-1}f), \quad m\in \mathbb{N},
\]
where $\Lambda(f)=\sup\,\{\lambda>0\colon f(\lambda)>0\}$ and the infimum is
taken over all nontrivial even entire functions $f$ of exponential type that
are Jacobi transforms of positive measures with supports on an interval.
Here, if $m\ge 2$, then we additionally assume that
$\int_{0}^{\infty}\lambda^{2k}f(\lambda)\,d\sigma(\lambda)=0$ for
$k=0,\dots,m-2$.

We prove that admissible functions for this problem are positive definite with
respect to the inverse Jacobi transform. The solution of Logan's problem was
known only when $\alpha=\beta=-1/2$. We find a  unique (up to multiplication by
a positive constant) extremizer $f_m$. The corresponding Logan problem for the
Fourier transform on the hyperboloid $\mathbb{H}^{d}$ is also solved.
Using properties of the extremizer $f_m$ allows us to  give an upper estimate of
the length of a minimal interval containing
not less than  $n$ zeros of
positive definite functions.
Finally, we show that the Jacobi functions form the Chebyshev systems.
\end{abstract}

\section{Introduction}

In this paper we continue the discussion of the generalized Logan problem
for 
 entire functions of exponential type, that are,
functions represented as
compactly supported integral transforms.
 In \cite{GIT19}, we investigated this problem for the Fourier, Hankel, and Dunkl transforms. Here we consider the new case of the Jacobi transform, which is closely related to the harmonic analysis on
  the real hyperbolic spaces \cite{stri}.

 The one-dimensional Logan problem first appeared as a problem in the number theory (see
\cite{Lo83a}). Its multidimensional analogues  are also connected to the number-theoretical methods including
the Selberg sieve
 \cite{Va85}.
Considering various classes of admissible functions in the multivariate Logan's problem  gives rise to
the so called  Delsarte extremal problems, which have numerous applications to
discrete mathematics and metric geometry, see the discussion in \cite{GIT19}.
At present, Logan's and Delsarte's  problems can be considered as an important  part
of uncertainty type extremal problems, where conditions both on a function and its Fourier transform are imposed
\cite{GOS17,CohGon18,GonOliRam21} (see also \cite{Be99,KolRev03,CarMilSou19}).

Typically, the main object in such questions   is the classical Fourier transform in  Euclidean space.
However, similar questions in other  symmetric spaces, especially in hyperbolic spaces,
are of great interest, see e.g. \cite{CZ14,GorIvaSmi17}.
Harmonic analysis in these cases is built with the help of the Fourier--Jacobi transform, see
 \cite{Koor84,stri}.
A particular case of the Jacobi transform is the well-known Mehler--Fock transform \cite{Vil78}.

We would like to stress
  that since
the classes of admissible functions and  corresponding functionals in the multidimensional Logan's problem (see Problem~E for hyperboloid in Section~\ref{sec-Hyp}) are invariant under the group of motions,
the problem reduces to the case of radial functions. Thus, it is convenient to start with the one-dimensional case (Problem~D) and then find a solution
 in the whole generality.

  Positive definiteness of the extremizer in the generalized Logan problem for the Hankel transform (see Problem C below)
turns out to be crucial
to obtain lower bounds for energy in the Gaussian core
model
 \cite{CC18}.
We will see that the extremizer in  the generalized Logan problem for the Jacobi
transform is also positive definite (with respect to Jacobi transform).

\subsection*{Historical background 
}
Logan stated and proved \cite{Lo83a,Lo83b} the following two extremal problems for real-valued
positive definite bandlimited functions on $\mathbb{R}$. Since such functions are even,
we consider these problems for functions on $\mathbb{R}_{+}:=[0,\infty)$.

\begin{probA}
Find the smallest $\lambda_1>0$ such that
\[
 f(\lambda)\le 0,\quad \lambda>\lambda_1,
\]
where $f$ is entire function of exponential type at most $2\tau$ satisfying
\begin{equation}\label{eq1.1}
f(\lambda)=\int_{0}^{2\tau}\cos\lambda t\,d\nu(t),\quad f(0)=1,
\end{equation}
where $\nu$ is a function of bounded variation, non-decreasing in some neighborhood of the origin.
\end{probA}

Logan showed that admissible functions are integrable, $\lambda_1=\pi/2\tau$, and the unique
extremizer is the positive definite function
\[
f_1(\lambda)=\frac{\cos^2(\tau\lambda)}{1-\lambda^2/(\pi/2\tau)^2},
\]
satisfying $\int_{0}^{\infty}f_1(\lambda)\,d\lambda=0$.

Recall that a function $f$ defined on $\mathbb{R}$ is positive definite if for any integer $N$
\[
\sum_{i,j=1}^Nc_i\overline{c_j}\,f(x_i-x_j)\ge 0,\quad \forall\,c_1,\dots,c_N\in
\mathbb{C},\quad \forall\,x_1,\dots,x_N\in \mathbb{R}.
\]
Let $C_b(\mathbb{R}_{+})$ be the space of continuous bounded
functions $f$ on $\mathbb{R}_{+}$ with norm
$\|f\|_{\infty}=\sup_{\mathbb{R}_{+}}|f|$.
For an even function $f\in C_b(\mathbb{R}_{+})$, by Bochner's theorem, $f$ is positive definite if and only~if
\begin{equation*}
f(x)=\int_{0}^{\infty}\cos\lambda t\,d\nu(t),
\end{equation*}
where $\nu$ is a non decreasing function of bounded variation (see, e.g.,
\cite[9.2.8]{Ed79}). In particular, if $f\in L^{1}(\mathbb{R}_{+})$, then
its cosine Fourier transform is nonnegative.

\begin{probB}
Find the smallest $\lambda_2>0$ such that
\[
f(\lambda)\ge 0,\quad \lambda>\lambda_2,
\]
where $f$ is an integrable function satisfying \eqref{eq1.1} and having mean value zero.
\end{probB}

It turns out that admissible functions are integrable with respect to the weight $\lambda^2$,
and $\lambda_2=3\pi/2\tau$. Moreover, the unique extremizer is the positive definite function
\[
f_2(\lambda)=\frac{\cos^2(\tau\lambda)}{(1-\lambda^2/(\pi/2\tau)^2)(1-\lambda^2/(3\pi/2\tau)^2)},
\]
satisfying $
\int_{0}^{\infty}\lambda^{2}f_2(\lambda)\,d\lambda=0$.

Let $m\in\mathbb{N}$. Problems A and B can be considered as special cases of the generalized $m$-Logan problem.

\begin{probC}
Find the smallest $\lambda_m>0$ such that
\[
(-1)^{m-1}f(\lambda)\le 0,\quad \lambda>\lambda_m,
\]
where, for $m=1$, $f$ satisfies \eqref{eq1.1} and, for $m\ge 2$, additionally
\[
f\in L^1(\mathbb{R}_{+}, \lambda^{2m-4}\,d\lambda),\quad \int_{0}^{\infty}\lambda^{2k}
f(\lambda)\,d\lambda=0,\quad k=0,\dots,m-2.
\]
\end{probC}

If $m=1,2$ we recover problems A and B, respectively. We solved  problem~C in \cite{GIT19}.
It turns out that the unique extremizer is the positive definite function
\[
f_m(\lambda)=\frac{\cos^2(\tau\lambda)}{(1-\lambda^2/(\pi/2\tau)^2)
(1-\lambda^2/(3\pi/2\tau)^2)\cdots(1-\lambda^2/((2m-1)\pi/2\tau)^2)},
\]
satisfying $f_m\in L^1(\mathbb{R}_{+}, \lambda^{2m-2}\,d\lambda)$ and
$\int_{0}^{\infty}\lambda^{2m-2}f_m(\lambda)\,d\lambda=0$.

Moreover, we have solved a more general version of the problem C when
$f$ is the Hankel transform of a measure, that is,
\[
f(\lambda)=\int_{0}^{2\tau}j_{\alpha}(\lambda t)\,d\nu(\lambda),\quad f(0)=1,
\]
where $\nu$ is a function of bounded variation, non-decreasing in some neighborhood of the origin,
and for $m\ge 2$
\[
f\in L^1(\mathbb{R}_{+}, \lambda^{2m+2\alpha-3}\,d\lambda),\quad \int_{0}^{\infty}
\lambda^{2k+2\alpha+1}f(\lambda)\,d\lambda=0,\quad k=0,\dots,m-2.
\]
Here $\alpha\ge -1/2$ and $j_{\alpha}(t)=(2/t)^{\alpha}\Gamma(\alpha+1)J_{\alpha}(t)$
is the normalized Bessel function. The representation \eqref{eq1.1} then follows for  $\alpha=-1/2$.
Note that $
j_{\alpha}(\lambda t)$ is the eigenfunction of the following Sturm--Liouville problem:
\[
(t^{2\alpha+1}u_{\lambda}'(t))'+
\lambda^2 t^{2\alpha+1} u_{\lambda}(t)=0,\quad
u_{\lambda}(0)=1,\quad u_{\lambda}'(0)=0,\quad t,\lambda\in
\mathbb{R}_{+}.
\]

\subsection*{$m$-Logan problem for the Jacobi
transform}
In this paper, we solve the analog of Problem~C for the Jacobi
transform with the  kernel $\varphi_{\lambda}(t)$ being the eigenfunction of the Sturm--Liouville problem

\begin{equation}\label{eq1.3}
\begin{gathered}
(\Delta(t)\varphi_{\lambda}'(t))'+
(\lambda^2+\rho^2)\Delta(t)\varphi_{\lambda}(t)=0,
\\
\varphi_{\lambda}(0)=1,\quad \varphi'_{\lambda}(0)=0,
\end{gathered}
\end{equation}
with the
weight function  given by
\begin{equation*}
\Delta(t)=\Delta^{(\alpha,\beta)}(t)=2^{2\rho}(\sh t)^{2\alpha+1}(\ch
t)^{2\beta+1},\quad t\in \mathbb{R}_{+},
\end{equation*}
where
\[
\alpha\geq\beta\geq-1/2,\quad \rho=\alpha+\beta+1.
\]
The following representation for the Jacobi function is known
\[
\varphi_{\lambda}(t)=\varphi_{\lambda}^{(\alpha,\beta)}(t)=
F\Bigl(\frac{\rho+i\lambda}{2},\frac{\rho-i\lambda}{2};\alpha+1;-(\sh
t)^{2}\Bigr),
\]
where $F(a,b;c;z)$ is
the Gauss hypergeometric function.

For the precise definitions of direct and inverse Jacobi transforms, see the next section.
In the case $\alpha=\beta=-1/2$ we have $\Delta(t)=1$ and the Jacobi
transform is reduced to the cosine Fourier transform.

Set, for a real-valued continuous  function $f$ on $\mathbb{R}_{+}$,
\[
\Lambda(f)=\Lambda(f,\mathbb{R}_{+})=\sup\,\{\lambda>0\colon\, f(\lambda)>0\}
\]
{($\Lambda (f)=0$ if $f\leq 0$) and}
$$\Lambda_{m}(f)=\Lambda((-1)^{m-1}f).$$

Consider the class $\mathcal{L}_{m}(\tau,\mathbb{R}_{+})$, $m\in\mathbb{N}$, $\tau>0$, of real-valued even functions $f\in
C_b(\mathbb{R}_{+})$ such that

\smallbreak
(1) $f$ is the Jacobi transform of a measure
\begin{equation}\label{def-sigma1--}
f(\lambda)=\int_{0}^{2\tau}\varphi_{\lambda}(t)\,d\nu(t), \quad \lambda\in
\mathbb{R}_{+}, 
\end{equation}
where $\nu$ is a nontrivial function of bounded variation non-decreasing in some neighborhood of the origin;

\smallbreak
(2) if $m\ge 2$, then, additionally, $f\in L^1(\mathbb{R}_{+},\lambda^{2m-4}\,d\sigma)$
and there holds
\begin{equation}\label{orth}
\int_{0}^{\infty}\lambda^{2k}f(\lambda)\,d\sigma(\lambda)=0,\quad
k=0,1,\dots,m-2,
\end{equation}
where $\sigma$ is the spectral measure of the Sturm--Liouville problem \eqref{eq1.3}, that is,
\begin{equation}\label{def-sigma1}
d\sigma(\lambda)=d\sigma^{(\alpha,\beta)}(\lambda)=s(\lambda)\,d\lambda,
\end{equation}
where the spectral weight
\[
s(\lambda)=s^{(\alpha,\beta)}(\lambda)=(2\pi)^{-1}\Bigl|\frac{2^{\rho-i\lambda}\Gamma(\alpha+1)\Gamma(i\lambda)}
{\Gamma((\rho+i\lambda)/2)\Gamma((\rho+i\lambda)/2-\beta)}\Bigr|^{-2}.
\]

This class $\mathcal{L}_{m}(\tau,\mathbb{R}_{+})$ is not empty. In particular, we will show that it contains the function
\begin{equation}\label{extr}
f_m(\lambda)=\varphi_{\lambda}(\tau)F_m(\lambda),
\end{equation}
where
\begin{equation}\label{extr-}
F_m(\lambda)=\frac{\varphi_{\lambda}(\tau)}{(1-\lambda^2/\lambda_1^2(\tau))\cdots(1-\lambda^2/\lambda_m^2(\tau))}
\end{equation}
and $0<\lambda_{1}(t)<\dots<\lambda_{k}(t)<\cdots$ are the positive zeros of
$\varphi_{\lambda}(t)$ 
as a function in $\lambda$.

The $m$-Logan problem for Jacobi transform on the half-line  is formulated as follows.

\begin{probD}
Find 
\[
L_m(\tau,\mathbb{R}_{+})=\inf\{\Lambda_{m}(f)\colon\, f\in \mathcal{L}_{m}(\tau,\mathbb{R}_{+})\}.
\]
\end{probD}

\begin{rem}
In the case $\alpha=\beta=-1/2$ Problem D becomes Problem C. Even though the ideas to solve Problem D are similar to those we used in the solution of Problem C, the proof is far from being  just a generalization, 
since the weight function $\Delta(t)$ and the
spectral weight
$s(\lambda)$ have  completely
different behavior than in  the case of classical Fourier or Hankel transforms (see \cite{GIT19}).

\end{rem}

\subsection*{The main result
} 



\begin{thm}\label{thm-1}
%
Let $m\in\mathbb{N}$, $\tau>0$. Then
\[
L_m(\tau,\mathbb{R}_{+})=\lambda_m(\tau)
\]
and 
 the function $f_{m}$ is the unique extremizer up to multiplication by a
positive constant.
Moreover,  $f_{m}$ is positive definite with respect to the inverse Jacobi transform~and
\begin{equation}\label{orth-fam}
\int_{0}^{\infty}\lambda^{2k}f_{m}(\lambda)\,d\sigma(\lambda)=0,\quad
k=0,1,\dots,m-1.
\end{equation}
\end{thm}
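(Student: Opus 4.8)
The plan is to follow the classical Logan-problem strategy, adapted to the Jacobi setting, in three stages: a lower bound for $\Lambda_m(f)$ on the whole class, verification that the candidate $f_m$ belongs to $\mathcal L_m(\tau,\mathbb R_+)$ and attains $\lambda_m(\tau)$, and finally the uniqueness and the positive-definiteness/orthogonality conclusions. For the \emph{lower bound}, I would argue by contradiction: suppose $f\in\mathcal L_m(\tau,\mathbb R_+)$ with $\Lambda_m(f)<\lambda_m(\tau)$. Then the sign of $(-1)^{m-1}f$ is nonpositive beyond a point strictly smaller than the $m$-th zero $\lambda_m(\tau)$ of $\varphi_\lambda(\tau)$. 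Using the representation \eqref{def-sigma1--} and the fact that $\varphi_\lambda(t)$ is an even entire function of $\lambda$ of exponential type $2\tau$, one shows $f$ is entire of exponential type $\le 2\tau$. The key device is a \emph{quadrature/positivity pairing}: one multiplies $f$ by a suitable nonnegative test function built from $\varphi_\lambda(\tau)$ and integrates against $d\sigma$, exploiting the orthogonality relations \eqref{orth}. Concretely, consider $\int_0^\infty f(\lambda)\,\varphi_\lambda(\tau)\prod_{j<m}(1-\lambda^2/\lambda_j(\tau)^2)^{-1}\cdot(\text{something})\,d\sigma(\lambda)$; the Paley--Wiener theorem for the Jacobi transform (so that the Jacobi transform of $f\,d\sigma$ is supported in $[0,2\tau]$, paired against a point mass or a function supported at the endpoint) forces a contradiction between the assumed sign pattern and the positivity of $\nu$ near the origin. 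This is the step I expect to be the main obstacle, because the Jacobi spectral weight $s(\lambda)$ grows (rather than being a monomial), the Paley--Wiener theory is more delicate, and one must control the decay of $f$ carefully to justify all the integrations; the $m\ge 2$ case additionally requires iterating this argument through the vanishing moments.

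For the \emph{upper bound and membership}, I would check directly that $f_m(\lambda)=\varphi_\lambda(\tau)F_m(\lambda)$ with $F_m$ as in \eqref{extr-} lies in the class. The removable singularities at $\lambda=\lambda_j(\tau)$, $j=1,\dots,m$, are cancelled by the zeros of $\varphi_\lambda(\tau)$ in the numerator, so $f_m$ is entire; its exponential type is $2\tau$ because $\varphi_\lambda(\tau)$ has type $\tau$ and the rational factor is of type $0$. Positivity of the generating measure $\nu$ near the origin (condition (1)) comes from recognizing $f_m$ as (up to normalization) the Jacobi transform of a positive measure supported on $[0,2\tau]$; this can be obtained by a convolution-type argument, writing $\varphi_\lambda(\tau)F_m(\lambda)$ as the Jacobi transform of the ``generalized translate'' of an explicit nonnegative kernel, using the product formula for $\varphi_\lambda$. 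Since $F_m(\lambda)$ decays like $\lambda^{-2m}|\varphi_\lambda(\tau)|\sim\lambda^{-2m}\lambda^{-\rho}$ for large $\lambda$ (by the known asymptotics $\varphi_\lambda(\tau)=O(\lambda^{-\rho})$) and $s(\lambda)\sim c\lambda^{2\rho-1}$, one verifies $f_m\in L^1(\mathbb R_+,\lambda^{2m-2}\,d\sigma)$ and computes the moments: the integrals $\int_0^\infty\lambda^{2k}f_m(\lambda)\,d\sigma(\lambda)$ for $k=0,\dots,m-1$ vanish by a residue/contour-shift computation (the only surviving boundary term vanishes because of the decay just described), which simultaneously gives \eqref{orth} for $k\le m-2$ and the extra relation $k=m-1$ in \eqref{orth-fam}. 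Finally $\Lambda_m(f_m)=\lambda_m(\tau)$ because $f_m$ has constant sign $(-1)^{m-1}$ on $(\lambda_{m-1}(\tau),\infty)$ away from its last sign change at $\lambda_m(\tau)$: between consecutive zeros $\lambda_j(\tau)$ the factor $\varphi_\lambda(\tau)$ and the rational factor change sign together, so $f_m=\varphi_\lambda(\tau)^2/\prod(1-\lambda^2/\lambda_j^2)\ge 0$ has a definite sign structure, and beyond $\lambda_m(\tau)$ all factors $1-\lambda^2/\lambda_j(\tau)^2$ are negative, pinning the sign.

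For \emph{uniqueness}, suppose $g\in\mathcal L_m(\tau,\mathbb R_+)$ also satisfies $\Lambda_m(g)=\lambda_m(\tau)$. The lower-bound pairing argument, when equality holds, becomes rigid: it forces the Jacobi transform of $g\,d\sigma$ (suitably modified by the rational factor) to be a nonnegative measure concentrated at the single point $t=2\tau$, hence $g$ must be a positive multiple of $f_m$; the vanishing-moment conditions remove any lower-order freedom. The \emph{positive definiteness} of $f_m$ with respect to the inverse Jacobi transform follows once we know $f_m$ itself is, up to normalization, a Jacobi transform of a nonnegative measure — equivalently, by the Jacobi analogue of Bochner's theorem, that the inverse Jacobi transform of $f_m$ is a nonnegative function (or measure) on $[0,\infty)$. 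This is exactly what the convolution representation above yields: $f_m(\lambda)=\varphi_\lambda(\tau)\cdot(\varphi_\lambda(\tau)\prod(1-\lambda^2/\lambda_j^2)^{-1})$ is a product of two bandlimited ``positive-definite'' factors (each being the Jacobi transform of a nonnegative measure — the first is a point-mass-type translate, the second a Fejér-type kernel whose positivity comes from the partial-fraction expansion $\prod_{j=1}^m(1-\lambda^2/\lambda_j^2)^{-1}=\sum_j c_j(1-\lambda^2/\lambda_j^2)^{-1}$ with all $c_j>0$, matching the classical $\operatorname{sinc}$-type computation), and a product of such factors corresponds to a Jacobi convolution of nonnegative measures, which is again nonnegative. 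I would isolate the positivity of the coefficients $c_j$ and the nonnegativity of the Jacobi convolution as the two lemmas doing the real work here.
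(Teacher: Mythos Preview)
The central gap is in your positive-definiteness step. The claim that the partial-fraction coefficients in
\[
\prod_{j=1}^m(1-\lambda^2/\lambda_j^2)^{-1}=\sum_{i=1}^m c_i\,(1-\lambda^2/\lambda_i^2)^{-1}
\]
are all positive is false for $m\ge 2$: a direct computation gives $c_i=\prod_{j}\lambda_j^2\big/\prod_{j\ne i}(\lambda_j^2-\lambda_i^2)$, so $\mathrm{sign}\,c_i=(-1)^{i-1}$. Hence neither the rational factor nor $F_m(\lambda)=\varphi_\lambda(\tau)\prod_{j}(1-\lambda^2/\lambda_j^2)^{-1}$ can be exhibited as a ``Fej\'er-type kernel'' by this route. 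The paper obtains the positivity of $G_m=\mathcal J^{-1}F_m$ by a different and substantially harder argument: after partial fractions one has $G_m|_{[0,\tau]}=p_m(t)=\sum_{i=1}^m B_i\,\varphi_{\lambda_i(\tau)}(t)$ with $B_i>0$ (the alternating signs of the $c_i$ are cancelled by the alternating signs of $\varphi_{\lambda_i}'(\tau)$), but positivity of the $B_i$ does \emph{not} imply $p_m\ge 0$ since the eigenfunctions oscillate. Instead one shows that $p_m$ has a zero of multiplicity $2m-1$ at $t=\tau$ and then invokes the Chebyshev-system property of $\{\varphi_{\lambda_k(\tau)}(t)\}_{k\ge 1}$ on $[0,\tau)$ (established separately via Sturm's theorem) to rule out any further zero. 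This Chebyshev-system ingredient is the real work, and it is absent from your plan.

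Your lower-bound sketch also misses the two specific tools that drive the argument. First, a Gauss quadrature formula $\int_0^\infty h\,d\sigma=\sum_{k\ge 1}\gamma_k(\tau)\,h(\lambda_k(\tau))$, valid for all $h\in\mathcal B_1^{2\tau}$, which together with the moment conditions \eqref{orth} forces any admissible $f$ with $\Lambda_m(f)<\lambda_m(\tau)$ to vanish at every node $\lambda_s(\tau)$ (with double zeros for $s\ge m$). Second, an Akhiezer-type structure lemma for bounded even entire functions of exponential type, which then forces $f=\varphi_\lambda(\tau)^2\cdot(\text{polynomial})/\prod_{k<m}(1-\lambda^2/\lambda_k^2)$, contradicting $f\in L^1(\lambda^{2m-2}\,d\sigma)$. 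The pairing you propose, against $\varphi_\lambda(\tau)\prod_{j<m}(1-\lambda^2/\lambda_j^2)^{-1}\cdot(\text{something})$, is not how the contradiction is produced, and the Paley--Wiener/positivity-of-$\nu$ heuristic alone does not close it. The same quadrature formula, not a residue computation, is what yields the orthogonality relations \eqref{orth-fam} for $f_m$. (Incidentally, your asymptotics are off: $|\varphi_\lambda(\tau)|\asymp\lambda^{-\alpha-1/2}$ and $s(\lambda)\asymp\lambda^{2\alpha+1}$; the exponents involve $\alpha$, not $\rho$.)
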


\begin{rem}\label{rem-zam} We note
the inverse Jacobi transform $g_m(t)=\mathcal{J}^{-1}f_{m}(t)\ge 0$.
Furthermore, the function
$F_m$ given by (\ref{extr-}) is  positive
definite since
$G_m(t)=\mathcal{J}^{-1}F_{m}(t)$ 
 is non-negative and decreases on $[0,\tau]$,
and it has zero of multiplicity $2m-1$ at $t=\tau$.
The relationship between
 $g_m(t)$ and $G_m(t)$ is given by
 $g_m(t)=T^{\tau}G_m(t)$, where $T^{\tau}$ is the generalized translation
operator, see Section~\ref{sec-JH}.
\end{rem}



\subsection*{Structure of the paper}
The presentation follows our  paper \cite{GIT19}.
Section~\ref{sec-JH} contains some facts on the Jacobi harmonic analysis as well as a Gauss
quadrature formula with zeros of the Jacobi functions as nodes.
  In Section~\ref{sec-Cheb}, we prove that the Jacobi functions form the Chebyshev systems, which is used in the proof of Theorem~\ref{thm-1}.

In Section~\ref{sec-LJ}, we give the solution of the generalized Logan problem
for the Jacobi transform. 
 Using Theorem~\ref{thm-1}, in Section~\ref{sec-Hyp} we   solve
the multidimensional Logan problem for the
Fourier transform on the hyperboloid.

Finally,
Section~\ref{sec-JPD} is devoted to the problem on the minimal interval
containing  $n$ zeros of functions represented by the Jacobi transform
of a nonnegative bounded Stieltjes measure. Originally, such questions were investigated by Logan in \cite{Lo83c} for the cosine transform. 
It is worth mentioning that  extremizers in this problem and Problem~D are closely related. 



\section{Elements of  Jacobi harmonic analysis}\label{sec-JH}

Below we give some needed facts;  
see \cite{FlenKoor73,FlenKoor79,Koor75,Koor84,GorIva15}.


Let $\mathcal{E}^{\tau}$ be the class of even entire functions
$g(\lambda)$ of exponential type at most $\tau>0$, satisfying the estimate
$|g(\lambda)|\leq c_g\,e^{\tau|\mathrm{Im}\,\lambda|}$, $\lambda\in \mathbb{C}$.

The Jacobi function $\varphi_{\lambda}(t)$ is an even  analytic function of $t$ on $\mathbb{R}$ and it
belongs to the class $\mathcal{E}^{|t|}$ with respect to $\lambda$.  Moreover, the following
conditions hold:
\begin{equation}\label{eq2.1}
|\varphi_{\lambda}(t)|\leq 1,\quad \varphi_{0}(t)>0, \quad \lambda,t\in
\mathbb{R}.
\end{equation}

From the general properties of the eigenfunctions of the
Sturm--Liouville problem (see, for example, \cite{LevSar88}), one has that,
for $t>0$, $\lambda\in \mathbb{C}$,
\begin{equation}\label{eq2.2}
\varphi_{\lambda}(t)=\varphi_{0}(t)\prod_{k=1}^{\infty}\Bigl(1-\frac{\lambda^2}{\lambda_{k}^2(t)}\Bigr),
\end{equation}
where $0<\lambda_{1}(t)<\dots<\lambda_{k}(t)<\cdots$ are the positive zeros of
$\varphi_{\lambda}(t)$ as a function of $\lambda$.

We also have that $\lambda_{k}(t)=t_{k}^{-1}(t)$, where $t_{k}(\lambda)$ are the positive zeros
of the function $\varphi_{\lambda}(t)$
as a function of $t$.
 The zeros $t_{k}(\lambda)$, as well as the zeros $\lambda_{k}(t)$,
are continuous and 
 strictly  decreasing 
 \cite[Ch.~I,
\S\,3]{LevSar88}.


\subsection*{Properties of some special functions}
In what follows, we will need the asymptotic behavior of the Jacobi function and
spectral weight, see \cite{GorIva15}:
\begin{equation}\label{eq2.3}
\varphi_{\lambda}(t)=\frac{(2/\pi)^{1/2}}{(\Delta(t)s(\lambda))^{1/2}}
\Bigl(\cos \Bigl(\lambda t-\frac{\pi(\alpha+1/2)}{2}\Bigr)+e^{t|\mathrm{Im}\,\lambda|}O(|\lambda|^{-1})\Bigr),\quad |\lambda|\to +\infty,\quad t>0,
\end{equation}
\begin{equation}\label{eq2.4}
s(\lambda)=(2^{\rho+\alpha}\Gamma(\alpha+1))^{-2}\lambda^{2\alpha+1}(1+O(\lambda^{-1})),\quad
\lambda\to +\infty.
\end{equation}

From \eqref{eq2.3} and \eqref{eq2.4} it follows that, for fixed $t>0$ and uniformly on $\lambda\in \mathbb{R}_{+}$,
\begin{equation}\label{eq2.5}
|\varphi_{\lambda}(t)|\lesssim\frac{1}{(\lambda+1)^{\alpha+1/2}},
\end{equation}
where as usual $F_{1}\lesssim F_{2}$ means $F_{1}\le CF_{2}$.
Also we denote
$F_1\asymp F_2$ if ${C}^{-1}F_1\le F_2\le C F_1$ with $C\ge 1$.

In the Jacobi harmonic analysis, an important role is played by the function
\begin{equation}\label{eq2.6}
\psi_{\lambda}(t)=\psi_{\lambda}^{(\alpha,
\beta)}(t)=\frac{\varphi_{\lambda}^{(\alpha, \beta)}(t)}{\varphi_{0}^{(\alpha,
\beta)}(t)}=\frac{\varphi_{\lambda}(t)}{\varphi_{0}(t)},
\end{equation}
which  is the solution of the Sturm--Liouville problem
\begin{equation}\label{eq2.7}
(\Delta_{*}(t)\,\psi_{\lambda}'(t))'+
\lambda^2\Delta_{*}(t)\psi_{\lambda}(t)=0,\quad
\psi_{\lambda}(0)=1,\quad \psi_{\lambda}'(0)=0,
\end{equation}
where $\Delta_{*}(t)=\varphi_{0}^2(t)\Delta(t)$ is the modified weight function.

The positive zeros $0<\lambda_{1}^*(t)<\dots<\lambda_{k}^{*}(t)<\cdots$ of the
function $\psi_{\lambda}'(t)$ of $\lambda$ alternate with the zeros of the
function $\varphi_{\lambda}(t)$ \cite{GorIva15}:
\begin{equation}\label{eq2.8}
0<\lambda_{1}(t)<\lambda_{1}^{*}(t)<\lambda_{2}(t)<\dots<\lambda_{k}(t)<\lambda_{k}^{*}(t)<\lambda_{k+1}(t)<\cdots.
\end{equation}

For the derivative of the Jacobi function one has
\begin{equation}\label{eq2.9}
(\varphi_{\lambda}^{(\alpha,\beta)}(t))_t'=-\frac{(\rho^{2}+\lambda^{2})\sh t\ch
t}{2(\alpha+1)}\,\varphi_{\lambda}^{(\alpha+1,\beta+1)}(t).
\end{equation}
Moreover, according to \eqref{eq1.3},
\[
\bigl\{\Delta(t)\bigl(\varphi_{\mu}(t)\varphi'_{\lambda}(t)-\varphi_{\mu}'(t)\varphi_{\lambda}(t)\bigr)\bigr\}_t'=
(\mu^2-\lambda^2)\Delta(t)\varphi_{\mu}(t)\varphi_{\lambda}(t)
\]
and therefore,
\begin{equation}\label{eq2.10}
\int_{0}^{\tau}\Delta(t)\varphi_{\mu}(t)\varphi_{\lambda}(t)\,dt=
\frac{\Delta(\tau)\bigl(\varphi_{\mu}(t)\varphi'_{\lambda}(t)-\varphi_{\mu}'(\tau)\varphi_{\lambda}(\tau)\bigr)}
{\mu^2-\lambda^2}.
\end{equation}

\begin{lem}
For the Jacobi functions, the following recurrence formula
\begin{multline}\label{eq2.11}
\frac{(\lambda^2+(\alpha+\beta+3)^2)(\sh t\ch
t)^{2}}{4(\alpha+1)(\alpha+2)}\,\varphi_{\lambda}^{(\alpha+2,\beta+2)}(t)\\
=\frac{(\alpha+1)\ch^2t+(\beta+1)\sh^2t}{\alpha+1}\,\varphi_{\lambda}^{(\alpha+1,\beta+1)}(t)-
\varphi_{\lambda}^{(\alpha,\beta)}(t)
\end{multline}
and the formula for derivatives
\begin{equation}\label{eq2.12}
\bigl((\sh t)^{2\alpha+3}(\ch
t)^{2\beta+3}\varphi_{\lambda}^{(\alpha+1,\beta+1)}(t)\bigr)_t'=2(\alpha+1)(\sh t)^{2\alpha+1}(\ch
t)^{2\beta+1}\varphi_{\lambda}^{(\alpha,\beta)}(t)
\end{equation}
are valid.
\end{lem}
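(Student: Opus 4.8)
The plan is to derive both \eqref{eq2.11} and \eqref{eq2.12} from the differentiation formula \eqref{eq2.9} together with the Sturm--Liouville equation \eqref{eq1.3}, so that nothing beyond the facts already collected is needed; \eqref{eq2.12} may equally be recognized as a classical contiguous relation for the Gauss hypergeometric function.

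I would prove \eqref{eq2.12} first, by substituting \eqref{eq2.9} into \eqref{eq1.3}. By \eqref{eq2.9} and $\Delta(t)=2^{2\rho}(\sh t)^{2\alpha+1}(\ch t)^{2\beta+1}$, the quantity $\Delta(t)\varphi_{\lambda}'(t)$ equals the fixed constant $-(\lambda^2+\rho^2)2^{2\rho}/(2(\alpha+1))$ times $\sh t\,\ch t\,(\sh t)^{2\alpha+1}(\ch t)^{2\beta+1}\varphi_{\lambda}^{(\alpha+1,\beta+1)}(t)$. Differentiating in $t$ and using $(\Delta(t)\varphi_{\lambda}'(t))'=-(\lambda^2+\rho^2)\Delta(t)\varphi_{\lambda}(t)$ from \eqref{eq1.3}, the common constant factor cancels on both sides and, after collecting the powers of $\sh t$ and $\ch t$, \eqref{eq2.12} remains. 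The hypergeometric route is equally short: with $z=-\sh^2 t$ the passage from $(\alpha,\beta)$ to $(\alpha+1,\beta+1)$ shifts $\rho=\alpha+\beta+1$ to $\rho+2$, so that $\varphi_{\lambda}^{(\alpha+1,\beta+1)}(t)=F\bigl(\tfrac{\rho+i\lambda}{2}+1,\tfrac{\rho-i\lambda}{2}+1;\alpha+2;z\bigr)$; then \eqref{eq2.12} is obtained from the change of variables $z=-\sh^2 t$, $1-z=\ch^2 t$ in the standard identity
\[
\frac{d}{dz}\bigl[z^{c-1}(1-z)^{a+b-c}F(a,b;c;z)\bigr]=(c-1)z^{c-2}(1-z)^{a+b-c-1}F(a-1,b-1;c-1;z),
\]
taken with $a=\tfrac{\rho+i\lambda}{2}+1$, $b=\tfrac{\rho-i\lambda}{2}+1$, $c=\alpha+2$, so that $a+b-c=\beta+1$.

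Then I would deduce \eqref{eq2.11} from \eqref{eq2.12}. Expanding the derivative on the left of \eqref{eq2.12} by the product rule and dividing through by $(\sh t)^{2\alpha+1}(\ch t)^{2\beta+1}$ turns \eqref{eq2.12} into
\[
2(\alpha+1)\ch^2 t\,\varphi_{\lambda}^{(\alpha+1,\beta+1)}(t)+2(\beta+1)\sh^2 t\,\varphi_{\lambda}^{(\alpha+1,\beta+1)}(t)+\sh t\,\ch t\,(\varphi_{\lambda}^{(\alpha+1,\beta+1)}(t))_t'=2(\alpha+1)\varphi_{\lambda}^{(\alpha,\beta)}(t).
\]
Applying \eqref{eq2.9} with $(\alpha,\beta)$ replaced by $(\alpha+1,\beta+1)$, so that $\rho$ is replaced by $\alpha+\beta+3$ and $\alpha+1$ by $\alpha+2$, rewrites $(\varphi_{\lambda}^{(\alpha+1,\beta+1)}(t))_t'$ through $\varphi_{\lambda}^{(\alpha+2,\beta+2)}(t)$. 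Substituting this, dividing by $2(\alpha+1)$, using $\ch^2 t+\frac{\beta+1}{\alpha+1}\sh^2 t=\frac{(\alpha+1)\ch^2 t+(\beta+1)\sh^2 t}{\alpha+1}$, and moving the $\varphi_{\lambda}^{(\alpha+2,\beta+2)}$-term to the other side yields exactly \eqref{eq2.11}.

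The argument is routine and I do not foresee a genuine obstacle. The only points requiring care are the correct identification of the hypergeometric parameter shift $\rho\mapsto\rho+2$ under $(\alpha,\beta)\mapsto(\alpha+1,\beta+1)$, equivalently the correct reading of $\varphi_{\lambda}^{(\alpha+1,\beta+1)}$ and $\varphi_{\lambda}^{(\alpha+2,\beta+2)}$ inside \eqref{eq2.9}, and the bookkeeping of the numerical constants.
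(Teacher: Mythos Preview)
Your proposal is correct and uses the same ingredients as the paper, namely \eqref{eq1.3} and \eqref{eq2.9}; the only difference is the order in which the two formulas are obtained. The paper first expands \eqref{eq1.3} as $\varphi_{\lambda}''+\frac{\Delta'}{\Delta}\varphi_{\lambda}'+(\lambda^2+\rho^2)\varphi_{\lambda}=0$ and eliminates both derivatives through \eqref{eq2.9} (applied once more with shifted parameters for $\varphi_{\lambda}''$) to get \eqref{eq2.11}, and then deduces \eqref{eq2.12} from \eqref{eq2.9} and \eqref{eq2.11}; you instead substitute \eqref{eq2.9} into $(\Delta\varphi_{\lambda}')'=-(\lambda^2+\rho^2)\Delta\varphi_{\lambda}$ to obtain \eqref{eq2.12} directly, and then expand \eqref{eq2.12} and apply the shifted \eqref{eq2.9} to reach \eqref{eq2.11}. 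The two routes are equivalent rearrangements of the same computation, and your additional hypergeometric derivation of \eqref{eq2.12} via the contiguous relation is a pleasant extra not in the paper.
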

\begin{proof} Indeed, \eqref{eq2.11} and \eqref{eq2.12} are easily derived from
\eqref{eq1.3} and \eqref{eq2.9}. To prove \eqref{eq2.11}, we rewrite  \eqref{eq1.3} as
\[
\Delta(t)\varphi_{\lambda}''(t)+\Delta'(t)\varphi_{\lambda}'(t)+(\lambda^2+\rho^2)\Delta(t)\varphi_{\lambda}(t)=0,
\]
and then we replace the first and second derivatives  by the Jacobi functions using \eqref{eq2.9}.
To show \eqref{eq2.12}, we use \eqref{eq2.9} and \eqref{eq2.11}.
\end{proof}

 Many properties (e.g., inequality \eqref{eq2.1}) of the Jacobi function follow from the Mehler representation
\begin{equation}\label{eq2.13}
\varphi_{\lambda}(t)=\frac{c_{\alpha}}{\Delta(t)}
\int_{0}^{t}A_{\alpha,\beta}(s,t)\cos{}(\lambda s)\,ds,\quad
A_{\alpha,\beta}(s,t)\ge 0,
\end{equation}
where $c_{\alpha}=\frac{\Gamma(\alpha+1)}{\Gamma(1/2)\,\Gamma(\alpha+1/2)}$ and
\begin{multline*}
A_{\alpha,\beta}(s,t)=2^{\alpha+2\beta+5/2}\sh{}(2t)\ch^{\beta-\alpha}t
\bigl(\ch{}(2t)-\ch{}(2s)\bigr)^{\alpha-1/2}
\\
{}\times F\Bigl(\alpha+\beta,\alpha-\beta;\alpha+\frac{1}{2};\frac{\ch t-\ch s}{2\ch
t}\Bigr).
\end{multline*}
We will need some properties of
the following functions
\begin{equation}\label{eq2.14}
\begin{gathered}
\eta_{\varepsilon}(\lambda)=\psi_{\lambda}(\varepsilon)=\frac{\varphi_{\lambda}(\varepsilon)}{\varphi_0(\varepsilon)},\quad
\varepsilon>0,\quad \lambda\ge 0,\\
\eta_{m-1,\varepsilon}(\lambda)=(-1)^{m-1}\Bigl(\eta_{\varepsilon}(\lambda)-\sum_{k=0}^{m-2}
\frac{\eta_{\varepsilon}^{(2k)}(0)}{(2k)!}\,\lambda^{2k}\Bigr),\quad m\ge 2,\\
\rho_{m-1,\varepsilon}(\lambda)=\frac{(2m-2)!\,
\eta_{m-1,\varepsilon}(\lambda)}{(-1)^{m-1}\eta_{\varepsilon}^{(2m-2)}(0)}.
\end{gathered}
\end{equation}


\begin{lem}\label{lem-2}
For any $\varepsilon>0$, $m\ge 2$, $\lambda\in \mathbb{R}_{+}$,
\[
\eta_{m-1,\varepsilon}(\lambda)\ge 0,\quad (-1)^{m-1}\eta_{\varepsilon}^{(2m-2)}(0)>0,
\]
\[
\rho_{m-1,\varepsilon}(\lambda)\ge 0,
\quad \lim\limits_{\varepsilon\to 0}\rho_{m-1,\varepsilon}(\lambda)=\lambda^{2m-2}.
\]
\end{lem}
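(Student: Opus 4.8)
The plan is to reduce every assertion to an elementary fact about the cosine function by means of the Mehler representation \eqref{eq2.13}. Setting
\[
d\mu_\varepsilon(s)=\frac{c_\alpha}{\Delta(\varepsilon)\varphi_0(\varepsilon)}\,A_{\alpha,\beta}(s,\varepsilon)\,ds,
\]
formula \eqref{eq2.13} (evaluated at $\lambda=0$ for the normalization) together with \eqref{eq2.1} shows that $\mu_\varepsilon$ is a probability measure on $[0,\varepsilon]$, absolutely continuous with respect to Lebesgue measure, and that
\[
\eta_\varepsilon(\lambda)=\psi_\lambda(\varepsilon)=\int_0^\varepsilon\cos(\lambda s)\,d\mu_\varepsilon(s).
\]
Differentiating under the integral sign --- legitimate since $\mu_\varepsilon$ is a finite measure on a compact interval --- gives $\eta_\varepsilon^{(2k)}(0)=(-1)^k\int_0^\varepsilon s^{2k}\,d\mu_\varepsilon(s)$, so that
\[
(-1)^{m-1}\eta_\varepsilon^{(2m-2)}(0)=\int_0^\varepsilon s^{2m-2}\,d\mu_\varepsilon(s)>0,
\]
the strict inequality holding because an absolutely continuous probability measure charges a subset of $(0,\varepsilon)$ of positive Lebesgue measure. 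This settles the second assertion and shows that $\rho_{m-1,\varepsilon}$ is well defined.

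Next I would introduce the even entire function
\[
\Phi_m(x)=\sum_{j=0}^\infty\frac{(-1)^j x^{2j+2m-2}}{(2j+2m-2)!},\qquad m\ge2,
\]
which is precisely the signed tail of the Taylor series of the cosine: reindexing by $j=k-(m-1)$ yields $\cos x-\sum_{k=0}^{m-2}\frac{(-1)^k x^{2k}}{(2k)!}=(-1)^{m-1}\Phi_m(x)$. Substituting $x=\lambda s$, integrating against $d\mu_\varepsilon$, and using the moment formula of the previous paragraph, the definition \eqref{eq2.14} of $\eta_{m-1,\varepsilon}$ becomes
\[
\eta_{m-1,\varepsilon}(\lambda)=\int_0^\varepsilon\Phi_m(\lambda s)\,d\mu_\varepsilon(s).
\]
Hence the first and third assertions follow at once from the single inequality $\Phi_m\ge0$ on $\mathbb{R}$.

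The inequality $\Phi_m\ge0$ I would prove by induction on $m$. For $m=2$ one has $\Phi_2(x)=1-\cos x=2\sin^2(x/2)\ge0$. For $m\ge3$, $\Phi_m$ is even with $\Phi_m(0)=\Phi_m'(0)=0$, and termwise differentiation gives $\Phi_m''=\Phi_{m-1}$; therefore $\Phi_m(x)=\int_0^x(x-u)\Phi_{m-1}(u)\,du\ge0$ for $x\ge0$ by the inductive hypothesis, and hence for all real $x$ by evenness. Consequently $\eta_{m-1,\varepsilon}(\lambda)\ge0$ and, dividing by the positive quantity above, $\rho_{m-1,\varepsilon}(\lambda)\ge0$. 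For the limiting relation, write $\Phi_m(x)=\frac{x^{2m-2}}{(2m-2)!}+r_m(x)$; a crude termwise estimate of the series gives $|r_m(x)|\le|x|^{2m}$ for $|x|\le1$, so once $\varepsilon$ is small enough that $\lambda\varepsilon\le1$ one has $|r_m(\lambda s)|\le(\lambda\varepsilon)^2\lambda^{2m-2}s^{2m-2}$ on $[0,\varepsilon]$, whence
\[
\bigl|\rho_{m-1,\varepsilon}(\lambda)-\lambda^{2m-2}\bigr|
=\frac{(2m-2)!\,\bigl|\int_0^\varepsilon r_m(\lambda s)\,d\mu_\varepsilon(s)\bigr|}
{\int_0^\varepsilon s^{2m-2}\,d\mu_\varepsilon(s)}
\le(2m-2)!\,\lambda^{2m}\varepsilon^2\longrightarrow0\quad(\varepsilon\to0).
\]

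The argument is essentially routine once the initial reduction is in place: the only genuine idea is that \eqref{eq2.13} expresses $\psi_\lambda(\varepsilon)$ as the cosine transform of a probability measure, after which the positivity of the cosine tail $\Phi_m$ --- itself a short induction based on $\Phi_m''=\Phi_{m-1}$ --- does all the work. The one spot needing a little care is the uniformity in $s\in[0,\varepsilon]$ of the remainder bound in the last display, which is handled by the elementary inequality $s^{2m}\le\varepsilon^2 s^{2m-2}$.
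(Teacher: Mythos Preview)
Your proof is correct. For the three positivity assertions you follow the same route as the paper: both arguments rely on the Mehler representation \eqref{eq2.13} to reduce matters to the elementary fact that $(-1)^{m-1}\bigl(\cos x-\sum_{k=0}^{m-2}\frac{(-1)^k x^{2k}}{(2k)!}\bigr)\ge0$, though you additionally supply a clean inductive proof of this via $\Phi_m''=\Phi_{m-1}$ where the paper simply quotes it.

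For the limit $\rho_{m-1,\varepsilon}(\lambda)\to\lambda^{2m-2}$ the approaches diverge. The paper abandons the Mehler integral at this point and instead invokes the Hadamard product \eqref{eq2.2}, expressing $(-1)^k\eta_\varepsilon^{(2k)}(0)$ in terms of elementary symmetric functions of the $\lambda_i^{-2}(\varepsilon)$, bounding the ratio $\bigl|\eta_\varepsilon^{(2k)}(0)/\eta_\varepsilon^{(2m-2)}(0)\bigr|$ by $|\eta_\varepsilon''(0)|^{k-m+1}$, summing the tail, and finally establishing $|\eta_\varepsilon''(0)|\to0$ from the monotonicity of the zeros $\lambda_k(\varepsilon)$ together with the asymptotic $\lambda_k(1)\asymp k$ drawn from \eqref{eq2.3}. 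Your argument stays inside the Mehler representation throughout: the pointwise remainder bound $|r_m(x)|\le|x|^{2m}$ for $|x|\le1$, combined with the moment inequality $s^{2m}\le\varepsilon^2 s^{2m-2}$ on $[0,\varepsilon]$, makes the $(2m-2)$-th moment in the denominator cancel exactly and yields the explicit rate $(2m-2)!\,\lambda^{2m}\varepsilon^2$. Your route is more elementary---it needs neither the infinite-product formula nor the asymptotics of the zeros---and produces a quantitative bound; the paper's route, while longer, makes the connection to the spectral data $\{\lambda_k(\varepsilon)\}$ explicit.
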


\begin{proof} Using the inequality
\[
(-1)^{m-1}\Bigl(\cos \lambda- \sum_{k=0}^{m-2}\frac{(-1)^{k}
\lambda^{2k}}{(2k)!}\Bigr)\ge 0,
\]
and \eqref{eq2.13},  we get
\[
\eta_{m-1,\varepsilon}(\lambda)\ge 0,\quad (-1)^{m-1}\eta_{\varepsilon}^{(2m-2)}(0)=\frac{c_{\alpha}}{\Delta(\varepsilon)\varphi_{0}(\varepsilon)}
\int_{0}^{\varepsilon}A_{\alpha,\beta}(s,\varepsilon)s^{2m-2}\,ds>0.
\]
Hence, $\rho_{m-1,\varepsilon}(\lambda)\ge 0$.
For any $\lambda\in \mathbb{R}_{+}$,
\[
\eta_{\varepsilon}(\lambda)
=\sum_{k=0}^{\infty}\frac{\eta_{\varepsilon}^{(2k)}(0)}{(2k)!}\,\lambda^{2k}.
\]
By differentiating  equality \eqref{eq2.2} in $\lambda$ and  substituting $\lambda=0$, we obtain
\[
(-1)^k\eta_{\varepsilon}^{(2k)}(0)
=2^k\sum_{i_1=1}^{\infty}\frac{1}{\lambda_{i_1}^2(\varepsilon)}
\sum_{i_2\neq i_1}^{\infty} \frac{1}{\lambda_{i_2}^2(\varepsilon)}\dots
\sum_{i_k\neq i_1,\dots,i_{k-1}}^{\infty}\frac{1}{\lambda_{i_k}^2(\varepsilon)}.
\]
Hence,
\[
|\eta_{\varepsilon}''(0)|=2\sum_{i=1}^{\infty}\frac{1}{\lambda_{i}^2(\varepsilon)},
\]
and for $k\ge m$
\[
\Bigl|\frac{\eta_{\varepsilon}^{(2k)}(0)}{\eta_{\varepsilon}^{(2m-2)}(0)}\Bigr|\leq |\eta_{\varepsilon}''(0)|^{k-m+1}.
\]
Therefore,
\begin{align*}
\frac{|\rho_{m-1,\varepsilon}(\lambda)-\lambda^{2m-2}|}{(2m-2)!}&=\Bigl|\frac{
\eta_{m-1,\varepsilon}(\lambda)}{\eta_{\varepsilon}^{(2m-2)}(0)}-\frac{\lambda^{2m-2}}{(2m-2)!}\Bigr|=
\Bigl|\frac{\eta_{m,\varepsilon}(\lambda)}{\eta_{\varepsilon}^{(2m-2)}(0)}\Bigr|\\&\le
\sum_{k=m}^{\infty}\Bigl|\frac{\eta_{\varepsilon}^{(2k)}(0)}{\eta_{\varepsilon}^{(2m-2)}(0)}
\Bigr|\frac{\lambda^{2k}}{(2k)!}\le |\eta_{\varepsilon}''(0)|\sum_{k=m}^\infty|
\eta_{\varepsilon}''(0)|^{k-m}\frac{\lambda^{2k}}{(2k)!}.
\end{align*}
It remains to show that
\begin{equation*}
\lim\limits_{\varepsilon\to 0}|\eta_{\varepsilon}''(0))|=0.
\end{equation*}
Zeros $\lambda_{k}(\varepsilon)$ monotonically  decrease  on $\varepsilon$ and, for any $k$,
$\lim\limits_{\varepsilon\to 0}\lambda_{k}(\varepsilon)=\infty$. In view of  \eqref{eq2.3},
we have 
$\lambda_{k}(1)\asymp k$
as $k\to\infty$. Finally, the result follows from 
\[
|\eta_{\varepsilon}''(0))|\le \sum_{k=1}^N\frac{1}{\lambda_{k}^2(\varepsilon)}+
\sum_{k=N+1}^{\infty}\frac{1}{\lambda_{k}^2(1)}\lesssim \sum_{k=1}^N\frac{1}{\lambda_{k}^2(\varepsilon)}+\frac{1}{N}.
\]
\end{proof}

\subsection*{Jacobi transforms, translation, and positive definiteness}
As usual, if $X$ is a manifold  with the positive
measure $\rho$, then by $L^{p}(X,d\rho)$, $p\ge 1$, we denote the
Lebesgue space with the finite norm
$\|f\|_{p,d\rho}=\bigl(\int_{X}|f|^p\,d\rho\bigr)^{1/p}$. For $p=\infty$,
$C_b(X)$ is the space of continuous bounded functions with norm
$\|f\|_{\infty}=\sup_{X}|f|$. Let $\mathrm{supp}\,f$ be the support of a
function~$f$.


{Let $t,\lambda\in \mathbb{R}_{+}$, $d\mu(t)=\Delta(t)\,dt$ and $d\sigma(\lambda)$ be the spectral measure
\eqref{def-sigma1}.
Then}
 $L^{2}(\mathbb{R}_{+}, d\mu)$ and $L^{2}(\mathbb{R}_{+}, d\sigma)$ are
 Hilbert spaces with the inner products
\[
(g, G)_{\mu}=\int_{0}^{\infty}g(t)\overline{G(t)}\,d\mu(t),\quad (f,
F)_{\sigma}=\int_{0}^{\infty}f(\lambda)\overline{F(\lambda)}\,d\sigma(\lambda).
\]

The main concepts of harmonic analysis in $L^{2}(\mathbb{R}_{+}, d\mu)$ and $L^{2}(\mathbb{R}_{+}, d\sigma)$
  are the direct and inverse Jacobi transforms, namely,
\[
\mathcal{J}g(\lambda)=\mathcal{J}^{(\alpha,\beta)}g(\lambda)=\int_{0}^{\infty}g(t)\varphi_{\lambda}(t)\,d\mu(t)
\]
and
\[
\mathcal{J}^{-1}f(t)=(\mathcal{J}^{(\alpha,\beta)})^{-1}f(t)=\int_{0}^{\infty}f(\lambda)\varphi_{\lambda}(t)\,d\sigma(\lambda).
\]
We recall a few basic facts.
If $g\in L^{2}(\mathbb{R}_{+}, d\mu)$, $f\in L^{2}(\mathbb{R}_{+}, d\sigma)$, then
$\mathcal{J}g\in L^{2}(\mathbb{R}_{+}, d\sigma)$, $\mathcal{J}^{-1}f\in
L^{2}(\mathbb{R}_{+}, d\mu)$ and $g(t)=\mathcal{J}^{-1}(\mathcal{J}g)(t)$,
$f(\lambda)=\mathcal{J}(\mathcal{J}^{-1}f)(\lambda)$ in the mean square sense
and, moreover, the Parseval relations hold.

In addition, if $g\in L^{1}(\mathbb{R}_{+}, d\mu)$, then $\mathcal{J}g\in C_b(\mathbb{R}_{+})$
and $\|\mathcal{J}g\|_{\infty}\leq \|g\|_{1,d\mu}$. If $f\in
L^{1}(\mathbb{R}_{+}, d\sigma)$, then $\mathcal{J}^{-1}f\in C_b(\mathbb{R}_{+})$
and $\|\mathcal{J}^{-1}f\|_{\infty}\leq \|f\|_{1,d\sigma}$.

Furthermore, assuming $g\in L^{1}(\mathbb{R}_{+}, d\mu)\cap C_b(\mathbb{R}_{+})$, $\mathcal{J}g\in
L^{1}(\mathbb{R}_{+}, d\sigma)$, one has,  for any $t\in\mathbb{R}_{+}$,
\[
g(t)=\int_{0}^{\infty}\mathcal{J}g(\lambda)\varphi_{\lambda}(t)\,d\sigma(\lambda).
\]
Similarly, assuming $f\in L^{1}(\mathbb{R}_{+}, d\sigma)\cap C_b(\mathbb{R}_{+})$,
$\mathcal{J}^{-1}f\in L^{1}(\mathbb{R}_{+}, d\mu)$, one has, for any
$\lambda\in\mathbb{R}_{+}$,
\[
f(\lambda)=\int_{0}^{\infty}\mathcal{J}^{-1}f(t)\varphi_{\lambda}(t)\,d\mu(t).
\]

Let $\mathcal{B}_1^{\tau},$ $\tau>0$, be the Bernstein class of even entire functions from $\mathcal{E}^{\tau}$, whose
restrictions to $\mathbb{R}_{+}$ belong to $L^{1}(\mathbb{R}_{+}, d\sigma)$. For
functions from the class $\mathcal{B}_1^{\tau}$, the following Paley--Wiener theorem is
valid.


\begin{lem}[\cite{Koor75,GI19}]\label{lem-3}
A function $f$ belongs to $\mathcal{B}_1^{\tau}$ if and only if
\[
f\in L^{1}(\mathbb{R}_{+}, d\sigma)\cap C_b(\mathbb{R}_{+})\quad \text{and}\quad
\mathrm{supp}\,\mathcal{J}^{-1}f\subset[0,\tau].
\]
Moreover, there holds
\[
f(\lambda)=\int_{0}^{\tau}\mathcal{J}^{-1}f(t)\varphi_{\lambda}(t)\,d\mu(t),\quad \lambda\in \mathbb{R}_{+}.
\]
\end{lem}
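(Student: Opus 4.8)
The plan is to prove the two implications of the equivalence together with the reproducing formula, reducing the nontrivial direction to the classical Paley--Wiener theorem for the cosine transform by factoring the Jacobi transform through the Mehler representation \eqref{eq2.13}. I first record the pointwise bound that drives every type estimate: for $t\ge0$ and $\lambda\in\mathbb{C}$, the representation \eqref{eq2.13}, the nonnegativity of $A_{\alpha,\beta}$, and $|\cos\lambda s|\le\ch(s|\mathrm{Im}\,\lambda|)\le e^{t|\mathrm{Im}\,\lambda|}$ for $0\le s\le t$ give
\[
|\varphi_{\lambda}(t)|\le\frac{c_{\alpha}}{\Delta(t)}\int_{0}^{t}A_{\alpha,\beta}(s,t)\,ds\cdot e^{t|\mathrm{Im}\,\lambda|}=\varphi_{0}(t)\,e^{t|\mathrm{Im}\,\lambda|}\le e^{t|\mathrm{Im}\,\lambda|},
\]
the last inequality using $\varphi_{0}(t)\le1$ from \eqref{eq2.1}.

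For the ``if'' part and the ``Moreover'' formula, suppose $f\in L^{1}(\mathbb{R}_{+},d\sigma)\cap C_b(\mathbb{R}_{+})$ with $g:=\mathcal{J}^{-1}f$ supported in $[0,\tau]$. Since $g\in C_b(\mathbb{R}_{+})$ and $d\mu$ is finite on $[0,\tau]$, we have $g\in L^{1}(\mathbb{R}_{+},d\mu)$, so the inversion formula recalled before the lemma applies and yields $f(\lambda)=\int_{0}^{\tau}g(t)\varphi_{\lambda}(t)\,d\mu(t)$ for $\lambda\in\mathbb{R}_{+}$, which is the asserted identity. The right-hand side extends to an entire even function of $\lambda$: $\varphi_{\lambda}(t)$ is entire and even in $\lambda$, and the displayed bound justifies differentiation under the integral sign and gives $|f(\lambda)|\le e^{\tau|\mathrm{Im}\,\lambda|}\|g\|_{1,d\mu}$. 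Hence $f\in\mathcal{E}^{\tau}$, and since $f|_{\mathbb{R}_{+}}\in L^{1}(\mathbb{R}_{+},d\sigma)$ by hypothesis, $f\in\mathcal{B}_1^{\tau}$.

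For the ``only if'' part, let $f\in\mathcal{B}_1^{\tau}$. Because $\alpha\ge-1/2$, estimate \eqref{eq2.4} gives $s(\lambda)\gtrsim1$ for $\lambda\ge1$, so $f|_{\mathbb{R}_{+}}\in L^{1}(\mathbb{R}_{+},d\sigma)$ together with the continuity of $f$ on $[0,1]$ forces $f\in L^{1}(\mathbb{R}_{+},d\lambda)$; an entire function of exponential type lying in $L^{1}(\mathbb{R})$ is bounded (Plancherel--P\'olya), whence $f\in C_b(\mathbb{R}_{+})$ and $g:=\mathcal{J}^{-1}f\in C_b(\mathbb{R}_{+})$. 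It remains to show $\mathrm{supp}\,g\subset[0,\tau]$. Inserting \eqref{eq2.13} into $f=\mathcal{J}g$ and interchanging the order of integration factors the Jacobi transform through the cosine transform,
\[
f(\lambda)=\int_{0}^{\infty}\cos(\lambda s)\,\mathcal{A}g(s)\,ds,\qquad \mathcal{A}g(s):=c_{\alpha}\int_{s}^{\infty}g(t)A_{\alpha,\beta}(s,t)\,dt,
\]
where $\mathcal{A}$ is the Abel transform. Since $f$ is entire of exponential type $\le\tau$ and integrable, the classical Paley--Wiener theorem for the cosine transform yields $\mathrm{supp}\,\mathcal{A}g\subset[0,\tau]$.

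The main obstacle is twofold, and both parts are genuinely harder than in the Euclidean or Hankel setting because here the weight $\Delta(t)$ and the Abel kernel $A_{\alpha,\beta}(s,t)$ grow exponentially in $t$ (cf.\ the remark following Problem~D). First, this growth means the identity $f=\mathcal{J}g$ and the convergence and Fubini interchange defining $\mathcal{A}g$ must be justified through sharp decay of $g=\mathcal{J}^{-1}f$, which is precisely where entireness of exponential type feeds back; this is where the analysis departs from the polynomial-weight case. Second, one must transfer the support bound through $\mathcal{A}$, i.e.\ deduce $\mathrm{supp}\,g\subset[0,\tau]$ from $\mathrm{supp}\,\mathcal{A}g\subset[0,\tau]$. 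This rests on the explicit inversion of $\mathcal{A}$, whose kernel is up to smooth factors the Riemann--Liouville kernel $(\ch 2t-\ch 2s)^{\alpha-1/2}$; its inverse is a fractional differentiation operator in the variable $\ch 2t$ that maps functions vanishing for $s>\tau$ to functions vanishing for $t>\tau$. Both ingredients are supplied by Koornwinder's Abel inversion \cite{Koor75} (see also \cite{FlenKoor73,FlenKoor79}), and invoking them, together with \cite{GI19}, completes the proof.
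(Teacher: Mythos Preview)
The paper does not give its own proof of this lemma: it is stated with a citation to \cite{Koor75,GI19} and used as a black box. So there is no in-paper argument to compare against; the only question is whether your sketch is a faithful outline of what those references actually do.

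It is. The ``if'' direction you wrote is complete and correct as stated. For the ``only if'' direction, your strategy---factor $\mathcal{J}$ through the Abel transform $\mathcal{A}$ via the Mehler representation~\eqref{eq2.13}, apply the classical cosine Paley--Wiener theorem to get $\mathrm{supp}\,\mathcal{A}g\subset[0,\tau]$, then pull the support bound back through the explicit fractional-integration inverse of $\mathcal{A}$---is exactly Koornwinder's argument in \cite{Koor75}. You have also correctly flagged the one genuine difficulty that distinguishes the Jacobi case from the Hankel/Euclidean case: the exponential growth of $\Delta(t)$ and of the Abel kernel means that the identity $f=\mathcal{J}g$ and the Fubini step defining $\mathcal{A}g$ are \emph{not} automatic from $g\in C_b(\mathbb{R}_{+})$ alone (there is an apparent circularity, since $g\in L^{1}(d\mu)$ would follow immediately from the compact support you are trying to prove). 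Koornwinder resolves this not by first proving decay of $g$, but by working on the spectral side: he writes $\mathcal{J}^{-1}$ itself as a composition of the inverse cosine transform with the inverse Abel transform (a Weyl fractional derivative in $\ch 2t$), and applies these to $f$ directly; the classical Paley--Wiener theorem then controls the support of the intermediate cosine inverse, and the fractional derivative preserves that support. Your last paragraph gestures at this but phrases it as decay of $g$ feeding back, which is not quite how the circularity is broken. Since you ultimately invoke \cite{Koor75} for precisely this step, the proof is acceptable as a sketch, and is in substance the same as what the paper cites.
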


Let us now  discuss  the generalized translation operator and convolution.
In view of \eqref{eq2.1}, the generalized translation operator in
$L^{2}(\mathbb{R}_{+}, d\mu)$ is defined by \cite[Sect.~4]{FlenKoor73} 
\[
T^tg(x)=\int_{0}^{\infty}\varphi_{\lambda}(t)
\varphi_{\lambda}(x)\mathcal{J}g(\lambda)\,d\sigma(\lambda),\quad
t,x\in \mathbb{R}_{+}.
\]
If $\alpha\ge\beta\ge-1/2$, $\alpha>-1/2$, the following integral representation holds:
\begin{equation}\label{eq2.17}
T^tg(x)=\int_{|t-x|}^{t+x}g(u)K(t,x,u)\,d\mu(u),
\end{equation}
where the kernel $K$ is nonnegative and symmetric. 
 Note that for $\alpha=\beta=-1/2$, we arrive at
$T^tg(x)=(g(t+x)+g(|t-x|))/2$.

Using representation \eqref{eq2.17}, we can extend the generalized translation
operator to the spaces $L^{p}(\mathbb{R}_{+}, d\mu)$, $1\leq p\leq\infty$, and,
for any $t\in\mathbb{R}_{+}$, we have $\|T^t\|_{p\to p}=1$
\cite[Lemma~5.2]{FlenKoor73}.

The operator $T^t$ possesses the following properties:

\smallbreak
(1) \ $\text{if}\ g(x)\geq 0,\ \text{then}\ T^tg(x)\geq 0$;

\smallbreak
(2) \ $T^t\varphi_{\lambda}(x)=\varphi_{\lambda}(t)\varphi_{\lambda}(x), \
\mathcal{J}(T^tg)(\lambda)=\varphi_{\lambda}(t)\mathcal{J}g(\lambda)$;

\smallbreak
(3) \ $T^tg(x)=T^xg(t), \ T^t1=1$;

\smallbreak
(4) \ $\text{if}\ g \in L^{1}(\mathbb{R}^d_{+}, d\mu),\ \text{then}\
\int_{0}^{\infty}T^tg(x)\,d\mu(x)=\int_{0}^{\infty}g(x)\,d\mu(x)$;

\smallbreak
(5) \ $\text{if}\ \mathrm{supp}\,g\subset[0, \delta],\ \text{then} \
\mathrm{supp}\,T^tg\subset[0, \delta+t]$.

\smallbreak
Using the generalized translation operator $T^t$, we can define the
convolution 
and
positive-definite functions.
Following \cite{FlenKoor73}, we set
\[
(g\ast G)_{\mu}(x)=\int_{0}^{\infty}T^tg(x)G(t)\,d\mu(t).
\]


\begin{lem}[{\cite[Sect.~5]{FlenKoor73}}]\label{lem-5}
 If $g, G\in L^{1}(\mathbb{R}_{+}, d\mu)$, then $\mathcal{J}(g\ast
G)_{\mu}=\mathcal{J}g\,\mathcal{J}G$.
Moreover, if $\mathrm{supp}\,g\subset[0, \delta]$, $\mathrm{supp}\,G\subset[0, \tau]$,
then $\mathrm{supp}\,(g\ast G)_{\mu}\subset[0, \delta+\tau]$.
\end{lem}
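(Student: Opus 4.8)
The plan is to prove the two assertions separately, relying on the listed properties of the generalized translation operator $T^t$ together with Fubini's theorem. For the multiplicativity $\mathcal{J}(g\ast G)_{\mu}=\mathcal{J}g\,\mathcal{J}G$, I would start from the definitions of the convolution and the direct transform and write
\[
\mathcal{J}(g\ast G)_{\mu}(\lambda)=\int_{0}^{\infty}\Bigl(\int_{0}^{\infty}T^tg(x)G(t)\,d\mu(t)\Bigr)\varphi_{\lambda}(x)\,d\mu(x).
\]
Interchanging the order of integration and using property (2), namely $\mathcal{J}(T^tg)(\lambda)=\varphi_{\lambda}(t)\mathcal{J}g(\lambda)$, the inner $x$-integral becomes $\varphi_{\lambda}(t)\mathcal{J}g(\lambda)$. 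Pulling the $t$-independent factor $\mathcal{J}g(\lambda)$ out of the remaining integral then yields
\[
\mathcal{J}(g\ast G)_{\mu}(\lambda)=\mathcal{J}g(\lambda)\int_{0}^{\infty}\varphi_{\lambda}(t)G(t)\,d\mu(t)=\mathcal{J}g(\lambda)\,\mathcal{J}G(\lambda),
\]
which is the claimed identity.

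The one point requiring care is the legitimacy of the Fubini interchange, and this I would settle by an absolute-integrability estimate. Since $|\varphi_{\lambda}(x)|\le 1$ by \eqref{eq2.1} and $T^t$ is a contraction on $L^{1}(\mathbb{R}_{+},d\mu)$ with $\|T^t\|_{1\to1}=1$ (as recorded after \eqref{eq2.17}), one has
\[
\int_{0}^{\infty}\int_{0}^{\infty}|T^tg(x)|\,|G(t)|\,d\mu(t)\,d\mu(x)\le\int_{0}^{\infty}|G(t)|\,\|T^tg\|_{1,d\mu}\,d\mu(t)\le\|g\|_{1,d\mu}\,\|G\|_{1,d\mu}<\infty.
\]
This bound justifies the interchange and, as a byproduct, shows $(g\ast G)_{\mu}\in L^{1}(\mathbb{R}_{+},d\mu)$, so that its Jacobi transform is well defined. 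The same contraction estimate lets me apply property (2) under the integral sign for general $g,G\in L^1(\mathbb{R}_{+},d\mu)$, with no need for a density or approximation argument.

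For the support statement I would argue pointwise. Assume $\mathrm{supp}\,g\subset[0,\delta]$ and $\mathrm{supp}\,G\subset[0,\tau]$. By property (5), for each fixed $t$ the function $x\mapsto T^tg(x)$ is supported in $[0,\delta+t]$, so $T^tg(x)=0$ whenever $x>\delta+t$. In the convolution integral, restricted to the support of $G$,
\[
(g\ast G)_{\mu}(x)=\int_{0}^{\tau}T^tg(x)G(t)\,d\mu(t),
\]
the variable $t$ ranges over $[0,\tau]$, on which $\delta+t\le\delta+\tau$. Hence if $x>\delta+\tau$, then $x>\delta+t$ for every $t\in[0,\tau]$, the integrand vanishes identically, and $(g\ast G)_{\mu}(x)=0$. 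This gives $\mathrm{supp}\,(g\ast G)_{\mu}\subset[0,\delta+\tau]$.

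The computation is short, and the main (essentially the only) obstacle is the measure-theoretic bookkeeping: verifying that the Fubini interchange and the in-integral use of property (2) are valid for arbitrary $L^1$ data rather than merely on a convenient dense subclass. The uniform contraction bound $\|T^t\|_{1\to1}=1$ is exactly what resolves this, so once that estimate is invoked both parts follow directly.
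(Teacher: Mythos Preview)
Your argument is correct. The paper does not supply its own proof of this lemma; it simply cites \cite[Sect.~5]{FlenKoor73} and moves on. What you have written is exactly the standard derivation, and it works here because the paper has already recorded all the ingredients you need: the contraction bound $\|T^{t}\|_{1\to1}=1$, the positivity of the kernel in \eqref{eq2.17}, and properties (2) and (5) of $T^{t}$. Your Fubini justification is clean (the bound $|\varphi_{\lambda}|\le 1$ from \eqref{eq2.1} handles the extra factor), and the support argument via property~(5) is immediate. There is nothing to compare against and no gap to flag.
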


An even  continuous function $g$ is called positive definite with respect to
Jacobi transform~$\mathcal{J}$ if for any $N$
\[
\sum_{i,j=1}^Nc_i\overline{c_j}\,T^{x_i}g(x_j)\ge 0,\quad
\forall\,c_1,\dots,c_N\in\mathbb{C},\quad
\forall\,x_1,\dots,x_N\in\mathbb{R}_{+},
\]
or, equivalently, the matrix  $(T^{x_i}g(x_j))_{i,j=1}^{N}$ is positive semidefinite.
If a continuous function~$g$ has the representation
\[
g(x)=\int_{0}^{\infty}\varphi_{\lambda}(x)\,d\nu(\lambda),
\]
where $\nu$ is a non-decreasing function of bounded variation, then $g$ is positive definite.
Indeed, using the property (2) for the operator $T^{t}$, we obtain
\begin{align*}
\sum_{i,j=1}^Nc_i\overline{c_j}\,T^{x_i}g(x_j)&=\int_{0}^{\infty}\sum_{i,j=1}^Nc_i\overline{c_j}
\,T^{x_i}\varphi_{\lambda}(x_j)\,d\nu(\lambda)
\\&=\int_{0}^{\infty}\sum_{i,j=1}^Nc_i\overline{c_j}
\,\varphi_{\lambda}(x_i)\varphi_{\lambda}(x_j)\,d\nu(\lambda)=\int_{0}^{\infty}
\Bigl|\sum_{i=1}^Nc_i \,\varphi_{\lambda}(x_i)\Bigr|^2\,d\nu(\lambda)\ge 0.
\end{align*}
If $g\in L^{1}(\mathbb{R}_{+}, d\mu)$, then a sufficient condition for positive definiteness of $g$ is
$\mathcal{J}g(\lambda)\ge 0$.

We can also define the generalized translation operator in
$L^{2}(\mathbb{R}_{+}, d\sigma)$ by
\[
S^{\eta}f(\lambda)=\int_{0}^{\infty}\varphi_{\eta}(t)
\varphi_{\lambda}(t)\mathcal{J}^{-1}f(t)\,d\mu(t),\quad
\eta,\lambda\in \mathbb{R}_{+}.
\]

Then, for $\alpha\ge\beta\ge-1/2$, $\alpha>-1/2$, the following integral representation holds:
\begin{equation}\label{shift-oper}
S^{\eta}f(\lambda)=\int_{0}^{\infty}f(\zeta)L(\eta,\lambda,\zeta)\,d\sigma(\zeta),
\end{equation}
where the kernel
\[
L(\eta,\lambda,\zeta)=\int_{0}^{\infty}\varphi_{\eta}(t)
\varphi_{\lambda}(t)\varphi_{\zeta}(t)\,d\mu(t),\quad
\int_{0}^{\infty}L(\eta,\lambda,\zeta)\,d\sigma(\zeta)=1,
\]
is nonnegative continuous and symmetric, 
\cite{FlenKoor79}. Using \eqref{shift-oper}, we can extend the generalized
translation operator to the spaces $L^{p}(\mathbb{R}_{+}, d\sigma)$, $1\leq
p\leq\infty$, and, for any $\eta\in\mathbb{R}_{+}$, $\|S^\eta\|_{p\to p}=1$
\cite{FlenKoor79}.

One has

\smallbreak
(1) \ $\text{if}\ f(\lambda)\geq 0,\ \text{then}\ S^{\eta}f(\lambda)\geq 0$;

\smallbreak
(2) \ $S^{\eta}\varphi_{\lambda}(t)=\varphi_{\eta}(t)\varphi_{\lambda}(t), \
\mathcal{J}^{-1}( S^{\eta}f)(t)=\varphi_{\eta}(t)\mathcal{J}^{-1}f(t)$;

\smallbreak
(3) \ $S^{\eta}f(\lambda)=S^{\lambda}f(\eta), \ S^{\eta}1=1$;

\smallbreak
(4) \ $\text{if}\ f \in L^{1}(\mathbb{R}^d_{+}, d\sigma),\ \text{then}\
\int_{0}^{\infty}S^{\eta}f(\lambda)\,d\sigma(\lambda)=\int_{0}^{\infty}f(\lambda)\,d\sigma(\lambda)$.

\smallbreak
The function $\zeta\mapsto L(\eta,\lambda,\zeta)$ is analytic for
$|\mathrm{Im}\,\zeta|<\rho$. Hence, the restriction of this function to $\mathbb{R}_{+}$ has no compact
support, in contrast with the function $x\mapsto K(t,s,x)$ in~\eqref{eq2.17}. 

Similarly to above, we define
\[
(f\ast F)_{\sigma}(\lambda)=\int_{0}^{\infty}S^{\eta}f(\lambda)F(\eta)\,d\sigma(\eta).
\]
If $f, F\in L^{1}(\mathbb{R}_{+}, d\sigma)$, then $\mathcal{J}^{-1}(f\ast
F)_{\sigma}=\mathcal{J}^{-1}f\,\mathcal{J}^{-1}F$. 

An even continues function  is called positive definite with respect to
the inverse Jacobi transform $\mathcal{J}^{-1}$ if
\[
\sum_{i,j=1}^Nc_i\overline{c_j}\,S^{\lambda_i}f(\lambda_j)\ge 0,\quad
\forall\,c_1,\dots,c_N\in\mathbb{C},\quad
\forall\,\lambda_1,\dots,\lambda_N\in\mathbb{R}_{+},
\]
or, equivalently, the matrix  $(S^{\lambda_i}f(\lambda_j))_{i,j=1}^{N}$ is positive semidefinite.
If a continuous function~$f$ has the representation
\[
f(\lambda)=\int_{0}^{\infty}\varphi_{\lambda}(t)\,d\nu(t),
\]
where $\nu$ is a non-decreasing function of bounded variation, then $f$ is positive definite.
If $f\in L^{1}(\mathbb{R}_{+}, d\sigma)$, then a sufficient condition for positive definiteness is
$\mathcal{J}^{-1}f(t)\ge 0$.

\subsection*{Gauss quadrature and lemmas on entire functions
}
In what follows, we will need the Gauss quadrature formula on the half-line
for entire functions of exponential type.

\begin{lem}[\cite{GorIva15}]
For an arbitrary function $f\in \mathcal{B}_1^{2\tau}$, the Gauss quadrature formula with
positive weights holds:
\begin{equation}\label{eq2.16}
\int_{0}^{\infty}f(\lambda)\,d\sigma(\lambda)=
\sum_{k=0}^{\infty}\gamma_{k}(\tau)f(\lambda_{k}(\tau)).
\end{equation}
The series in \eqref{eq2.16} converges absolutely.
\end{lem}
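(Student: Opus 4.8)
\emph{Proof plan.} The idea is to transfer everything to the space side by the Paley--Wiener theorem and then to prove \eqref{eq2.16} as a Hermite interpolation identity in $\mathcal{B}_1^{2\tau}$, with nodes at the zeros $\lambda_k(\tau)$ of $\varphi_{\,\cdot\,}(\tau)$; recall from \eqref{eq2.2} that $\lambda\mapsto\varphi_\lambda(\tau)$ is an even entire function of exponential type $\tau$ all of whose zeros $\pm\lambda_k(\tau)$ are real and simple. Fix $f\in\mathcal{B}_1^{2\tau}$ and set $g=\mathcal{J}^{-1}f$. By Lemma~\ref{lem-3}, $g\in L^{1}(\mathbb{R}_{+},d\mu)\cap C_b(\mathbb{R}_{+})$, $\operatorname{supp}g\subseteq[0,2\tau]$, and $\mathcal{J}g=f\in L^{1}(\mathbb{R}_{+},d\sigma)$; the inversion formula at $t=0$ (using $\varphi_\lambda(0)=1$) gives $\int_{0}^{\infty}f\,d\sigma=g(0)$, while $f(\lambda_k(\tau))=\int_{0}^{2\tau}g\,\varphi_{\lambda_k(\tau)}\,d\mu$. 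Hence \eqref{eq2.16} is equivalent to the sampling identity $g(0)=\sum_k\gamma_k(\tau)\int_{0}^{2\tau}g\,\varphi_{\lambda_k(\tau)}\,d\mu$. Normalise the Lagrange-type functions $\ell_k(\lambda)=\varphi_\lambda(\tau)\big/\bigl[(1-\lambda^2/\lambda_k^2(\tau))\,c_k\bigr]$ so that $\ell_k(\lambda_j(\tau))=\delta_{jk}$; by \eqref{eq2.5}, the even entire functions $\ell_k^2$ and $(\lambda^2-\lambda_k^2(\tau))\ell_k^2$ have exponential type $2\tau$ and, since $s(\lambda)\asymp\lambda^{2\alpha+1}$ turns their decay $\lambda^{-2\alpha-5}$, $\lambda^{-2\alpha-3}$ into $\lambda^{-4}$, $\lambda^{-2}$, both lie in $L^{1}(\mathbb{R}_{+},d\sigma)$ and therefore in $\mathcal{B}_1^{2\tau}$. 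Put $\gamma_k(\tau)=\int_{0}^{\infty}\ell_k(\lambda)^2\,d\sigma(\lambda)>0$, the positivity being immediate since $\ell_k^2\ge0$, $\ell_k^2\not\equiv0$ and $\operatorname{supp}\sigma=\mathbb{R}_{+}$.

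The heart of the matter is the interpolation identity
\[
f(\lambda)=\sum_k f(\lambda_k(\tau))\,\ell_k(\lambda)^2+\sum_k\frac{f'(\lambda_k(\tau))}{2\lambda_k(\tau)}\,\bigl(\lambda^2-\lambda_k^2(\tau)\bigr)\ell_k(\lambda)^2 ,\qquad f\in\mathcal{B}_1^{2\tau},
\]
with absolutely convergent series. The right-hand side $I$ is even, of exponential type $2\tau$, and interpolates $f$ together with $f'$ at each node $\pm\lambda_k(\tau)$; a local Nikol'skii inequality together with a Bernstein inequality yields $s(\lambda_k(\tau))\bigl(|f(\lambda_k(\tau))|+\lambda_k^{-1}(\tau)|f'(\lambda_k(\tau))|\bigr)\lesssim\int_{I_k}|f|\,d\sigma$ on disjoint intervals $I_k\ni\lambda_k(\tau)$ of length $\asymp1/\tau$ (the gaps of the $\lambda_k(\tau)$ being bounded above and below by \eqref{eq2.3}), which makes the series converge and gives $I\in L^{1}(\mathbb{R}_{+},d\sigma)$. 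Then $R:=f-I$ is even, of exponential type $2\tau$, lies in $L^{1}(\mathbb{R}_{+},d\sigma)$, and vanishes to order $\ge2$ at every zero of $\varphi_{\,\cdot\,}(\tau)$; hence $R/\varphi_{\,\cdot\,}(\tau)^2$ is entire of exponential type $0$ and of polynomial growth on $\mathbb{R}$ and on $i\mathbb{R}$, so by Phragm\'en--Lindel\"of it is a polynomial, and as $\varphi_\lambda(\tau)^2s(\lambda)\asymp1$ by \eqref{eq2.3}--\eqref{eq2.4} while $R\in L^{1}(\mathbb{R}_{+},d\sigma)$, this polynomial is $0$. Thus $f=I$.

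Integrating the identity against $d\sigma$ term by term, which is allowed by the absolute convergence just established (the same estimate also gives $\sum_k\gamma_k(\tau)|f(\lambda_k(\tau))|\lesssim\|f\|_{1,d\sigma}<\infty$, the claimed absolute convergence of \eqref{eq2.16}), we obtain
\[
\int_{0}^{\infty}f\,d\sigma=\sum_k\gamma_k(\tau)f(\lambda_k(\tau))+\sum_k\frac{f'(\lambda_k(\tau))}{2\lambda_k(\tau)}\,\beta_k,\qquad \beta_k:=\int_{0}^{\infty}\bigl(\lambda^2-\lambda_k^2(\tau)\bigr)\ell_k(\lambda)^2\,d\sigma(\lambda).
\]
So \eqref{eq2.16} follows once $\beta_k=0$ for every $k$, and this is the step I expect to be the main obstacle. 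From $(1-\lambda^2/\lambda_k^2(\tau))c_k\ell_k(\lambda)=\varphi_\lambda(\tau)$ one gets $(\lambda^2-\lambda_k^2(\tau))\ell_k(\lambda)^2=-c_k^{-1}\lambda_k^2(\tau)\,\varphi_\lambda(\tau)\ell_k(\lambda)$, so by Lemma~\ref{lem-3} (applied at $0$) $\beta_k=-c_k^{-1}\lambda_k^2(\tau)\,\mathcal{J}^{-1}\bigl(\varphi_{\,\cdot\,}(\tau)\ell_k\bigr)(0)$. Writing $\ell_k=\mathcal{J}h_k$ with $\operatorname{supp}h_k\subseteq[0,\tau]$ (Paley--Wiener), property~(2) of the generalized translation gives $\varphi_{\,\cdot\,}(\tau)\ell_k=\mathcal{J}(T^\tau h_k)$, hence $\mathcal{J}^{-1}\bigl(\varphi_{\,\cdot\,}(\tau)\ell_k\bigr)(0)=T^\tau h_k(0)$. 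For $\alpha>-1/2$ the integral representation \eqref{eq2.17} shows that $T^\tau h_k(0)=\int_{|\tau-0|}^{\tau+0}h_k(u)K(\tau,0,u)\,d\mu(u)$ is an integral over the single point $u=\tau$ and hence vanishes; for $\alpha=\beta=-1/2$ one uses instead $T^\tau h_k(0)=h_k(\tau)=0$, since $h_k$ is continuous and supported in $[0,\tau]$. Therefore $\beta_k=0$ and \eqref{eq2.16} is proved.

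A remark on where the work lies. The vanishing $\beta_k=0$ is precisely what upgrades the expansion into a \emph{Gauss} quadrature, and it rests on the support structure of the Jacobi translation (in the classical case $\alpha=\beta=-1/2$ it is the reflection $\varphi_{\lambda_k(\tau)}(2\tau-t)=-\varphi_{\lambda_k(\tau)}(t)$, i.e.\ $\cos(\lambda_k(\tau)(2\tau-t))=-\cos(\lambda_k(\tau)t)$); the second delicate point is the convergence of the Hermite series in $\mathcal{B}_1^{2\tau}$, for which the two-sided control of the zero spacing of $\varphi_{\,\cdot\,}(\tau)$ and of $s(\lambda)$ coming from \eqref{eq2.3}--\eqref{eq2.4} is essential.
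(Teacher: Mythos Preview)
The paper does not prove this lemma; it is quoted from \cite{GorIva15}. Your approach---Hermite interpolation at the zeros of $\varphi_{\,\cdot\,}(\tau)$, then integrating and killing the derivative terms via $\beta_k=0$---is the right strategy, but two steps are not correct as written.

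First, the interpolation identity is false as stated. A direct computation gives $I'(\lambda_j)=f'(\lambda_j)+2f(\lambda_j)\,\ell_j'(\lambda_j)$, and $\ell_j'(\lambda_j)=\dfrac{\phi''(\lambda_j)}{2\phi'(\lambda_j)}-\dfrac{1}{2\lambda_j}$ (with $\phi(\lambda)=\varphi_\lambda(\tau)$) is not zero in general, so $R=f-I$ does \emph{not} vanish to second order at the nodes and your Phragm\'en--Lindel\"of step collapses. The fix is the usual Hermite correction: replace $f(\lambda_k)\ell_k^2$ by $f(\lambda_k)\bigl[1-2\tilde\ell_k'(\lambda_k^2)(\lambda^2-\lambda_k^2)\bigr]\ell_k^2$. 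The extra piece is a multiple of $(\lambda^2-\lambda_k^2)\ell_k^2$, so after integration it contributes a multiple of $\beta_k$ and disappears once $\beta_k=0$; the final quadrature identity is unaffected.

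Second, the argument for $\beta_k=0$ has a gap. For $\alpha>-1/2$ you cannot read ``$T^\tau h_k(0)=\int_\tau^\tau\cdots=0$'' from \eqref{eq2.17}: by property~(3) of $T^t$ one has $T^\tau h_k(0)=T^0h_k(\tau)=h_k(\tau)$, so the kernel in \eqref{eq2.17} is singular at $x=0$ and the ``integral over a single point'' reasoning is invalid. What actually makes $h_k(\tau)=0$ is the explicit formula coming from \eqref{eq2.10} (equivalently \eqref{eq28}): $\ell_k=\mathcal{J}\!\bigl(C_k\,\varphi_{\lambda_k}\chi_{[0,\tau]}\bigr)$, hence $h_k(\tau)=C_k\varphi_{\lambda_k}(\tau)=0$. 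You need this explicit identification anyway, because for $\alpha\ge 1/2$ one has $|\ell_k(\lambda)|\,s(\lambda)\asymp\lambda^{\alpha-3/2}\notin L^1$, so Lemma~\ref{lem-3} does not apply to $\ell_k$ and ``$h_k$ continuous with support in $[0,\tau]$'' cannot be obtained from Paley--Wiener alone. With these two repairs your outline goes through.
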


\begin{lem}[\cite{GIT19}]\label{lem-6}
Let $\alpha>-1/2$. There exists an even entire function $\omega_{\alpha}(z)$
of exponential type $2$, positive for $z>0$, and such that
\begin{align*}
\omega_{\alpha}(x)&\asymp x^{2\alpha+1},\quad x\to +\infty,\\
|\omega_{\alpha}(iy)|&\asymp y^{2\alpha+1}e^{2y},\quad y\to +\infty.
\end{align*}
\end{lem}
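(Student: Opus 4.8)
The plan is to realize $\omega_\alpha$ as a sum of two squares of a single even entire function of exponential type $1$, the two copies being translated by $\pm c$ for a carefully chosen constant $c$ so that every real zero is killed while the growth along the imaginary axis is preserved.

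As the model I would take the (normalized) Bessel function
\[
g(z)=g_\alpha(z):=\Bigl(\frac z2\Bigr)^{\alpha+1}J_{-\alpha-1}(z)=\sum_{k\ge0}\frac{(-1)^k}{k!\,\Gamma(k-\alpha)}\Bigl(\frac z2\Bigr)^{2k},
\]
which the power series exhibits as a nonzero even entire function with real coefficients; the standard bound $|J_{-\alpha-1}(w)|\lesssim|w|^{-1/2}e^{|\mathrm{Im}\,w|}$ for $|w|\ge1$, together with boundedness near the origin, gives $|g(z)|\lesssim(1+|z|)^{\alpha+1/2}e^{|\mathrm{Im}\,z|}$, so $g$ has exponential type $1$. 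The classical large-argument asymptotics of the Bessel function give, with $A=2^{-\alpha-1/2}\pi^{-1/2}>0$ and $\mu=\pi/4-(\alpha+1)\pi/2$,
\[
g(t)=A\,t^{\alpha+1/2}\bigl(\cos(t-\mu)+O(t^{-1})\bigr)\qquad(t\to+\infty),
\]
hence $|g(x)|\lesssim(1+|x|)^{\alpha+1/2}$ on $\mathbb R$; and, since $\cos(iy+a)=\tfrac12e^{y}e^{-ia}+O(e^{-y})$, for every fixed real $c$
\[
g(iy\pm c)=\tfrac A2\,(iy)^{\alpha+1/2}e^{i\mu}e^{y}\bigl(e^{\mp ic}+O(y^{-1})\bigr)\qquad(y\to+\infty),
\]
in particular $|g(iy)|\asymp y^{\alpha+1/2}e^{y}$.

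Next I would fix $c\in(0,\pi)$ with $c\notin\{\pi/4,\pi/2,3\pi/4\}$ and also outside the countable set of those $c$ for which $g$ possesses two zeros at distance $2c$ or two zeros with sum $2c$, and set
\[
\omega_\alpha(z):=g(z+c)^2+g(z-c)^2 .
\]
Then $\omega_\alpha$ is entire; it is even, because the substitution $z\mapsto-z$ interchanges the two summands (using that $g$ is even); it is real and $\ge0$ on $\mathbb R$; and it is of exponential type $2$ as a sum of squares of type-$1$ functions. Moreover $\omega_\alpha>0$ on $(0,\infty)$: if $\omega_\alpha(x_0)=0$ with $x_0>0$ then $g(x_0+c)=g(x_0-c)=0$, which for $x_0\ge c$ forces two zeros of $g$ at distance $2c$, and for $0<x_0<c$ (where $g(x_0-c)=g(c-x_0)$) two zeros with sum $2c$ — both excluded by our choice of $c$.

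It remains to verify the two asymptotic relations. Inserting the real asymptotics of $g$ into $\omega_\alpha(x)$ and writing $\theta=x-\mu$,
\[
\omega_\alpha(x)=A^2x^{2\alpha+1}\bigl(\cos^2(\theta+c)+\cos^2(\theta-c)+O(x^{-1})\bigr)=A^2x^{2\alpha+1}\bigl(1+\cos2\theta\cos2c+O(x^{-1})\bigr);
\]
since $c\notin\tfrac\pi2\mathbb Z$ we have $|\cos2c|<1$, so the last bracket lies between two positive constants for large $x$, and $\omega_\alpha(x)\asymp x^{2\alpha+1}$. Inserting the imaginary-axis asymptotics,
\[
\omega_\alpha(iy)=\tfrac{A^2}{4}(iy)^{2\alpha+1}e^{2i\mu}e^{2y}\bigl(e^{-2ic}+e^{2ic}+O(y^{-1})\bigr)=\tfrac{A^2}{4}(iy)^{2\alpha+1}e^{2i\mu}e^{2y}\bigl(2\cos2c+O(y^{-1})\bigr),
\]
and since $\cos2c\ne0$ this yields $|\omega_\alpha(iy)|\asymp y^{2\alpha+1}e^{2y}$, which also shows the exponential type is exactly $2$. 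The one genuinely delicate step is securing these two lower bounds, i.e.\ ruling out oscillatory cancellation: this is precisely why the two copies of $g$ are \emph{translated} rather than dilated — a dilation would make the two phases incommensurate and the sum of squares could come arbitrarily close to $0$ on $\mathbb R_+$ — and why $c$ must avoid the finite set $\{\pi/4,\pi/2,3\pi/4\}$; the remaining manipulations are routine bookkeeping with Bessel asymptotics.
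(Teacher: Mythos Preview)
The paper does not prove this lemma; it is quoted from \cite{GIT19} without argument, so there is no ``paper's own proof'' to compare against. Your construction is correct and self-contained. The function $g(z)=(z/2)^{\alpha+1}J_{-\alpha-1}(z)$ is indeed an even entire function of exponential type~$1$ with the stated asymptotics (the standard Bessel expansion is valid uniformly in any sector $|\arg z|\le\pi-\delta$, which covers both the real axis and the lines $z=iy\pm c$); the sum of shifted squares $\omega_\alpha(z)=g(z+c)^2+g(z-c)^2$ is then even, nonnegative on $\mathbb{R}$, and of type exactly~$2$. The identity $\cos^2(\theta+c)+\cos^2(\theta-c)=1+\cos 2\theta\cos 2c$ with $|\cos 2c|<1$ gives the lower bound on the real axis, and $\cos 2c\ne0$ (your exclusion of $c=\pi/4,3\pi/4$) prevents cancellation on the imaginary axis. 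The countability argument for choosing $c$ so that $\omega_\alpha$ has no zero on $(0,\infty)$ is sound, since the real zeros of a nonzero entire function form a discrete (hence countable) set.

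Two small points worth making explicit. First, $g$ is not identically zero even when $\alpha\in\mathbb{Z}_{\ge0}$: in that case the first $\alpha+1$ coefficients $1/\Gamma(k-\alpha)$ vanish, but the term $k=\alpha+1$ survives, so the zero set is genuinely discrete and the countability argument goes through. Second, in your positivity argument ``two zeros'' should be read as ``two real zeros of $g$ (counted with location, not necessarily distinct from each other as abstract zeros)''; with that reading the three cases $x_0>c$, $x_0=c$, $0<x_0<c$ are all covered by your excluded set, and the argument is complete.
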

The next lemma  is an easy consequence of Akhiezer's result \cite[Appendix
VII.10]{Le80}.
\begin{lem}\label{lem-7}
Let $F$ be an even entire function of exponential type $\tau>0$ bounded
on~$\mathbb{R}$. Let $\Omega$ be an even entire function of finite exponential
type, let all the zeroes of $\Omega$ be zeros of $F$, and let, for some $m\in
\mathbb{Z}_{+}$,
\[
\liminf_{y\to +\infty}e^{-\tau y}y^{2m}|\Omega(iy)|>0.
\]
Then the function $F(z)/\Omega(z)$ is an even polynomial of degree at most $2m$.
\end{lem}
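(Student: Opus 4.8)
\textbf{Proof plan for Lemma~\ref{lem-7}.} The idea is to reduce the statement to the classical Akhiezer–Krein representation for entire functions of exponential type that are bounded on the real axis, cited from \cite[Appendix VII.10]{Le80}. First I would set $H(z) = F(z)/\Omega(z)$. Since by hypothesis every zero of $\Omega$ is a zero of $F$ (and both are even, so zeros come in $\pm$ pairs with matching multiplicities after accounting for a possible zero at the origin), $H$ is an entire function; it is even because $F$ and $\Omega$ are. The plan is to show $H$ is an entire function of exponential type $0$ with polynomial growth along the imaginary axis, hence a polynomial, and then to pin down its degree.

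The key growth estimate goes as follows. On the real axis, $F$ is bounded, say $|F(x)|\le M$, while $|\Omega(x)|$ grows (or is at least bounded below away from the real zeros); more to the point, Akhiezer's theorem applies directly to the \emph{ratio}: an entire function of exponential type $\le\tau$ that is bounded on $\mathbb R$ admits a bound $|F(z)|\le M e^{\tau|\mathrm{Im}\,z|}$ for all $z\in\mathbb C$ (a Phragmén–Lindel\"of / Bernstein-type bound). In particular $|F(iy)|\le M e^{\tau|y|}$. Combining this with the hypothesis $\liminf_{y\to+\infty} e^{-\tau y} y^{2m}|\Omega(iy)|>0$, i.e. $|\Omega(iy)|\ge c\, e^{\tau y} y^{-2m}$ for $y$ large, we obtain
\[
|H(iy)| = \frac{|F(iy)|}{|\Omega(iy)|} \le \frac{M e^{\tau y}}{c\, e^{\tau y} y^{-2m}} = \frac{M}{c}\, y^{2m},\qquad y\to+\infty,
\]
and by evenness the same bound holds as $y\to-\infty$. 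Meanwhile $H$ has exponential type $\le\tau - (\text{type of }\Omega)$; since $\Omega$ has finite exponential type, $H$ certainly has finite exponential type, but the imaginary-axis bound above shows the indicator function of $H$ vanishes in the directions $\pm i$, and since $H$ is entire of finite exponential type this forces $H$ to have exponential type $0$ in \emph{every} direction (an entire function of exponential type whose indicator is $\le 0$ at $\pm i\infty$, combined with boundedness considerations on $\mathbb R$ coming from $F/\Omega$, is of minimal type; alternatively one invokes the Akhiezer result directly, which is phrased exactly so as to yield the polynomial conclusion). An entire function of exponential type $0$ with polynomial growth of order $2m$ along a line through the origin is a polynomial of degree at most $2m$; being even, it is an even polynomial of degree at most $2m$.

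The main obstacle is making precise the passage "finite exponential type $+$ polynomial bound on the imaginary axis $\Rightarrow$ polynomial," because a priori $H$ could still have positive exponential type in directions other than $\pm i$ if we only control it on $i\mathbb R$. This is exactly the content one extracts from Akhiezer's theorem: the precise statement there controls $F$ by its boundary behavior and the location of zeros, and dividing by $\Omega$ transfers this into a statement about $H$. Concretely, I would either (a) quote the Akhiezer representation of $F$ as a product over its zeros times $e^{az+b}$-type factors and observe that dividing out $\Omega$ leaves a function whose remaining zeros, together with the imaginary-axis decay, force the exponential factor to be trivial and the residual product to be a polynomial; or (b) apply a Phragmén–Lindel\"of argument in each quadrant: $H$ is of finite exponential type, bounded on $\mathbb R$ up to polynomial factors (since $F$ is bounded and $|\Omega|$ is bounded below on $\mathbb R$ outside a neighborhood of its finitely many real zeros, which are cancelled), and bounded by $C y^{2m}$ on $i\mathbb R_+$; interpolating the indicator over the right angle between the positive real and positive imaginary axes gives indicator $\le 0$ throughout, hence minimal type, hence a polynomial by the classical theorem on entire functions of exponential type $0$ with at most polynomial growth on a ray. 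Either route closes the argument; the degree bound $2m$ is then immediate from the $y^{2m}$ growth, and evenness is inherited from $F$ and $\Omega$.
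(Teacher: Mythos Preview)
Your route~(a) --- invoke Akhiezer's theorem from \cite[Appendix~VII.10]{Le80} --- is exactly what the paper does; the lemma is stated there as ``an easy consequence of Akhiezer's result'' with no further argument, so on that route your proposal and the paper coincide.

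Route~(b), however, has a genuine gap. You assert that $|\Omega|$ is bounded below on $\mathbb{R}$ away from its ``finitely many real zeros,'' but neither claim is in the hypotheses. In the paper's own applications (see the proof of Theorem~\ref{thm-1}) $\Omega$ is built from $\omega_{\alpha}(\lambda)\varphi_{\lambda}^{2}(\tau)$, which has \emph{infinitely} many real zeros, and no lower bound for $|\Omega(x)|$ between them is available. Without control of $H=F/\Omega$ on the real axis, the quadrant Phragm\'en--Lindel\"of step fails: you have the bound $|H(iy)|\lesssim |y|^{2m}$ on the imaginary ray, but nothing on the real ray, and a function of finite exponential type that is polynomially bounded on $i\mathbb{R}$ alone need not be a polynomial --- $\cosh(az)$ is even, of type~$a$, satisfies $|\cosh(aiy)|=|\cos(ay)|\le 1$, yet grows exponentially on $\mathbb{R}$. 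Your remark that ``$H$ has exponential type $\le \tau-(\text{type of }\Omega)$'' is also not a valid inequality; types do not subtract under division (and here the type of $\Omega$ is at least $\tau$, so the formula would give a nonpositive number without justification). What is true, via Hadamard factorization, is that $H$ has order~$\le 1$, but passing from there to ``polynomial of degree $\le 2m$'' genuinely requires the Cartwright-class structure of $F$ that Akhiezer's theorem encodes. So route~(b) as written does not close, and the Akhiezer citation is doing real work, not merely packaging an elementary Phragm\'en--Lindel\"of argument.
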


\section{Chebyshev systems of Jacobi functions}\label{sec-Cheb}

Let $I$ be an interval on $\mathbb{R}_{+}$. By $N_{I}(g)$ we denote  the number of zeros of a continuous function $g$ on interval $I$, counting multiplicity.
A family of real-valued functions $\{\varphi_{k}(t)\}_{k=1}^{\infty}$
defined on an interval $I$ is a Chebyshev system
(T-system) if for any $n\in \mathbb{N}$ and any nontrivial linear combination
$$p(t)=\sum_{k=1}^{n}A_{k}\varphi_{k}(t),$$ there holds $N_{I}(p)\le n-1$, see, e.g., \cite[Chap.~II]{Ac04}.

Our goal is to prove that some systems, constructed with the help of  Jacobi functions, are the Chebyshev systems.
We will use the convenient for us version of Sturm's theorem on zeros of linear combinations of
eigenfunctions of the Sturm--Liouville problem, see~\cite{BH17}.

\begin{thm}[\cite{BH17}]\label{thm-3}

Let $\{u_{k}\}_{k=1}^{\infty}$ be the system of eigenfunctions associated to
eigenvalues $\xi_1<\xi_2<\dots$ of the following Sturm--Liouville problem
on the interval $[0,\tau]$:
\begin{equation}\label{eq5.1}
(wu')'+\xi wu=0,\quad u'(0)=0,\quad \cos \theta\,u(\tau)+\sin \theta\,u'(\tau)=0,
\end{equation}
where $\xi=\lambda^{2}+\lambda_{0}^{2}$,
$\xi_{k}=\lambda_{k}^{2}+\lambda_{0}^{2}$, $w\in C[0,\tau]$, $w\in
C^{1}(0,\tau)$, $w>0$ on $(0,\tau)$, $\theta\in [0,\pi/2]$.

Then for any non-trivial real polynomial of the form
\[
p=\sum_{k=m}^{n}a_{k}u_{k},\quad m,n\in \mathbb{N},\quad m\le n,
\]
we have
\[
m-1\le N_{(0,\tau)}(p)\le n-1.
\]
In particular, every $k$-th eigenfunction $u_{k}$ has exactly $k-1$ simple zeros.
\end{thm}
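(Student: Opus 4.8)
The plan is to derive the statement from the classical Sturm oscillation theory for the parabolic equation attached to the operator $v\mapsto \frac1w(wv')'$, following Sturm's original idea as revisited in \cite{BH17}. After relabelling we may assume $a_m\ne 0$ and $a_n\ne 0$, since dropping a vanishing leading or trailing coefficient only strengthens the claimed bounds. For $(t,s)\in(0,\tau)\times\mathbb{R}$ introduce $U(t,s)=\sum_{k=m}^{n}a_k e^{-\xi_k s}u_k(t)$. Being a finite sum of eigenfunctions, $U(\cdot,s)$ is real-analytic and not identically zero in $t$ for each fixed $s$, and a direct computation using $(wu_k')'=-\xi_k wu_k$ shows that $U$ solves $w\,U_s=(wU_t)_t$ together with the boundary relations $U_t(0,s)=0$ and $\cos\theta\,U(\tau,s)+\sin\theta\,U_t(\tau,s)=0$ inherited from the $u_k$; moreover $U(\cdot,0)=p$.

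The key input is Sturm's theorem on the zeros of solutions of such a one-dimensional parabolic problem (see \cite{BH17} and the references therein to Sturm, Matano and Angenent): for each $s$ the zero set of $t\mapsto U(t,s)$ in $(0,\tau)$ is finite with finite multiplicities, the number $N(s)$ of these zeros counted with multiplicity is a nonincreasing function of $s$, and it is locally constant away from the finitely many values of $s$ at which $U(\cdot,s)$ acquires a multiple zero, where it drops strictly. Here the separated self-adjoint boundary conditions enter to rule out zeros appearing at or disappearing into the endpoints.

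It remains to compute the two limits of $N(s)$. Factoring out the dominant exponential gives $e^{\xi_m s}U(\cdot,s)\to a_m u_m$ in $C^{1}[0,\tau]$ as $s\to+\infty$ and $e^{\xi_n s}U(\cdot,s)\to a_n u_n$ in $C^{1}[0,\tau]$ as $s\to-\infty$. By the classical Sturm--Liouville oscillation theorem, obtained via the Pr\"ufer transformation, the eigenfunction $u_k$ has exactly $k-1$ zeros in $(0,\tau)$, all of them simple; this is precisely the final assertion of the theorem. Since the limiting functions have only simple zeros and their zeros in $(0,\tau)$ stay away from the endpoints, a Hurwitz-type argument (each simple zero persists, and no new zero appears, under $C^{1}$ convergence) yields $N(s)=m-1$ for all large $s$ and $N(s)=n-1$ for all $-s$ large. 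Combining this with the monotonicity of $N$ gives $m-1=\lim_{s\to+\infty}N(s)\le N(0)=N_{(0,\tau)}(p)\le\lim_{s\to-\infty}N(s)=n-1$, which is the assertion.

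The main obstacle is the rigorous justification of Sturm's parabolic zero-counting lemma in the present setting: monotonicity of the multiplicity-counted zero number together with its strict drop at a multiple zero, the control near the two endpoints (in particular for $\theta=0$, i.e. the Dirichlet condition at $\tau$) guaranteeing that no zero escapes through the boundary, and the uniform-in-$s$ estimates needed to pass to the limits $s\to\pm\infty$ without losing or gaining interior zeros. I would settle this by invoking the treatments of Matano and Angenent adapted to a bounded interval with these boundary conditions, as in \cite{BH17}. A self-contained alternative avoids the parabolic machinery at the price of more delicate bookkeeping: for the lower bound, if $p$ had fewer than $m-1$ sign changes one picks a nontrivial $\phi\in\mathrm{span}(u_1,\dots,u_{m-1})$ vanishing at those points, so that $\int_0^\tau w\,p\,\phi\,dt=0$ by orthogonality of eigenfunctions while $w\,p\,\phi$ keeps one sign, a contradiction; the upper bound is dual, with auxiliary functions taken from $\mathrm{span}(u_1,\dots,u_{m-1},u_{n+1},u_{n+2},\dots)$, and there one must in addition control the multiplicities of the zeros of the auxiliary function, which is exactly the point the heat-flow argument finesses.
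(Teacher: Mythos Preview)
The paper does not supply its own proof of this theorem: it is simply quoted from B\'erard--Helffer \cite{BH17} and then used as a black box in the proofs of Theorems~\ref{thm-4} and~\ref{thm-5}. Your sketch is precisely the heat-flow argument of \cite{BH17}, so in that sense your proposal and the paper's ``proof'' (i.e., the citation) coincide.

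Your outline is faithful to the source and the logic is sound. One point worth flagging, since you raise endpoint control yourself: in the applications made in the present paper the weight $w$ is $\Delta(t)$ or $\Delta_*(t)$, which \emph{vanishes} at $t=0$ (to order $2\alpha+1>0$), so the left endpoint is genuinely singular and the condition $u'(0)=0$ has to be read in the limit-circle sense. The argument in \cite{BH17} is written for this situation, but your sketch tacitly treats both endpoints symmetrically; when you invoke ``no zero escapes through the boundary'' at $t=0$ you are relying on the fact that each $u_k$ extends to an even analytic function near $0$ with $u_k(0)\ne 0$, which is what actually prevents a zero of $U(\cdot,s)$ from reaching $0$. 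This is the only place where the degeneracy of $w$ enters, and it is handled in \cite{BH17}; just be aware that it is not covered by the standard Matano--Angenent theorems for uniformly parabolic problems without an extra word.
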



As above we assume that $\tau>0$, $\alpha\ge\beta\ge -1/2$, $\alpha>-1/2$, $\varphi_{\lambda}(t)=\varphi_{\lambda}^{(\alpha,\beta)}(t)$,
$\psi_{\lambda}(t)=\psi_{\lambda}^{(\alpha,\beta)}(t)$, $\lambda_k(t)=\lambda_k^{(\alpha,\beta)}(t)$, and
$\lambda_k^{*}(t)=\lambda_k^{*\,(\alpha,\beta)}(t)$
for $k\in \mathbb{N}$. Let $0<\mu_1(t)<\mu_2(t)<\dots$ be the positive zeros of the function $\varphi_{\lambda}'(t)$ of $\lambda$.

\begin{thm}\label{thm-4}
\textup{(i)} The families of the Jacobi functions
\begin{equation}\label{eq5.2}
\{\varphi_{\lambda_k(\tau)}(t)\}_{k=1}^{\infty},\quad \{\varphi_{\mu_k(\tau)}(t)\}_{k=1}^{\infty}
\end{equation}
form Chebyshev systems on $[0,\tau)$ and  $(0,\tau)$, respectively.

\textup{(ii)}~The families of the Jacobi functions
\[
\{\varphi_{\mu_k(\tau)}'(t)\}_{k=1}^{\infty},\quad \{\varphi_{\lambda_k(\tau)}'(t)\}_{k=1}^{\infty},
\quad\{\varphi_{\mu_k(\tau)}(t)-\varphi_{\mu_k(\tau)}(\tau)\}_{k=1}^{\infty}
\]
form Chebyshev systems on $(0,\tau)$.
\end{thm}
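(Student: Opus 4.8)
The plan is to reduce every claim in Theorem~\ref{thm-4} to an application of the Sturm oscillation theorem in the form of Theorem~\ref{thm-3}, after recognizing each family as a family of eigenfunctions of an appropriate Sturm--Liouville problem of the type \eqref{eq5.1}. For part~(i), first family $\{\varphi_{\lambda_k(\tau)}(t)\}$: the function $\varphi_\lambda$ solves \eqref{eq1.3}, i.e. $(\Delta\varphi_\lambda')'+(\lambda^2+\rho^2)\Delta\varphi_\lambda=0$ with $\varphi_\lambda'(0)=0$; the extra boundary condition $\varphi_{\lambda}(\tau)=0$ is exactly the Dirichlet condition ($\theta=0$) in \eqref{eq5.1}, so the eigenvalues are $\xi_k=\lambda_k^2(\tau)+\rho^2$ with $w=\Delta$, $\lambda_0=\rho$. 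Hence $\{\varphi_{\lambda_k(\tau)}(t)\}$ is precisely the eigenfunction system of this problem, and Theorem~\ref{thm-3} gives $N_{(0,\tau)}(p)\le n-1$ for any combination $p=\sum_{k=1}^n A_k\varphi_{\lambda_k(\tau)}$. To upgrade the interval from $(0,\tau)$ to $[0,\tau)$ one checks the endpoint $t=0$: since $\varphi_{\lambda}(0)=1$ for all $\lambda$, $p(0)=\sum A_k\neq 0$ unless all $A_k$ vanish, so no zero is lost at $0$; thus $N_{[0,\tau)}(p)\le n-1$ and the system is Chebyshev on $[0,\tau)$. The second family $\{\varphi_{\mu_k(\tau)}(t)\}$ is handled the same way but with the Neumann condition $\varphi_{\lambda}'(\tau)=0$, i.e. $\theta=\pi/2$ in \eqref{eq5.1}; here the endpoint $t=0$ cannot be included because $\varphi_{\mu_k(\tau)}(0)=1$ does not help — rather one must stay on $(0,\tau)$ because the problem is posed there and the oscillation count is only guaranteed on the open interval; so this family is Chebyshev on $(0,\tau)$ only, as stated.

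For part~(ii), the idea is to differentiate. By \eqref{eq2.9}, $\varphi_{\lambda}^{(\alpha,\beta)\prime}(t) = -\frac{(\rho^2+\lambda^2)\sh t\ch t}{2(\alpha+1)}\,\varphi_{\lambda}^{(\alpha+1,\beta+1)}(t)$, so up to the nonvanishing factor $-\frac{(\rho^2+\lambda^2)\sh t\ch t}{2(\alpha+1)}$ (which depends on $\lambda$, hence can be absorbed into the coefficients $A_k$ after noting it never vanishes on $(0,\tau)$ and has fixed sign), the family $\{\varphi_{\mu_k(\tau)}'(t)\}$ is, on $(0,\tau)$, a constant-multiple rescaling of $\{\varphi_{\mu_k(\tau)}^{(\alpha+1,\beta+1)}(t)\}$. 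But $\mu_k(\tau)$ are the zeros in $\lambda$ of $\varphi_\lambda^{(\alpha,\beta)\prime}(\tau)$, which by the same identity \eqref{eq2.9} are exactly the zeros of $\varphi_\lambda^{(\alpha+1,\beta+1)}(\tau)$, i.e. $\mu_k(\tau)=\lambda_k^{(\alpha+1,\beta+1)}(\tau)$. Therefore $\{\varphi_{\mu_k(\tau)}'(t)\}$ is, up to nonvanishing $\lambda$-dependent scalars, the family $\{\varphi^{(\alpha+1,\beta+1)}_{\lambda_k^{(\alpha+1,\beta+1)}(\tau)}(t)\}$, which by part~(i) applied with parameters $(\alpha+1,\beta+1)$ is a Chebyshev system on $[0,\tau)$, hence a fortiori on $(0,\tau)$. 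The second family $\{\varphi_{\lambda_k(\tau)}'(t)\}$: by \eqref{eq2.9} again this is, up to nonvanishing scalars, $\{\varphi^{(\alpha+1,\beta+1)}_{\lambda_k^{(\alpha,\beta)}(\tau)}(t)\}$; but now the $\lambda_k^{(\alpha,\beta)}(\tau)$ are precisely the zeros in $\lambda$ of $\varphi_\lambda^{(\alpha,\beta)}(\tau)$, and by \eqref{eq2.12} with $(\alpha,\beta)\to(\alpha-1,\beta-1)$ — or more directly by recognizing that $(\Delta\varphi_\lambda')'=-(\lambda^2+\rho^2)\Delta\varphi_\lambda$ so that $\Delta\varphi_\lambda'$ and $\varphi_\lambda^{(\alpha+1,\beta+1)}$ vanish at $\tau$ iff $\varphi_\lambda^{(\alpha,\beta)}(\tau)$ does or $\varphi_\lambda^{(\alpha,\beta)\prime}(\tau)$ does not — one identifies the $\lambda_k^{(\alpha,\beta)}(\tau)$ with the Dirichlet-type eigenvalues of the $(\alpha+1,\beta+1)$-problem with the boundary condition $\varphi_\lambda^{(\alpha+1,\beta+1)}(\tau)\,\sh\tau\ch\tau = $ (something proportional to $\varphi_\lambda^{(\alpha,\beta)}(\tau)$) vanishing; cleaner: $\varphi^{(\alpha+1,\beta+1)}_{\lambda_k^{(\alpha,\beta)}(\tau)}$ is, up to a scalar, the function $\Delta^{-1}(\Delta\varphi_\lambda')/(\ldots)$ evaluated at $\lambda=\lambda_k^{(\alpha,\beta)}(\tau)$, and these are eigenfunctions of the $(\alpha+1,\beta+1)$ Sturm--Liouville operator with the mixed boundary condition obtained by differentiating — one writes down that boundary condition explicitly from \eqref{eq1.3} and checks it is of the form $\cos\theta\,u(\tau)+\sin\theta\,u'(\tau)=0$ with $\theta\in[0,\pi/2]$, then applies Theorem~\ref{thm-3}. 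The third family $\{\varphi_{\mu_k(\tau)}(t)-\varphi_{\mu_k(\tau)}(\tau)\}$: each summand is again an eigenfunction-type object — note $v_k(t):=\varphi_{\mu_k(\tau)}(t)-\varphi_{\mu_k(\tau)}(\tau)$ satisfies $v_k'(0)=0$, $v_k(\tau)=0$, and $(\Delta v_k')'+(\mu_k^2(\tau)+\rho^2)\Delta v_k = -(\mu_k^2(\tau)+\rho^2)\Delta\varphi_{\mu_k(\tau)}(\tau)$, i.e. it solves an inhomogeneous equation; so the cleanest route is: a linear combination $\sum A_k v_k$ equals $\sum A_k\varphi_{\mu_k(\tau)}(t) - (\sum A_k\varphi_{\mu_k(\tau)}(\tau))$, a combination of $\{\varphi_{\mu_k(\tau)}(t)\}$ together with the constant $1$; since $1=\varphi_0(t)\prod(1-0)$ is the value of no $\varphi_{\mu_k(\tau)}$ but, more to the point, the augmented system $\{1\}\cup\{\varphi_{\mu_k(\tau)}(t)\}_{k=1}^{n-1}$ is handled by Theorem~\ref{thm-3} using that the constant function is the zeroth eigenfunction of the Neumann problem ($\xi=0$, eigenvalue $\rho^2$, below all the $\mu_k^2(\tau)+\rho^2$) — wait, $1$ is \emph{not} a solution of $(\Delta u')'+\rho^2\Delta u=0$ in general; instead $\varphi_0$ is. So for the third family one argues directly: $\sum_{k=1}^n A_k v_k(t)$ vanishes at $t=\tau$ (to order $\ge 1$) and at $t=0$ has derivative $0$; subtracting constants only shifts, and one shows the number of interior zeros is $\le n-1$ by a direct Sturm comparison or by noting $v_k(t) = \varphi_{\mu_k(\tau)}(t)-\varphi_{\mu_k(\tau)}(\tau)$ and $\{v_k\}$ together with the observation that $\sum A_k v_k$ and $\sum A_k\varphi_{\mu_k(\tau)}$ differ by a constant, so they have, at worst, one more zero — combined with $N_{(0,\tau)}(\sum A_k\varphi_{\mu_k(\tau)})\le n-1$ from part~(i) and a boundary argument at $\tau$ to absorb the guaranteed zero there, one recovers $N_{(0,\tau)}(\sum A_k v_k)\le n-1$.

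The main obstacle I anticipate is the bookkeeping of boundary conditions under the $(\alpha,\beta)\to(\alpha+1,\beta+1)$ shift in part~(ii): verifying that when $\varphi_\lambda^{(\alpha,\beta)}$ is replaced by $\varphi_\lambda^{(\alpha+1,\beta+1)}$ (via \eqref{eq2.9}) the condition "$\lambda=\lambda_k^{(\alpha,\beta)}(\tau)$" or "$\lambda=\mu_k(\tau)$" translates into \emph{exactly} a separated boundary condition $\cos\theta\,u(\tau)+\sin\theta\,u'(\tau)=0$ with $\theta\in[0,\pi/2]$ as required by Theorem~\ref{thm-3}, with the right sign so that $\theta$ lands in the admissible range; this requires a careful use of \eqref{eq1.3}, \eqref{eq2.9}, \eqref{eq2.12}, and the interlacing \eqref{eq2.8}. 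The secondary subtlety is the endpoint analysis: one must be precise about \emph{which} interval (open or half-open) each family is Chebyshev on, keeping track of whether $t=0$ or $t=\tau$ contributes a forced zero to every linear combination. Once the Sturm--Liouville identifications are pinned down, each statement follows from Theorem~\ref{thm-3} essentially by inspection, with the factor-absorption (dividing out the nonvanishing $-\frac{(\rho^2+\lambda^2)\sh t\ch t}{2(\alpha+1)}$ on $(0,\tau)$) being routine.
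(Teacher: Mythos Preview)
Your endpoint argument in part~(i) contains a genuine error. You claim that $p(0)=\sum_{k=1}^n A_k\neq 0$ unless all $A_k$ vanish, but this is false: take $A_1=1$, $A_2=-1$. Worse, since $\varphi_\lambda'(0)=0$ for every $\lambda$, whenever $p(0)=0$ the point $t=0$ is a zero of multiplicity at least~$2$, so your bound on $N_{[0,\tau)}(p)$ collapses. The paper resolves this by reversing the order: it proves part~(ii) first (in particular that $\{\varphi_{\lambda_k(\tau)}'(t)\}$ is Chebyshev on $(0,\tau)$), and then for part~(i) observes that every combination $p=\sum A_k\varphi_{\lambda_k(\tau)}$ satisfies $p(\tau)=0$; hence $n$ zeros on $[0,\tau)$ would force $n+1$ zeros on $[0,\tau]$, so by Rolle $p'=\sum A_k\varphi_{\lambda_k(\tau)}'$ would have $\ge n$ zeros on $(0,\tau)$, contradicting~(ii).

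In part~(ii), your treatment of the second and third families is incomplete or muddled, and the paper takes a different route. For $\{\varphi_{\lambda_k(\tau)}'(t)\}$, rather than chasing the mixed boundary condition for the $(\alpha+1,\beta+1)$ problem (which, incidentally, \emph{does} come out with $\theta\in(0,\pi/2)$ via \eqref{eq2.12}, so your route is salvageable), the paper argues by contradiction with Rolle and formula~\eqref{eq2.12}: if $p=\sum a_k\varphi_{\lambda_k(\tau)}^{(\alpha+1,\beta+1)}$ had $n$ zeros on $(0,\tau)$, then $g(t)=(\sh t)^{2\alpha+3}(\ch t)^{2\beta+3}p(t)$ has $n+1$ zeros on $[0,\tau)$ (gaining $t=0$), so $g'$ has $\ge n$ zeros on $(0,\tau)$; but by~\eqref{eq2.12}, $g'$ is a nonvanishing weight times $\sum a_k\varphi_{\lambda_k(\tau)}^{(\alpha,\beta)}$, contradicting the $(0,\tau)$ case of~(i). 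For $\{\varphi_{\mu_k(\tau)}(t)-\varphi_{\mu_k(\tau)}(\tau)\}$, your ``differs by a constant, so at worst one more zero'' heuristic does not yield the bound; the paper instead notes that any combination $p$ of this family vanishes at $t=\tau$, so $n$ zeros on $(0,\tau)$ would give $p'=\sum a_k\varphi_{\mu_k(\tau)}'$ at least $n$ zeros on $(0,\tau)$, contradicting the already-established Chebyshev property of $\{\varphi_{\mu_k(\tau)}'\}$.
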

\begin{proof}
 The families \eqref{eq5.2} are  the systems of eigenvalues for the Sturm--Liouville
problem \eqref{eq5.1} when 
 $\lambda_{0}=\rho=
\alpha+\beta+1
 $, $w(t)=\Delta(t)$, and $\theta=0,\pi/2$.
Then, by Theorem \ref{thm-3}, the statement of part (i) is valid for the interval $(0,\tau)$.
In order to include the endpoint $t=0$ for the family  $\{\varphi_{\lambda_k(\tau)}(t)\}_{k=1}^{\infty}$, we first take care of  part (ii).

Since
\[
\varphi_{\lambda}'(t)=-\frac{(\lambda^2+\rho^2)\sh t\ch t}{2(\alpha+1)}\varphi_{\lambda}^{(\alpha+1,\beta+1)}(t),\quad \rho>0,
\]
it is sufficiently to prove that the families
$\{\varphi_{\mu_k(\tau)}^{(\alpha+1,\beta+1)}(t)\}_{k=1}^{\infty}$ and
$\{\varphi_{\lambda_k(\tau)}^{(\alpha+1,\beta+1)}(t)\}_{k=1}^{\infty}$ are the Chebyshev systems on $(0,\tau)$.

For the family $\{\varphi_{\mu_k(\tau)}^{(\alpha+1,\beta+1)}(t)\}_{k=1}^{\infty}$, this again follows from Theorem \ref{thm-3} since
it
 is the system of eigenvalues of the
Sturm--Liouville problem \eqref{eq5.1} with
 $\lambda_{0}=\rho,
 $
 $w(t)=\Delta^{(\alpha+1,\beta+1)}(t)$, and $\theta=0$.

For the second family $\{\varphi_{\lambda_k(\tau)}^{(\alpha+1,\beta+1)}(t)\}_{k=1}^{\infty}$, let us
assume that the polynomial
\[
p(t)=\sum_{k=1}^{n}a_{k}\varphi_{\lambda_k(\tau)}^{(\alpha+1,\beta+1)}(t)
\]
has $n$ zeros on $(0,\tau)$.
We consider the function $g(t)=(\sh t)^{2\alpha+2}(\ch t)^{2\beta+2}p(t)$. It has $n+1$ zeros including $t=0$.
By Rolle's theorem, for a smooth real function $g$ one has $N_{(0,\tau)}(g')\ge
N_{(0,\tau)}(g)-1\ge n$ (see \cite{BH17}).
In light of \eqref{eq2.13}, we obtain
\[
g'(t)=2(\alpha+1)(\sh t)^{2\alpha+2}(\ch t)^{2\beta+2}\sum_{k=1}^{n}a_{k}\varphi_{\lambda_k(\tau)}(t).
\]
This contradicts the fact that
$\{\varphi_{\lambda_k(\tau)}(t)\}_{k=1}^{\infty}$ is the Chebyshev system on $(0,\tau)$.

To show that $\{\varphi_{\mu_k(\tau)}(t)-\varphi_{\mu_k(\tau)}(\tau)\}_{k=1}^{\infty}$ is the Chebyshev system on $(0,\tau)$, assume that
$p(t)=\sum_{k=1}^{n}a_{k}(\varphi_{\mu_k(\tau)}(t)-\varphi_{\mu_k(\tau)}(\tau))$ has $n$
zeros on $(0,\tau)$. Taking into account the zero $t=\tau$, its derivative $p'(t)=\sum_{k=1}^{n}a_{k}\varphi_{\mu_k(\tau)}'(t)$
has at least $n$ zeros on $(0,\tau)$. This
cannot be true because
 $\{\varphi_{\mu_k(\tau)}'(t)\}_{k=1}^{\infty}$ is the Chebyshev system on $(0,\tau)$.

Now we are in a position to show that the first system in \eqref{eq5.2} is Chebyshev on $[0,\tau)$.
If $p(t)=\sum_{k=1}^{n}a_{k}\varphi_{\lambda_k(\tau)}(t)$ has $n$ zeros on $[0,\tau)$, then always $p(0)=0$.
Moreover, $p(\tau)=0$. Therefore, $p'(t)$ has at least $n$ zeros on $(0,\tau)$, which is impossible
since $p'(t)=\sum_{k=1}^{n}a_{k}\varphi_{\lambda_k(\tau)}'(t)$ and
$\{\varphi_{\lambda_k(\tau)}'(t)\}_{k=1}^{\infty}$ is the Chebyshev system on $(0,\tau)$.

\end{proof}

\begin{thm}\label{thm-5}
\textup{(i)} The families of the Jacobi functions
\begin{equation}\label{eq5.3}
\{\psi_{\lambda_k(\tau)}(t)\}_{k=1}^{\infty},\quad
\{1\}\cup\{\psi_{\lambda_k^{*}(\tau)}(t)\}_{k=1}^{\infty}
\end{equation}
form Chebyshev systems on $(0,\tau)$ and $[0,\tau]$, respectively.

\textup{(ii)}~The families of the Jacobi functions
\[
\{\psi_{\lambda_k^{*}(\tau)}'(t)\}_{k=1}^{\infty},\quad
\{\psi_{\lambda_k^{*}(\tau)}(t)-\psi_{\lambda_k^{*}(\tau)}(\tau)\}_{k=1}^{\infty}
\]
form Chebyshev systems on $(0,\tau)$.
\end{thm}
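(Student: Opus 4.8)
The plan is to mimic the proof of Theorem~\ref{thm-4}, reducing every family to Sturm's theorem (Theorem~\ref{thm-3}) together with Rolle's theorem. I would start from the observation that $\psi_{\lambda}$ solves the Sturm--Liouville problem \eqref{eq2.7}, which is of the form \eqref{eq5.1} with $\lambda_{0}=0$, $w=\Delta_{*}\in C[0,\tau]\cap C^{1}(0,\tau)$, $\Delta_{*}>0$ on $(0,\tau)$, and that $\psi_{0}\equiv1$.

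For part~(i): since the $\lambda_{k}(\tau)$ are exactly the zeros in $\lambda$ of $\varphi_{\lambda}(\tau)=\varphi_{0}(\tau)\psi_{\lambda}(\tau)$, hence of $\psi_{\lambda}(\tau)$, the family $\{\psi_{\lambda_{k}(\tau)}(t)\}_{k=1}^{\infty}$ is precisely the eigenfunction system of \eqref{eq5.1} with $\theta=0$, and Theorem~\ref{thm-3} gives the Chebyshev property on $(0,\tau)$. Likewise the $\lambda$ with $\psi_{\lambda}'(\tau)=0$ are $\lambda=0$ (with lowest eigenfunction $\psi_{0}\equiv1$) and $\lambda=\lambda_{k}^{*}(\tau)$, $k\ge1$, so $\{1\}\cup\{\psi_{\lambda_{k}^{*}(\tau)}(t)\}_{k=1}^{\infty}$ is the full eigenfunction system of \eqref{eq5.1} with $\theta=\pi/2$ and Theorem~\ref{thm-3} gives the Chebyshev property on $(0,\tau)$; the upgrade to $[0,\tau]$ I would defer to the end.

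The core is part~(ii) for the derivative family $\{\psi_{\lambda_{k}^{*}(\tau)}'\}$, to which Theorem~\ref{thm-3} does not apply directly: $\psi_{\lambda}'$ solves a Sturm--Liouville equation with a nontrivial potential term, and, in contrast with Theorem~\ref{thm-4}, there is no substitution turning $\psi_{\lambda}'$ into a Jacobi function with shifted indices. The key identity is obtained from the Wronskian relation displayed just before \eqref{eq2.10} with $\mu=0$: integrating it from $0$ and using $\varphi_{0}(0)=\varphi_{\lambda}(0)=1$, $\varphi_{0}'(0)=\varphi_{\lambda}'(0)=0$, $\Delta\varphi_{0}^{2}=\Delta_{*}$ yields
\[
\Delta_{*}(t)\,\psi_{\lambda}'(t)=-\lambda^{2}\Psi_{\lambda}(t),\qquad
\Psi_{\lambda}(t):=\int_{0}^{t}\Delta_{*}(s)\psi_{\lambda}(s)\,ds ,
\]
so that $\Psi_{\lambda}(0)=0$ and $\Psi_{\lambda_{k}^{*}(\tau)}(\tau)=0$ (because $\psi_{\lambda_{k}^{*}(\tau)}'(\tau)=0$). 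Now suppose, for contradiction, that some nontrivial $p=\sum_{k=1}^{n}a_{k}\psi_{\lambda_{k}^{*}(\tau)}'$ has $N_{(0,\tau)}(p)\ge n$. Put $Q:=\Delta_{*}p=-\sum_{k=1}^{n}a_{k}\lambda_{k}^{*2}(\tau)\,\Psi_{\lambda_{k}^{*}(\tau)}$. Since $\Delta_{*}>0$ on $(0,\tau)$ we have $N_{(0,\tau)}(Q)=N_{(0,\tau)}(p)\ge n$, while $Q(0)=Q(\tau)=0$; hence $N_{(0,\tau)}(Q')\ge n+1$ by Rolle's theorem. But $Q'=\Delta_{*}R$ with $R:=-\sum_{k=1}^{n}a_{k}\lambda_{k}^{*2}(\tau)\,\psi_{\lambda_{k}^{*}(\tau)}$, a nontrivial linear combination of the eigenfunctions $\psi_{\lambda_{1}^{*}(\tau)},\dots,\psi_{\lambda_{n}^{*}(\tau)}$ (that is, of $u_{2},\dots,u_{n+1}$ of the Neumann eigensystem), so Theorem~\ref{thm-3} gives $N_{(0,\tau)}(Q')=N_{(0,\tau)}(R)\le n$ --- a contradiction. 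Thus $\{\psi_{\lambda_{k}^{*}(\tau)}'(t)\}_{k=1}^{\infty}$ is a Chebyshev system on $(0,\tau)$.

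The remaining systems follow by one Rolle step each. For $\{\psi_{\lambda_{k}^{*}(\tau)}(t)-\psi_{\lambda_{k}^{*}(\tau)}(\tau)\}$ and for the auxiliary system $\{\psi_{\lambda_{k}^{*}(\tau)}(t)-1\}$ (which I would use for the closed interval), a nontrivial combination $p$ of the first $n$ of these vanishes automatically at $\tau$, respectively at $0$, so $N_{(0,\tau)}(p)\ge n$ forces $p'=\sum_{k=1}^{n}a_{k}\psi_{\lambda_{k}^{*}(\tau)}'$ to have at least $n$ zeros on $(0,\tau)$, contradicting the Chebyshev property just proved. Finally, to pass from $(0,\tau)$ to $[0,\tau]$ for $\{1\}\cup\{\psi_{\lambda_{k}^{*}(\tau)}\}$: if a nontrivial $p=a_{0}+\sum_{k=1}^{n}a_{k}\psi_{\lambda_{k}^{*}(\tau)}$ had $N_{[0,\tau]}(p)\ge n+1$, the bound $N_{(0,\tau)}(p)\le n$ from part~(i) would force $p$ to vanish at an endpoint; rewriting $p$ in the system $\{\psi_{\lambda_{k}^{*}(\tau)}-1\}$ if $p(0)=0$, or in $\{\psi_{\lambda_{k}^{*}(\tau)}-\psi_{\lambda_{k}^{*}(\tau)}(\tau)\}$ if $p(\tau)=0$, and using $p'(0)=p'(\tau)=0$ together with Rolle's theorem and the derivative system, one derives a contradiction exactly as in the endpoint argument of Theorem~\ref{thm-4}; the delicate point here is the accounting of the multiplicities of the zeros of $p$ and $p'$ at $t=0$ and $t=\tau$. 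I expect the derivative family in part~(ii), and secondarily this closed-interval bookkeeping, to be the main obstacles.
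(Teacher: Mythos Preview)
Your proposal is correct and, for the most part, parallels the paper: part~(i) on the open interval, the system $\{\psi_{\lambda_k^{*}(\tau)}-\psi_{\lambda_k^{*}(\tau)}(\tau)\}$ in~(ii), and the closed-interval upgrade (forcing an endpoint zero and passing to $p'$) all follow the paper's line. You rightly flag the multiplicity bookkeeping at the endpoints---where $p'$ vanishes automatically---as the delicate spot; the paper is equally terse there.

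The substantive difference is in the derivative system $\{\psi_{\lambda_k^{*}(\tau)}'\}$. The paper does \emph{not} argue via your integral $\Psi_{\lambda}$. Instead it imports a transformation from~\cite{GorIva15}: with $w=\Delta_{*}$, $W(t)=\int_{0}^{t}w$, and $w_{0}=W^{2}/w$, the function $v_{\lambda}(t)=-w(t)W(t)^{-1}\lambda^{-2}\psi_{\lambda}'(t)$ solves a \emph{new} Sturm--Liouville problem $(w_{0}v')'+\lambda^{2}w_{0}v=0$, $v'(0)=0$; since $v_{\lambda_k^{*}(\tau)}(\tau)=0$, Theorem~\ref{thm-3} applies directly to $\{v_{\lambda_k^{*}(\tau)}\}$, and positivity of $w/W$ on $(0,\tau)$ transfers the Chebyshev property to $\{\psi_{\lambda_k^{*}(\tau)}'\}$. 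Your route is more self-contained: you only use $\Delta_{*}\psi_{\lambda}'=-\lambda^{2}\Psi_{\lambda}$ (immediate from integrating~\eqref{eq2.7}), note that $\Psi_{\lambda_k^{*}(\tau)}$ vanishes at \emph{both} endpoints, and then a single Rolle step throws the zero count onto a combination of $\psi_{\lambda_1^{*}(\tau)},\dots,\psi_{\lambda_n^{*}(\tau)}$---the Neumann eigenfunctions $u_{2},\dots,u_{n+1}$---to which Theorem~\ref{thm-3} already applies from part~(i). The two approaches are linked by $v_{\lambda}=\Psi_{\lambda}/W$, but the paper must cite or verify the auxiliary Sturm--Liouville equation for $v_{\lambda}$, whereas your argument needs nothing beyond what part~(i) has already set up.
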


\begin{proof}
The families \eqref{eq5.3} are the systems of eigenvalues for the Sturm--Liouville problem \eqref{eq5.1}
in the case  $\lambda_{0}=0$, $\Delta_{*}(t)=\varphi_0^2(t)\Delta(t)$, and $\theta=0,\pi/2$.
Then the statement of part (i) is valid for the interval $(0,\tau)$.
In order to include the endpoints, we first prove part (ii).

Let $w(t)=\Delta_{*}(t)=\varphi_0^2(t)\Delta(t)$, $W(t)=\int_{0}^tw(s)\,ds$, $w_0(t)=W^2(t)w^{-1}(t)$. It is known \cite{GorIva15}
that $v_{\lambda}(t)=-w(t)W^{-1}(t)\lambda^{-2}\psi_{\lambda}'(t)$ is the eigenfunction of the Sturm--Liouville problem
\[
(w_0v')'+\lambda^2w_0v=0,\quad v'(0)=0.
\]
Hence, the family $\{v_{\lambda_k^{*}(\tau)}'(t)\}_{k=1}^{\infty}$ is the system of eigenvalues for the Sturm--Liouville problem
\[
(w_0v')'+\lambda^2w_0v=0,\quad v'(0)=0,\quad v(\tau)=0.
\]
By Theorem~\ref{thm-3}, the family $\{v_{\lambda_k^{*}(\tau)}'(t)\}_{k=1}^{\infty}$ and the family
$\{\psi_{\lambda_k^{*}(\tau)}'(t)\}_{k=1}^{\infty}$ are the Chebyshev systems on $(0,\tau)$.

To prove that $\{\psi_{\lambda_k^{*}(\tau)}(t)-\psi_{\lambda_k^{*}(\tau)}(\tau)\}_{k=1}^{\infty}$ forms the Chebyshev system on $(0,\tau)$, we assume that
$p(t)=\sum_{k=1}^{n}a_{k}(\psi_{\lambda_k^{*}(\tau)}(t)-\psi_{\lambda_k^{*}(\tau)}(\tau))$ has $n$
zeros on $(0,\tau)$. Taking into account the zero $t=\tau$, its derivative
$p'(t)=\sum_{k=1}^{n}a_{k}\psi_{\lambda_k^{*}(\tau)}'(t)$
has at least $n$ zeros on $(0,\tau)$. This contradicts the fact that
 $\{\psi_{\lambda_k^{*}(\tau)}'(t)\}_{k=1}^{\infty}$ is the Chebyshev system on $(0,\tau)$.

Now we are in a position to show that the second system in \eqref{eq5.3} is Chebyshev on $[0,\tau]$.
If $p(t)=\sum_{k=0}^{n-1}a_{k}\psi_{\lambda_k^{*}(\tau)}(t)$
(we assume $\lambda_0^{*}(\tau)=0$) has $n$ zeros on $[0,\tau]$, then one of the endpoints is zero. Then
$p'(t)=\sum_{k=1}^{n-1}a_{k}\psi_{\lambda_k^{*}(\tau)}'(t)$ has at least $n-1$ zeros on $(0,\tau)$, which is impossible for Chebyshev system
$\{\psi_{\lambda_k^{*}(\tau)}'(t)\}_{k=1}^{\infty}$.
\end{proof}

\section{Proof of Theorem~\ref{thm-1}
}\label{sec-LJ}

Below we give a 
 solution of the generalized $m$-Logan problem for the
Jacobi transform. As above, let $m\in\mathbb{N}$ and $\tau>0$. For brevity, we
denote
\[
\lambda_{k}=\lambda_k(\tau),\quad \gamma_k=\gamma_k(\tau).
\]

We need the following

\begin{lem}
Let $f(\lambda)$ be a nontrivial function from
$\mathcal{L}_{m}(\tau,\mathbb{R}_{+})$ such that $\Lambda_{m}(f)<\infty$. Then
\begin{equation}\label{H-1}
f\in L^1(\mathbb{R}_{+},\lambda^{2m-2}\,d\sigma),\quad
(-1)^{m-1}\int_{0}^{\infty}\lambda^{2m-2}f(\lambda)\,d\sigma(\lambda)\ge
0,
\end{equation}
\end{lem}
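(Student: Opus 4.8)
The plan is to exploit the asymptotic behavior of the Jacobi function $\varphi_\lambda(\tau)$ and the spectral weight $s(\lambda)$ together with the finiteness of $\Lambda_m(f)$. By hypothesis $f(\lambda)=\int_0^{2\tau}\varphi_\lambda(t)\,d\nu(t)$ with $\nu$ of bounded variation, so $f$ is an even entire function of exponential type at most $2\tau$ bounded on $\mathbb{R}$; moreover $f\in L^1(\mathbb{R}_+,\lambda^{2m-4}\,d\sigma)$ when $m\ge2$. The key point is that $\Lambda_m(f)<\infty$ means $(-1)^{m-1}f(\lambda)\le0$ for all large $\lambda$, so the (signed) integrand $(-1)^{m-1}\lambda^{2m-2}f(\lambda)s(\lambda)$ is eventually $\le0$; to conclude integrability and nonnegativity of the integral I need to control the tail more precisely.

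The main steps: (i) First establish $f\in L^1(\mathbb{R}_+,\lambda^{2m-2}\,d\sigma)$. Using \eqref{eq2.4}, $d\sigma(\lambda)\asymp\lambda^{2\alpha+1}\,d\lambda$ for large $\lambda$, so the claim is equivalent to $\lambda^{2m-2+2\alpha+1}f(\lambda)\in L^1$ near infinity. For $m\ge2$ we already know $\lambda^{2m-4}f(\lambda)\in L^1(d\sigma)$; to upgrade by two powers, I would use that on the tail $(-1)^{m-1}f\le 0$, so no cancellation occurs, and combine this with a growth/decay estimate for $f$. Since $f\in\mathcal B_1^{2\tau}$-type (it is the Jacobi transform of a compactly supported measure), one should be able to invoke Lemma~\ref{lem-3}/Paley--Wiener together with the decay \eqref{eq2.5} of $\varphi_\lambda$, or alternatively use the quadrature formula \eqref{eq2.16}. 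Actually the cleanest route: apply the Gauss quadrature \eqref{eq2.16} to an auxiliary function that dominates $\lambda^{2m-2}|f|$ — but more directly, because $(-1)^{m-1}f(\lambda)\le 0$ eventually and the lower-order moments $\int\lambda^{2k}f\,d\sigma=0$ for $k\le m-2$ exist, one writes $\int_0^R \lambda^{2m-2}f\,d\sigma$ and shows it is bounded as $R\to\infty$ by relating it back to the vanishing moment of order $m-2$; this is where the argument from \cite{GIT19} for Problem~C is adapted.

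(ii) Having integrability, the inequality $(-1)^{m-1}\int_0^\infty\lambda^{2m-2}f(\lambda)\,d\sigma(\lambda)\ge0$ should follow by a limiting/averaging argument: for each $\varepsilon>0$ consider $f(\lambda)\rho_{m-1,\varepsilon}(\lambda)$ where $\rho_{m-1,\varepsilon}$ is the function from \eqref{eq2.14}; by Lemma~\ref{lem-2}, $\rho_{m-1,\varepsilon}\ge0$, $\rho_{m-1,\varepsilon}(\lambda)\to\lambda^{2m-2}$, and $\rho_{m-1,\varepsilon}$ is (up to constants) $(-1)^{m-1}(\psi_\lambda(\varepsilon)-\text{Taylor polynomial})$, hence a positive-definite-type object expressible as $\int\varphi_\lambda(t)\,d(\text{measure})$. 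Then $\int_0^\infty f(\lambda)\rho_{m-1,\varepsilon}(\lambda)\,d\sigma(\lambda)$ can be rewritten, using the orthogonality \eqref{orth} to kill the Taylor-polynomial part and Fubini plus $\mathrm{supp}\,\nu\subset[0,2\tau]$, as an integral against $\nu$ of something manifestly $\ge0$ (a value of $\mathcal{J}^{-1}$ of a nonnegative function, or a translation of $g_m\ge0$). Passing $\varepsilon\to0$ via dominated convergence — justified by the integrability from step (i) and the uniform bound $0\le\rho_{m-1,\varepsilon}(\lambda)\le C\lambda^{2m-2}$ — yields $(-1)^{m-1}\int_0^\infty\lambda^{2m-2}f\,d\sigma\ge0$. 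For $m=1$ the statement is just $\int_0^\infty f\,d\sigma\ge0$ with $f\in L^1(d\sigma)$, and one replaces $\rho_{m-1,\varepsilon}$ by $\psi_\lambda(\varepsilon)=\varphi_\lambda(\varepsilon)/\varphi_0(\varepsilon)$ directly, using $\int_0^\infty f(\lambda)\psi_\lambda(\varepsilon)\,d\sigma(\lambda)=\varphi_0(\varepsilon)^{-1}\int_0^{2\tau}\varphi_0(\varepsilon)\ldots$ — more precisely, $\int f(\lambda)\varphi_\lambda(\varepsilon)\,d\sigma(\lambda)=\mathcal J^{-1}f(\varepsilon)$, which is a translate of the nonnegative measure $\nu$ and hence $\ge0$, then let $\varepsilon\to0$.

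The main obstacle I anticipate is step (i): proving the improved integrability $f\in L^1(\mathbb{R}_+,\lambda^{2m-2}\,d\sigma)$ starting only from $f\in L^1(\mathbb{R}_+,\lambda^{2m-4}\,d\sigma)$ plus the one-sided sign condition $(-1)^{m-1}f\le0$ on the tail and the vanishing of lower moments. The subtlety is that $f$ need not be in $L^1(d\sigma)$ with the full weight a priori; the gain of two powers must come from the sign condition preventing oscillatory cancellation, combined with the fact that partial integrals $\int_0^R\lambda^{2m-2}f\,d\sigma$ are, after integration by parts against the (truncated) moment conditions, controlled by quantities that converge. This is exactly the technical heart that parallels \cite{GIT19} but must be redone because $s(\lambda)\asymp\lambda^{2\alpha+1}$ and $\varphi_\lambda(\tau)\asymp\lambda^{-\alpha-1/2}$ behave differently from the Fourier/Hankel case; I would handle it by writing $f(\lambda)=\int_0^{2\tau}\varphi_\lambda(t)\,d\nu(t)$, splitting $\nu$ near the origin where it is non-decreasing, and estimating via \eqref{eq2.5} and \eqref{eq2.10}-type orthogonality of $\varphi_\lambda$ against $d\sigma$.
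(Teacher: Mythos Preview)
Your outline for $m\ge 2$ is essentially the paper's argument, but you have the two steps in the wrong order and you misidentify the obstacle. The paper does \emph{not} first prove integrability and then the inequality; it obtains both at once from the single identity
\[
(-1)^{m-1}\int_0^\infty \rho_{m-1,\varepsilon}(\lambda)f(\lambda)\,d\sigma(\lambda)
=\frac{(2m-2)!}{(-1)^{m-1}\varphi_0(\varepsilon)\psi_\varepsilon^{(2m-2)}(0)}\,\mathcal{J}^{-1}f(\varepsilon)\ge 0,
\]
which is well-defined because $\rho_{m-1,\varepsilon}$ is a bounded function ($\eta_\varepsilon$) plus a polynomial of degree $2m-4$ (handled by the assumed moments). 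One then splits at $\Lambda_m(f)$, uses that $(-1)^m f=|f|$ on the tail, and applies \emph{Fatou} (not dominated convergence) as $\varepsilon\to 0$ to get $\lambda^{2m-2}f\in L^1(d\sigma)$; the inequality follows by passing to the limit in the displayed identity. Your claimed uniform bound $\rho_{m-1,\varepsilon}(\lambda)\le C\lambda^{2m-2}$ is not established anywhere and is not needed.

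The genuine gap is the case $m=1$. You write ``the statement is just $\int_0^\infty f\,d\sigma\ge 0$ with $f\in L^1(d\sigma)$'' and propose to test against $\psi_\lambda(\varepsilon)=\varphi_\lambda(\varepsilon)/\varphi_0(\varepsilon)$. But for $m=1$ the class $\mathcal{L}_1(\tau,\mathbb{R}_+)$ imposes no integrability on $f$ at all; $f\in L^1(\mathbb{R}_+,d\sigma)$ is precisely the conclusion. Since $|\varphi_\lambda(\varepsilon)|\lesssim\lambda^{-\alpha-1/2}$ while $s(\lambda)\asymp\lambda^{2\alpha+1}$, the function $\varphi_{\,\cdot}(\varepsilon)$ is not in $L^1(d\sigma)$ either, so the integral $\int_0^\infty f(\lambda)\varphi_\lambda(\varepsilon)\,d\sigma(\lambda)$ is not a priori defined and your Fubini step is unjustified. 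The paper fixes this by replacing $\varphi_\lambda(\varepsilon)$ with a test function that \emph{is} in $L^1(d\sigma)$: set $\Psi_\varepsilon=c_\varepsilon^{-2}(\chi_\varepsilon\ast\chi_\varepsilon)_\mu$, so that $\mathcal{J}\Psi_\varepsilon=c_\varepsilon^{-2}(\mathcal{J}\chi_\varepsilon)^2\in L^1(\mathbb{R}_+,d\sigma)$, $0\le\mathcal{J}\Psi_\varepsilon\le 1$, $\mathcal{J}\Psi_\varepsilon\to 1$. Now $\int f\cdot\mathcal{J}\Psi_\varepsilon\,d\sigma$ converges absolutely (since $f$ is bounded), and Fubini against the compactly supported $\Psi_\varepsilon$ yields $\int_0^{2\varepsilon}\Psi_\varepsilon\,d\nu\ge 0$ for small $\varepsilon$; splitting at $\Lambda_1(f)$ and applying Fatou gives $f\in L^1(d\sigma)$. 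This mollifier construction is the missing idea in your proposal.
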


\begin{proof}
Let $m=1$. Let $\varepsilon>0$, $\chi_{\varepsilon}(t)$ be the characteristic function of
the 
  interval~$[0, \varepsilon]$,
\[
\Psi_{\varepsilon}(t)=c_{\varepsilon}^{-2}(\chi_{\varepsilon}\ast
\chi_{\varepsilon})_{\mu}(t),\quad c_{\varepsilon}=\int_{0}^{\varepsilon}\,d\mu.
\]
By Lemma~\ref{lem-5}, $\mathrm{supp}\,\Psi_{\varepsilon}\subset
[0,2\varepsilon]$. According to the properties (1)--(4) of the generalized
translation operator $T^t$ and Lemma~\ref{lem-5}, we have
\[
\Psi_{\varepsilon}(t)\geq 0,\quad
\mathcal{J}\Psi_{\varepsilon}(\lambda)=c_{\varepsilon}^{-2}(\mathcal{J}\chi_{\varepsilon}(\lambda))^2,
\]
\[
\int_{0}^{\infty}\Psi_{\varepsilon}(t)\,d\mu(t)=
c_{\varepsilon}^{-2}\int_{0}^{\infty}\chi_{\varepsilon}(x)\int_{0}^{\infty}T^x\chi_{\varepsilon}(t)\,d\mu(t)\,d\mu(x)=1.
\]
Since $\chi_{\varepsilon}\in L^{1}(\mathbb{R}_{+}, d\mu)\cap L^{2}(\mathbb{R}_{+},
d\mu)$, $\mathcal{J}\chi_{\varepsilon}\in L^{2}(\mathbb{R}_{+}, d\sigma)\cap
C_b(\mathbb{R}_{+})$, and
\[
|\mathcal{J}\chi_{\varepsilon}(\lambda)|\leq c_{\varepsilon},\quad\lim\limits_{\varepsilon\to
0}c_{\varepsilon}^{-1}\mathcal{J}\chi_{\varepsilon}(\lambda)=\lim\limits_{\varepsilon\to
0}c_{\varepsilon}^{-1}\int_{0}^{\varepsilon}\varphi_{\lambda}(t)\,d\mu(t)=1,
\]
we obtain
\[
\mathcal{J}\Psi_{\varepsilon}\in L^{1}(\mathbb{R}_{+}, d\sigma)\cap
C_b(\mathbb{R}_{+}), \quad 0\leq \mathcal{J}\Psi_{\varepsilon}(\lambda)\leq
1,\quad \lim\limits_{\varepsilon\to
0}\mathcal{J}\Psi_{\varepsilon}(\lambda)=1.
\]

The fact that $\mathcal{L}_{1}(\tau,\mathbb{R}_{+})\subset
L^1(\mathbb{R}_{+},d\nu_{\alpha})$ can be verified with the help of  Logan's method from
\cite[Lemma]{Lo83b}.
Indeed, let
$f\in \mathcal{L}_{1}(\tau,\mathbb{R}_{+})$ be given by
\eqref{def-sigma1--}.
Taking into account that $d\nu\ge 0$ in some neighborhood of the origin,
we derive,  for sufficiently small $\varepsilon>0$, that
\begin{align*}
0&\le \int_{0}^{2\varepsilon}\Psi_{\varepsilon}(t)\,d\nu(t)=
\int_{0}^{\infty}\Psi_{\varepsilon}(t)\,d\nu(t)=
\int_{0}^{\infty}f(\lambda)\mathcal{J}\Psi_{\varepsilon}(\lambda)\,d\sigma(\lambda)\\
&=\int_{0}^{\lambda_{1}(f)}f(\lambda)\mathcal{J}\Psi_{\varepsilon}(\lambda)(\lambda)\,d\sigma(\lambda)-
\int_{\lambda_{1}(f)}^{\infty}|f(\lambda)|\mathcal{J}\Psi_{\varepsilon}(\lambda)\,d\sigma(\lambda).
\end{align*}
This gives
\[
\int_{\lambda_{1}(f)}^{\infty}|f(\lambda)|\mathcal{J}\Psi_{\varepsilon}(\lambda)\,d\sigma(\lambda)\le
\int_{0}^{\lambda_{1}(f)}f(\lambda)\mathcal{J}\Psi_{\varepsilon}(\lambda)\,d\sigma(\lambda)\le
\int_{0}^{\lambda_{1}(f)}|f(\lambda)|\,d\sigma(\lambda).
\]
Letting $\varepsilon\to 0$, by Fatou's lemma, we have
\[
\int_{\lambda_{1}(f)}^{\infty}|f(\lambda)|\,d\sigma(\lambda)\le
\int_{0}^{\lambda_{1}(f)}|f(\lambda)|\,d\sigma(\lambda)<\infty.
\]

Let $m\ge 2$. In light of the definition of the class $\mathcal{L}_{m}(\tau,\mathbb{R}_{+})$, we have
$f\in L^1(\mathbb{R}_{+},d\sigma)$ and $d\nu(t)\ge 0$ on segment $[0,\varepsilon]$,
therefore $d\nu(t)=\mathcal{J}^{-1}f(t)d\mu(t)$ and $\mathcal{J}^{-1}f(\varepsilon)\ge 0$ for sufficiently small $\varepsilon$.

Consider the function $\rho_{m-1,\varepsilon}(\lambda)$ defined by \eqref{eq2.14}.
Using Lemma \ref{lem-2}, the orthogonality property \eqref{orth}, and
the equality $(-1)^{m}f(\lambda)=|f(\lambda)|$ for $\lambda\ge \Lambda_{m}(f)$, we arrive at
\begin{align}
(-1)^{m-1}\int_{0}^{\infty}\rho_{m-1,\varepsilon}(\lambda)f(\lambda)\,d\sigma(\lambda)&=
\frac{(2m-2)!}{(-1)^{m-1}\varphi_0(\varepsilon)\psi_{\varepsilon}^{(2m-2)}(0)}
\int_{0}^{\infty}f(\lambda)\varphi_{\lambda}(\varepsilon)\,d\sigma(\lambda)
\notag\\
&=\frac{(2m-2)!}{(-1)^{m-1}\varphi_0(\varepsilon)\psi_{\varepsilon}^{(2m-2)}(0)}\,\mathcal{J}^{-1}f(\varepsilon)\ge
0. \label{eq21}
\end{align}
Thus,
\[
(-1)^{m}\int_{\Lambda_{m}(f)}^{\infty}\rho_{m-1,\varepsilon}(\lambda)f(\lambda)\,d\sigma(\lambda)\le
(-1)^{m-1}\int_{0}^{\Lambda_{m}(f)}\rho_{m-1,\varepsilon}(\lambda)f(\lambda)\,d\sigma(\lambda).
\]
Taking into account \eqref{eq21}, Lemma \ref{lem-2}, and Fatou's lemma, we have
\begin{align*}
&(-1)^{m}\int_{\Lambda_{m}(f)}^{\infty}\lambda^{2m-2}f(\lambda)\,d\sigma(\lambda)
=(-1)^{m}\int_{\Lambda_{m}(f)}^{\infty}\lim_{\varepsilon\to
0}\rho_{m-1,\varepsilon}(\lambda)f(\lambda)\,d\sigma(\lambda)\\
&\qquad \le\liminf_{\varepsilon\to
0}{}(-1)^{m}\int_{\Lambda_{m}(f)}^{\infty}\rho_{m-1,\varepsilon}(\lambda)f(\lambda)\,d\sigma(\lambda)\\
&\qquad \le\liminf_{\varepsilon\to
0}{}(-1)^{m-1}\int_0^{\Lambda_{m}(f)}\rho_{m-1,\varepsilon}(\lambda)f(\lambda)\,d\sigma(\lambda)\\
&\qquad =(-1)^{m-1}\int_0^{\Lambda_{m}(f)}\lim_{\varepsilon\to
0}\rho_{m-1,\varepsilon}(\lambda)f(\lambda)\,d\sigma(\lambda)\\
&\qquad =(-1)^{m-1}\int_0^{\Lambda_{m}(f)}\lambda^{2m-2}f(\lambda)\,d\sigma(\lambda)<\infty.
\end{align*}
Therefore, we obtain that $f\in L^1(\mathbb{R}_{+},\lambda^{2m-2}\,d\sigma)$ and, using
\eqref{eq21}, the condition
$
(-1)^{m-1}\int_{0}^{\infty}\lambda^{2m-2}f(\lambda)\,d\sigma(\lambda)\ge
0$ holds and \eqref{H-1} follows.

\end{proof}

\begin{proof}[Proof of Theorem~\ref{thm-1}]
The proof is divided into several steps.

\subsection*{Lower bound}
Firstly, we establish the inequality
\begin{equation*}
L_m(\tau,\mathbb{R}_{+})\ge \lambda_m.
\end{equation*}

Consider a function $f\in \mathcal{L}_{m}(\tau,\mathbb{R}_{+})$. Let us show that $\lambda_m\le \Lambda_{m}(f)$.
Assume the converse, i.e., $\Lambda_{m}(f)<\lambda_{m}$. Then $(-1)^{m-1}f(\lambda)\le 0$ for
$\lambda\ge \Lambda_{m}(f)$. Using \eqref{H-1} implies $\lambda^{2m-2}f(\lambda)\in
\mathcal{B}_{1}^{2\tau}$. Therefore, by Gauss' quadrature formula
\eqref{eq2.16} and \eqref{orth}, we obtain
\begin{align}
0&\le(-1)^{m-1}\int_{0}^{\infty}\lambda^{2m-2}f(\lambda)\,d\sigma(\lambda) =
(-1)^{m-1}\int_{0}^{\infty}\prod_{k=1}^{m-1}(\lambda^2-\lambda_{k}^2)
f(\lambda)\,d\sigma(\lambda) \notag\\
&=
(-1)^{m-1}\sum_{s=m}^{\infty}
\gamma_{s}f(\lambda_{s})\prod_{k=1}^{m-1}(\lambda_{s}^2-\lambda_{k}^2)\le 0.
\label{eq22}
\end{align}
Therefore, $\lambda_{s}$ for
 $s\ge m$ are zeros of multiplicity $2$ for $f$.
Similarly, applying
Gauss' quadrature formula for $f$, we derive that
\begin{equation}\label{eq23}
0=\int_{0}^{\infty}
\prod_{\substack{k=1\\ k\ne s}}^{m-1}(\lambda^2-\lambda_{k}^2)f(\lambda)\,d\sigma(\lambda) =
\gamma_{s}\prod_{\substack{k=1\\ k\ne s}}^{m-1}(\lambda_{s}^2-\lambda_{k}^2)f(\lambda_{s}),\quad
s=1,\dots,m-1.
\end{equation}
Therefore, $\lambda_{s}$ for $s=1,\dots,m-1$ are zeros of $f$.

From $f\in L^1(\mathbb{R}_{+},\,d\sigma)$ and asymptotic behavior of  $s(\lambda)$ given by \eqref{eq2.4}
it follows that $f\in L^1(\mathbb{R}_{+},\lambda^{2\alpha+1}\,d\lambda)$.
Consider the function $\omega_{\alpha}(\lambda)$ from Lemma \ref{lem-6} and set
\[
W(\lambda)=\omega_{\alpha}(\lambda)f(\lambda),\quad
\Omega(\lambda)=\frac{\omega_{\alpha}(\lambda)\varphi_{\lambda}^2(\tau)}
{\prod_{k=1}^{m-1}(1-\lambda^2/\lambda_{k}^2)}.
\]
Then  functions $W$ and $\Omega$ are even and have
exponential type $4$.
Since $\omega_{\alpha}(\lambda)\asymp \lambda^{2\alpha+1}$, $\lambda\to +\infty$,
then $W\in L^{1}(\mathbb{R})$ and $W$ is bounded on $\mathbb{R}$.

From \eqref{eq2.3} and Lemma \ref{lem-6}, we have
\[
|\Omega(iy)|\asymp y^{-2m+2}e^{4y},\quad y\to +\infty.
\]
Taking into account that
all zeros of $\Omega(\lambda)$ are also zeros of $F(\lambda)$ and  Lemma \ref{lem-7}, we arrive at
\[
f(\lambda)=\frac{\varphi_{\lambda}^2(\tau)\sum_{k=0}^{m-1}c_k\lambda^{2k}}
{\prod_{k=1}^{m-1}(1-\lambda^2/\lambda_{k}^2)},
\]
where $c_{k}\neq 0$ for some $k$. By \eqref{eq2.3},
$\varphi_{\lambda}^2(\tau)=O(\lambda^{-2\alpha-1})$ as
$\lambda\to +\infty$, and by \eqref{eq2.4} $\varphi_{\lambda}^2(\tau)\notin L^{1}(\mathbb{R}_{+},d\sigma)$.
This contradicts $f\in L^1(\mathbb{R}_{+},\lambda^{2m-2}\,d\sigma)$. Thus, $\Lambda_{m}(f)\ge \lambda_{m}$
and $L_m(\tau, \mathbb{R}_{+})\ge \lambda_{m}$.

\subsection*{Extremality of $f_{m}$}
Now we consider the function $f_{m}$ given by \eqref{extr}. Note that by \eqref{eq2.5} we have
the estimate $f_{m}(\lambda)=O(\lambda^{-2\alpha-1-2m})$ as $\lambda\to +\infty$ and hence
$f_{m}\in L^1(\mathbb{R}_{+},\lambda^{2m-2}\,d\sigma)$. Moreover, $f_m$ is an entire function of exponential
type $2\tau$ and $\Lambda_{m}(f_{m})=\lambda_{m}$.

To verify facts that $f_{m}(\lambda)$ is positive definite with respect to the inverse Jacobi transform
and the property \eqref{orth-fam} holds,
we first note that Gauss' quadrature formula implies \eqref{orth-fam}.
From the property (2) of the generalized translation operator $T^t$, one has that $g_m(t)=T^{\tau}G_m(t)$; see Remark \ref{rem-zam}.
Since $T^t$ is  a positive operator, to show the inequality $g_m(t)\ge 0$, it is enough to prove $G_m(t)\ge 0$.
This will be shown in the next subsection.

Thus, we have shown that  $f_{m}$ is the extremizer.
The uniqueness of $f_m$ will be proved later.

\subsection*{Positive definiteness of $F_{m}$}
Our goal here is to find the function $G_m(t)$ such that $F_m(\lambda)=\mathcal{J}G_m(\lambda)$ and show that it is nonnegative.

For fixed $\mu_1,\dots,\mu_k\in \mathbb{R}$, consider the polynomial
\[
\omega_k(\mu)=\omega(\mu,\mu_1,\dots,\mu_k)=\prod_{i=1}^k(\mu_i-\mu), \quad \mu\in \mathbb{R}.
\]
Then
\[
\frac{1}{\omega_k(\mu)}=\sum_{i=1}^k\frac{1}{\omega_k'(\mu_i)(\mu_i-\mu)}.
\]
Setting $k=m$, $\mu=\lambda^2$, $\mu_i=\lambda_i^2$, $i=1,\dots,m$, we have
\begin{equation}\label{eq24}
\frac{1}{\prod_{i=1}^{m}(1-\lambda^2/\lambda_{i}^2)}
=
\prod_{i=1}^{m}\lambda_{i}^2 \frac{1}{\omega_{m}(\lambda^2)}
=
\prod_{i=1}^{m}\lambda_{i}^2\sum_{i=1}^{m}\frac{1}{\omega_{m}'(\lambda_{i}^2)(\lambda_{i}^2-\lambda^2)}
=
\sum_{i=1}^{m}\frac{A_i}{\lambda_{i}^2-\lambda^2},
\end{equation}
where
\begin{equation}\label{eq25}
\omega_{m}'(\lambda_i^2)=\prod_{\substack{j=1\\ j\ne i}}^{m}(\lambda_{j}^2-\lambda_{i}^2)\quad\mbox{and}\quad A_i=
\frac{\prod_{{j=1}}^{m}\lambda_{j}^2}
{\omega_m'(\lambda_{i}^2)}.
\end{equation}
Note that
\begin{equation}\label{eq26}
\mathrm{sign}\,A_i=(-1)^{i-1}.
\end{equation}

For simplicity,  we set
\[
\Phi_{i}(t):=\varphi_{\lambda_i}(t),\quad i=1,\dots,m,
\]
and observe that $\Phi_{i}(t)$ are eigenfunctions
and $\lambda_{i}^2+\rho^2$ are eigenvalues of the following
 Sturm--Liouville problem on $[0,1]$:
\begin{equation}\label{eq27}
(\Delta(t) u'(t))'+(\lambda^2+\rho^2)\Delta(t)u(t)=0,\quad u'(0)=0,\quad u(\tau)=0.
\end{equation}

Let $\chi(t)$ be the characteristic function of $[0,\tau]$.
In light of
  \eqref{eq2.10} and $\Phi_{i}(\tau)=0$, we have
\[
\int_{0}^{\infty}\Phi_{i}(t)\varphi_{\lambda}(t)\chi(t)\Delta(t)\,dt=
\int_0^{\tau}\Phi_{i}(t)\varphi_{\lambda}(t)\Delta(t)\,dt=
-\frac{\Delta(\tau)\Phi_{i}'(\tau)\varphi_{\lambda}(\tau)}
{\lambda_i^2-\lambda^2},
\]
or, equivalently,
\begin{equation}\label{eq28}
\mathcal{J}\Bigl(-\frac{\Phi_{i}\chi}{\Delta(\tau)\Phi_{i}'(\tau)}\Bigr)(\lambda)=
\frac{\varphi_{\lambda}(\tau)}{\lambda_{i}^2-\lambda^2}.
\end{equation}
It is important to note that
\begin{equation}\label{eq29}
\mathrm{sign}\,\Phi_{i}'(\tau)=(-1)^i.
\end{equation}

Now we examine the following polynomial in eigenfunctions $\Phi_{i}(t)$:
\begin{equation}\label{eq30}
p_{m}(t)=-\frac{1}{\Delta(\tau)}\sum_{i=1}^{m}\frac{A_i}{\Phi_{i}'(\tau)}\,\Phi_{i}(t)=:
\sum_{i=1}^{m}B_i\Phi_{i}(t).
\end{equation}
By virtue of \eqref{eq26} and \eqref{eq29}, we derive that $B_i>0$,
$p_{m}(0)>0$, and $p_{m}(\tau)=0$. Furthermore, because of \eqref{eq24} and \eqref{eq28},
\begin{equation}\label{p-g}
\mathcal{J}(p_{m}\chi)(\lambda)=
\frac{\varphi_{\lambda}(\tau)}{\prod_{i=1}^{m}(1-\lambda^2/\lambda_{i}^2)}=: F_{m}(\lambda).
\end{equation}
Hence,
it suffices to verify that $p_{m}(t)\ge 0$ on $[0,\tau]$.
Define the Vandermonde determinant
$
\Delta(\mu_1,\dots,\mu_k)=\prod_{1\le j<i\le k}^{k}(\mu_i-\mu_j),
$ 
then
\[
\frac{\Delta(\mu_1,\dots,\mu_k)}{\omega_k'(\mu_i)}=
(-1)^{i-1}\Delta(\mu_1,\dots,\mu_{i-1},\mu_{i+1},\dots,\mu_k).
\]
From \eqref{eq24} and \eqref{eq25}, we have
\begin{align}
p_{m}(t)&=-c_{1}\sum_{i=1}^{m}(-1)^{i-1}\Delta(\lambda_1^2,\dots,\lambda_{i-1}^2,\lambda_{i+1}^2,\dots,\lambda_{m}^2)\,
\frac{\Phi_{i}(t)}{\Phi_{i}'(\tau)} \notag\\
&=-c_{1}\begin{vmatrix}
\frac{\Phi_{1}(t)}{\Phi_{1}'(\tau)}&\dots&\frac{\Phi_{m}(t)}{\Phi_{m}'(\tau)}\\
1&\dots&1\\
\lambda_1^2&\dots &\lambda_{m}^2\\
\hdotsfor[2]{3}\\
\lambda_1^{2m-4}&\dots &\lambda_{m}^{2m-4}
\end{vmatrix},\quad c_{1}=\frac{\prod_{{j=1}}^{m}\lambda_{j}^2}{\Delta(\lambda^2_1,\dots,\lambda^2_m)}>0.
\label{eq31}
\end{align}
Here and in what follows, if $m=1$ we consider  only
the $(1,1)$ entries of the matrices.

We now show  that
\begin{equation}\label{eq32}
\begin{vmatrix}
\frac{\Phi_{1}(t)}{\Phi_{1}'(\tau)}&\dots&\frac{\Phi_{m}(t)}{\Phi_{m}'(\tau)}\\
1&\dots&1\\
\lambda_1^2&\dots &\lambda_{m}^2\\
\hdotsfor[2]{3}\\
\lambda_1^{2m-4}&\dots &\lambda_{m}^{2m-4}
\end{vmatrix}=(-1)^{\frac{(m-1)(m-2)}{2}}
\begin{vmatrix}
\frac{\Phi_{1}(t)}{\Phi_{1}'(\tau)}&\dots&\frac{\Phi_{m}(t)}{\Phi_{m}'(\tau)}\\
1&\dots&1\\
\frac{\Phi_{1}^{(3)}(\tau)}{\Phi_{1}'(\tau)}&\dots&\frac{\Phi_{m}^{(3)}(\tau)}{\Phi_{m}'(\tau)}\\
\hdotsfor[2]{3}\\
\frac{\Phi_{1}^{(2m-3)}(\tau)}{\Phi_{1}'(\tau)}&\dots&\frac{\Phi_{m}^{(2m-3)}(\tau)}{\Phi_{m}'(\tau)}
\end{vmatrix}.
\end{equation}
Using \eqref{eq27}, we get
\[
\Phi_{i}''(t)+\Delta'(t)\Delta^{-1}(t)\Phi_{i}'(t)+(\lambda^2_i+\rho^2)\Phi_{i}(t)=0.
\]
By Leibniz's rule,
\[
\Phi_{i}^{(s+2)}(t)+\Delta'(t)\Delta^{-1}(t)\Phi_{i}^{(s+1)}(t)+(s(\Delta'(t)\Delta^{-1}(t))'+\lambda_i^2+\rho^2)
\Phi_{i}^{(s)}(t)
\]
\[
+\sum_{j=1}^{s-1}\binom{s}{j-1}(\Delta'(t)\Delta^{-1}(t))^{(s+1-j)}\Phi_{i}^{(j)}(t),
\]
which implies for $t=\tau$ that
\[
\Phi_{i}^{(s+2)}(\tau)=-\Delta'(\tau)\Delta^{-1}(\tau)\Phi_{i}^{(s+1)}(\tau)-(s(\Delta'(\tau)\Delta^{-1}(\tau))'+\lambda_i^2+\rho^2)
\Phi_{i}^{(s)}(\tau)
\]
\[
-\sum_{j=1}^{s-1}\binom{s}{j-1}(\Delta'(\tau)\Delta^{-1}(\tau))^{(s+1-j)}\Phi_{i}^{(j)}(\tau),\quad \Phi_{i}^{(0)}(\tau)=\Phi_{i}(\tau)=0.
\]
Assuming $s=0,1$ we obtain
\[
\Phi_{i}''(\tau)=-\Delta'(\tau)\Delta^{-1}(\tau)\Phi_{i}'(\tau),
\]
\[ \Phi_{i}'''(\tau)=((\Delta'(\tau)\Delta^{-1}(\tau))^2-(\Delta'(\tau)\Delta^{-1}(\tau))'+\rho^2+\lambda_i^2)\Phi_{i}'(\tau).
\]
By induction, we then derive for $k=0,1,\dots$
\[
\Phi_{i}^{(2k+1)}(\tau)=\Phi_{i}'(\tau)\sum_{j=0}^{k}a_{kj}\lambda_{i}^{2j},\quad
\Phi_{i}^{(2k+2)}(\tau)=\Phi_{i}'(\tau)\sum_{j=0}^{k}b_{kj}\lambda_{i}^{2j},
\]
where $a_{kj}$, $b_{kj}$ depend on $\alpha,\beta,\tau$ and do not depend of $\lambda_i$, and, moreover, $a_{kk}=(-1)^k$.
This yields for
$k=1,2,\dots$ that
\begin{equation}\label{eq34}
\frac{\Phi_{i}^{(2k)}(\tau)}{\Phi_{i}'(\tau)}=\sum_{s=1}^{k}c_{0s}\,
\frac{\Phi_{i}^{(2s-1)}(\tau)}{\Phi_{i}'(\tau)}
\end{equation}
and
\begin{equation}\label{eq35}
\frac{\Phi_{i}^{(2k+1)}(\tau)}{\Phi_{i}'(\tau)}=\sum_{s=1}^{k}c_{1s}\,
\frac{\Phi_{i}^{(2s-1)}(\tau)}{\Phi_{i}'(\tau)}+(-1)^k\lambda_i^{2k},
\end{equation}
where $c_{0s},c_{1s}$ do not depend of $\lambda_i$.
The latter implies \eqref{eq32} since
\[
\begin{vmatrix}
\frac{\Phi_{1}(t)}{\Phi_{1}'(\tau)}&\dots&\frac{\Phi_{m}(t)}{\Phi_{m}'(\tau)}\\
1&\dots&1\\
\frac{\Phi_{1}^{(3)}(\tau)}{\Phi_{1}'(\tau)}&\dots&\frac{\Phi_{m}^{(3)}(\tau)}{\Phi_{m}'(\tau)}\\
\hdotsfor[2]{3}\\
\frac{\Phi_{1}^{(2m-3)}(\tau)}{\Phi_{1}'(\tau)}&\dots&\frac{\Phi_{m}^{(2m-3)}(\tau)}{\Phi_{m}'(\tau)}
\end{vmatrix}=
\begin{vmatrix}
\frac{\Phi_{1}(t)}{\Phi_{1}'(\tau)}&\dots&\frac{\Phi_{m}(t)}{\Phi_{m}'(\tau)}\\
1&\dots&1\\
(-1)\lambda_1^2&\dots &(-1)\lambda_{m}^2\\
\hdotsfor[2]{3}\\
(-1)^{m-2}\lambda_1^{2m-4}&\dots &(-)^{m-2}\lambda_{m}^{2m-4}
\end{vmatrix}.
\]
Further, taking into account \eqref{eq31} and \eqref{eq32}, we derive
\begin{equation}\label{zv}
p_{m}^{(1)}(\tau)=p_{m}^{(3)}(\tau)=\cdots=p_{m}^{(2m-1)}(\tau)=0.
\end{equation}
Therefore, by \eqref{eq30} and \eqref{eq34}, we obtain for $j=1,\dots,m-1$ that
\begin{align*}
p_{m}^{(2j)}(\tau)&=-\frac{1}{\Delta(\tau)}\sum_{i=1}^{m}A_i\,\frac{\Phi_{i}^{(2j)}(\tau)}
{\Phi_{1}'(\tau)}=-\frac{1}{\Delta(\tau)}\sum_{i=1}^{m}A_i
\sum_{s=1}^{j}c_{0s}\,\frac{\Phi_{i}^{(2s-1)}(\tau)}
{\Phi_{1}'(\tau)}\\
&=-\frac{1}{\Delta(\tau)}\sum_{s=1}^{j}c_{0s}
\sum_{i=1}^{m}A_i\,\frac{\Phi_{i}^{(2s-1)}(\tau)}{\Phi_{1}'(\tau)}=
\sum_{s=1}^{j}c_{0s}p^{(2s-1)}(\tau)=0.
\end{align*}
Together with (\ref{zv}) this implies that the zero $t=\tau$ of the polynomial
$p_{m}(t)$ has multiplicity $2m-1$. Then taking into account \eqref{p-g}, the same also holds for $G_m(t)$.

The next step is to prove that $p_{m}(t)$ does not have zeros on $[0,\tau)$ and hence $p_{m}(t)>0$ on $[0,\tau)$, which implies that $G_{m}(t)\ge 0$ for $t\ge0$.
We will use the facts that $\{\Phi_{i}(t)\}_{i=1}^{m}$ is the Chebyshev system on the interval $(0,\tau)$ (see Theorem~\ref{thm-4} below) and
any polynomial of degree $m$ on $(0,\tau)$ has at most $m-1$ zeros, counting multiplicity.

We  consider the polynomial
\begin{equation}\label{eq37}
p(t,\varepsilon)=
\begin{vmatrix}
\frac{\Phi_{1}(t)}{\Phi_{1}'(\tau)}&\dots&\frac{\Phi_{m}(t)}{\Phi_{m}'(\tau)}\\
\frac{\Phi_{1}(\tau-\varepsilon)}{(-\varepsilon)\Phi_{1}'(\tau)}&\dots&
\frac{\Phi_{m}(\tau-\varepsilon)}{(-\varepsilon)\Phi_{m}'(\tau)}\\
\frac{\Phi_{1}(\tau-2\varepsilon)}{(-2\varepsilon)^3\Phi_{1}'(\tau)}&\dots&
\frac{\Phi_{m}(\tau-2\varepsilon)}{(-2\varepsilon)^3\Phi_{m}'(\tau)}\\
\hdotsfor[2]{3}\\
\frac{\Phi_{1}(\tau-(m-1)\varepsilon)}{(-(m-1)\varepsilon)^{2m-3}\Phi_{1}'(\tau)}&\dots&
\frac{\Phi_{m}(\tau-(m-1)\varepsilon)}{(-(m-1)\varepsilon)^{2m-3}\Phi_{m}'(\tau)}
\end{vmatrix}.
\end{equation}
For any $0<\varepsilon<\tau/(m-1)$, it has $m-1$ zeros at the points
$t_j=\tau-j\varepsilon$, $j=1,\dots,m-1$.
Letting $\varepsilon\to
0$, we observe that
the polynomial $\lim\limits_{\varepsilon\to 0}p(t,\varepsilon)$ does not have zeros on $(0,\tau)$.
If we demonstrate that
\begin{equation}\label{zv1}
\lim\limits_{\varepsilon\to 0}p(t,\varepsilon)=c_2p_{m}(t),
\end{equation}
with some $c_2>0$, then there holds that 
the polynomial $p_{m}(t)$ is strictly positive on $[0,\tau)$.

To prove \eqref{zv1}, we use  Taylor's formula:  for $j=1,\dots,m-1$,
\[
\frac{\Phi_{i}(\tau-j\varepsilon)}{(-j\varepsilon)^{2j-1}\Phi_{i}'(\tau)}=
\sum_{s=1}^{2j-2}\frac{\Phi_{i}^{(s)}(\tau)}{s!\,(-j\varepsilon)^{2j-1-s}\Phi_{i}'(\tau)}+
\frac{\Phi_{i}^{(2j-1)}(\tau)+o(1)}{(2j-1)!\,\Phi_{i}'(\tau)}.
\]
Using formulas \eqref{eq34} and \eqref{eq35} and progressively subtracting the row $j$ from the row $j-1$
in the determinant \eqref{eq37}, we have
\[
p(t,\varepsilon)=\frac{1}{\prod_{j=1}^{m-1}(2j-1)!}
\begin{vmatrix}
\frac{\Phi_{1}(t)}{\Phi_{1}'(\tau)}&\dots&\frac{\Phi_{m}(t)}{\Phi_{m}'(\tau)}\\
1+o(1)&\dots&
1+o(1)\\
\frac{\Phi_{1}^{(3)}(\tau)+o(1)}{\Phi_{1}'(\tau)}&\dots&
\frac{\Phi_{m}^{(3)}(\tau)+o(1)}{\Phi_{m}'(\tau)}\\
\hdotsfor[2]{3}\\
\frac{\Phi_{1}^{(2m-3)}(\tau)+o(1)}{\Phi_{1}'(\tau)}&\dots&
\frac{\Phi_{m}^{(2m-3)}(\tau)+o(1)}{\Phi_{m}'(\tau)}
\end{vmatrix}.
\]
Finally, in light of  \eqref{eq31} and \eqref{eq32}, we arrive at \eqref{zv1}.

\subsection*{Monotonicity of $G_m$}
The polynomial $p(t,\varepsilon)$ vanishes at $m$ points:
$t_{j}=\tau-j\varepsilon$, $j=1,\dots,m-1$, and $t_{m}=\tau$, thus its derivative
$p'(t,\varepsilon)$ has $m-1$ zeros on the interval $(\tau-\varepsilon, \tau)$.

In virtue of \eqref{eq2.7},
\[
\Phi_{i}'(t)=-\frac{(\rho^{2}+\lambda_i^{2})\sh t\ch
t}{2(\alpha+1)}\,\varphi_{\lambda_i}^{(\alpha+1,\beta+1)}(t),\quad
t\in [0,\tau].
\]

This and Theorem~\ref{thm-4} imply that 
$\{\Phi_{i}'(t)\}_{i=1}^{m}$ is the Chebyshev system on
$(0,\tau)$. Therefore, $p'(t,\varepsilon)$ does not have zeros on
$(0,\tau-\varepsilon]$. Then for $\varepsilon\to 0$ we derive that
$p_{m}'(t)$ does not have zeros on $(0,\tau)$. Since $p_{m}(0)>0$ and $p_{m}(\tau)=0$, then $p_{m}'(t)<0$ on $(0,\tau)$.
Thus,
$p_{m}(t)$ and $G_m(t)$ are decreasing on the interval
$[0,\tau]$.

\subsection*{Uniqueness of the extremizer $f_{m}$}
We will use Lemmas \ref{lem-6} and \ref{lem-7}. Let $f(\lambda)$ be an extremizer and $\Lambda_{m}(f)=\lambda_m$. Consider the functions
\[
F(\lambda)=\omega_{\alpha}(\lambda)f(\lambda),\quad
\Omega(\lambda)=\omega_{\alpha}(\lambda)f_{m}(\lambda),
\]
where $f_{m}$ is defined in \eqref{extr} and $\omega_{\alpha}$ is given in Lemma \ref{lem-6}.

Note that all zeros of $\Omega(\lambda)$ are also zeros of $F(\lambda)$.
Indeed, we have $(-1)^{m-1}f(\lambda)\le 0$ for $\lambda\ge \lambda_{m}$ and
$f(\lambda_{m})=0$ (otherwise $\Lambda_{m}(f)<\lambda_{m}$, which is a contradiction). This and \eqref{eq22} imply that
the points $\lambda_{s}$, $s\ge m+1$, are double zeros of $f$. By \eqref{eq23}, we also have that $f(\lambda_{s})=0$
for $s=1,\dots,m-1$ and therefore the function $f$ has zeros (at least, of order one) at the points $\lambda_{s}$, $s=1,\dots,m$.

Using asymptotic relations given in Lemma \ref{lem-6}, we derive that $F(\lambda)$ is the entire
function of exponential type, integrable on real line and therefore bounded.
Taking into account \eqref{eq2.3} and Lemma \ref{lem-6}, we get
\[
|\Omega(iy)|\asymp y^{-2m}e^{4y},\quad y\to +\infty.
\]
Now using
Lemma \ref{lem-7}, we arrive at
$f(\lambda)=q(\lambda)f_{m}(\lambda)$, where $q(\lambda)$ is an even polynomial of degree at most $2m$.
Note that the degree cannot be $2s$, $s=1,\dots,m$, since in this case \eqref{eq2.3} implies that $f\notin
L^1(\mathbb{R}_{+},\lambda^{2m-2}\,d\sigma)$. Thus,
$f(\lambda)=cf_{m}(\lambda)$, $c>0$.
\end{proof}


\section{Generalized Logan problem for Fourier transform on hyperboloid }\label{sec-Hyp}

We will use some facts of harmonic analysis on hyperboloid $\mathbb{H}^{d}$
and Lobachevskii space from \cite[Chapt.~X]{Vil78}.

Let $d\in \mathbb{N}$, $d\geq 2$, and suppose that $\mathbb{R}^{d}$ is $d$-dimensional real
Euclidean space with inner product $(x,y)=x_{1}y_{1}+\dots+x_{d}y_{d}$, and norm $|x|=\sqrt{(x,x)}$. As usual,
\[
\mathbb{S}^{d-1}=\{x\in\mathbb{R}^{d}\colon |x|=1\}
\]
is the Euclidean sphere, $\mathbb{R}^{d,1}$ is $(d+1)$-dimensional real
pseudo-Euclidean space with bilinear form $[x,y]=-x_{1}y_{1}-\dots-x_{d}y_{d}+x_{d+1}y_{d+1}$.
The upper sheet of two sheets hyperboloid is defined by
\[
\mathbb{H}^{d}=\{x\in \mathbb{R}^{d,1}\colon [x,x]=1,\,x_{d+1}>0\}
\]
and
\[
d(x,y)=\arcch{}[x,y]=\ln{}([x,y]+\sqrt{[x,y]^2-1})
\]
is the distance between $x,y\in \mathbb{H}^{d}$.

The pair $\bigl(\mathbb{H}^{d},d({\cdot},{\cdot})\bigr)$ is known as the Lobachevskii space.
Let $o=(0,\dots,0,1)\in\mathbb{H}^{d}$,  
and let $B_{r}=\{x\in \mathbb{H}^{d}\colon d(o,x)\leq r\}$ be the ball. 

In this section we will use the  Jacobi transform with parameters
$(\alpha,\beta)=(d/2-1,-1/2)$.
In particular,
\[
d\mu(t)=\Delta(t)\,dt=2^{d-1}\sh^{d-1}t\,dt,
\]
\[
d\sigma(\lambda)=s(\lambda)\,d\lambda=2^{3-2d}
\Gamma^{-2}\Bigl(\frac{d}{2}\Bigr)\Bigl|\frac{\Gamma\bigl(\frac{d-1}{2}+i\lambda\bigr)}{\Gamma(i\lambda)}\Bigr|^2d\,\lambda.
\]

For $t>0$, $\zeta\in \mathbb{S}^{d-1}$, $x=(\sh t\, \zeta,\ch t)\in \mathbb{H}^{d}$, we let
\[
d\omega(\zeta)=\frac{1}{|\mathbb{S}^{d-1}|}\,d\zeta, \quad
d\eta(x)=d\mu(t)\,d\omega(\zeta)
\]
be the Lebesgue measures on $\mathbb{S}^{d-1}$ and $\mathbb{H}^{d}$, respectively.
Note that $d\omega$ is the probability measure on the sphere, invariant under rotation group $SO(d)$
and the measure $d\eta$ is invariant under hyperbolic rotation group $SO_0(d,1)$.

For $\lambda\in \mathbb{R}_{+}=[0, \infty)$, $\xi\in \mathbb{S}^{d-1}$, $y=(\lambda, \xi)\in \mathbb{R}_{+}\times \mathbb{S}^{d-1}=:\widehat{\mathbb{H}}^{d}$, we let
\[
d\hat{\eta}(y)=d\sigma(\lambda)\,d\omega(\xi).
\]
%

Harmonic analysis in $L^{2}(\mathbb{H}^{d}, d\eta)$ and $L^{2}(\widehat{\mathbb{H}}^{d}, d\hat{\eta})$ is based on 
the direct and inverse (hyperbolic) Fourier transforms
\begin{equation*}
\mathcal{F}g(y)=\int_{\mathbb{H}^{d}}g(x)[x,\xi']^{-\frac{d-1}{2}-i\lambda}\,d\eta(x),
\end{equation*}
\begin{equation*}
\mathcal{F}^{-1}f(x)=\int_{\widehat{\mathbb{H}}^{d}}f(y)[x,\xi']^{-\frac{d-1}{2}+i\lambda}\,d\hat{\eta}(y),
\end{equation*}
where $\xi'=(\xi, 1)$, $\xi\in \mathbb{S}^{d-1}$.
We stress that  the kernels of the Fourier transforms are unbounded, which cause additional difficulties. 

If $f\in L^{2}(\mathbb{H}^{d}, d\eta)$, $g\in L^{2}(\widehat{\mathbb{H}}^{d}, d\hat{\eta})$, then
\[
\mathcal{F}g\in L^{2}(\widehat{\mathbb{H}}^{d}, d\hat{\eta}),\quad \mathcal{F}^{-1}(f)\in
L^{2}(\mathbb{H}^{d}, d\eta),
\]
and $g(x)=\mathcal{F}^{-1}(\mathcal{F}g)(x)$,
$f(y)=\mathcal{F}(\mathcal{F}^{-1}f)(y)$ in the mean-square sense. The  Plancherel formulas are written as follows:
\[
\int_{\mathbb{H}^{d}}|g(x)|^2\,d\eta(x)=\int_{\widehat{\mathbb{H}}^{d}}
|\mathcal{F}g(y)|^2\,d\hat{\eta}(y),
\]
\[
\int_{\widehat{\mathbb{H}}^{d}}|f(y)|^2\,d\hat{\eta}(y)=\int_{\mathbb{H}^{d}}
|\mathcal{F}^{-1}f(x)|^2\,d\eta(x).
\]


The Jacobi function
$\varphi_{\lambda}(t)=\varphi_{\lambda}^{(d/2-1,-1/2)}(t)$ is obtained by
averaging over the sphere of Fourier transform kernels
\[
\varphi_{\lambda}(t)=\int_{\mathbb{S}^{d-1}}[x,\xi']^{-\frac{d-1}{2}\pm i\lambda}\,d\omega(\xi),
\]
where $x=(\sh t\,\zeta,\ch t)$, $\zeta\in \mathbb{S}^{d-1}$, $\xi'=(\xi,1)$.
We note that spherical functions $g(x)=g_0(d(o,x))=g_0(t)$ and $f(y)=f_0(\lambda)$
 satisfy
\[
\mathcal{F}g(y)=\mathcal{J}g_{0}(\lambda),\quad \mathcal{F}^{-1}f(x)=\mathcal{J}^{-1}f_{0}(t).
\]

{To pose $m$-Logan problem in the case of the hyperboloid,}
let $f(y)$ be a real-valued continuous  function on $\widehat{\mathbb{H}}^{d}$, $y=(\lambda, \xi)$ and let
\[
\Lambda(f)= \Lambda(f,\widehat{\mathbb{H}}^{d})=\sup\,\{\lambda>0\colon f(y)=f(\lambda,\xi)>0,\ \xi\in\mathbb{S}^{d-1}\}
\]
{and, as above, $\Lambda_{m}(f)=\Lambda((-1)^{m-1}f)$, $m\in\mathbb{N}$.}


Consider the class $\mathcal{L}_{m}(\tau,\widehat{\mathbb{H}}^{d})$ of real-valued functions $f$
on $\widehat{\mathbb{H}}^{d}$ such that

\smallbreak
(1) $f\in L^1(\widehat{\mathbb{H}}^{d}, \lambda^{2m-2}\,d\hat{\eta}(\lambda))\cap C_b(\widehat{\mathbb{H}}^{d})$, \ $f\ne 0$, \
$\mathcal{F}^{-1}f\ge 0$, \ $\mathrm{supp}\,\mathcal{F}^{-1}f\subset B_{2\tau}$;

\smallbreak
(2) $\int_{\widehat{\mathbb{H}}^{d}}\lambda^{2k}f(y)\,d\hat{\eta}(y)=0$, \ $k=0,1,\dots,m-1$.


\begin{probE}
Find 
\[
L_m(\tau, \widehat{\mathbb{H}}^{d})=\inf \{\Lambda_{m}(f)\colon f\in \mathcal{L}_{m}(\tau,\widehat{\mathbb{H}}^{d})\}.
\]
\end{probE}

Let us show that in the generalized Logan problem on hyperboloid, one can restrict oneself
to only spherical functions depending on $\lambda$.

If a function $f\in \mathcal{L}_{m}(\tau,\widehat{\mathbb{H}}^{d})$ and
$y=(\lambda, \xi)\in \widehat{\mathbb{H}}^{d}$,
then the function
\[
f_{0}(\lambda)=\int_{\mathbb{S}^{d-1}}f(y)\,d\omega(\xi)
\]
satisfies the following properties:

\smallbreak
(1) $f_0\in L^1(\mathbb{R}_{+}, \lambda^{2m-2}d\sigma)\cap C_b(\mathbb{R}_{+}), \ f_0\ne 0, \
\mathcal{J}^{-1}f_{0}(t)\ge 0, \ \mathrm{supp}\,\mathcal{J}^{-1}f_0\subset [0,2\tau]$;

\smallbreak
(2) $\int_{0}^{\infty}\lambda^{2k}f_{0}(\lambda)\,d\sigma(\lambda)=0$, \ $k=0,1,\dots,m-1$;

\smallbreak
(3) $\Lambda_m(f_{0},\mathbb{R}_{+})=\Lambda_m(f,\widehat{\mathbb{H}}^{d})$.

\smallbreak
By Paley--Wiener theorem (see Lemma~\ref{lem-3}) $f_{0}\in
\mathcal{B}_1^{2\tau}$,
\[
f_0(\lambda)=\int_{0}^{2\tau}\mathcal{J}^{-1}f_0(t)\varphi_{\lambda}(t)\,d\mu(t)
\]
and $f_0\in\mathcal{L}_{m}(\tau,\mathbb{R}_{+})$. Hence, $L_m(\tau, \widehat{\mathbb{H}}^{d})=L_m(\tau, \mathbb{R}_{+})$
and from Theorem~\ref{thm-1} we derive the following result.

\begin{thm}
If $d,m\in\mathbb{N}$, $\tau>0$, $\lambda_1(\tau)<\dots<\lambda_m(\tau)$ are
the zeros of $\varphi_{\lambda}^{(d/2-1,-1/2)}(\tau)$, then
\[
L_m(\tau, \widehat{\mathbb{H}}^{d})=\lambda_m(\tau).
\]
The extremizer
\[
f_m(y)=
\frac{(\varphi_{\lambda}^{(d/2-1,-1/2)}(\tau))^2}{(1-\lambda^2/\lambda_1^2(\tau))\cdots(1-\lambda^2/\lambda_m^2(\tau))},
\quad y=(\lambda, \xi)\in\widehat{\mathbb{H}}^{d},
\]
is {unique} in the class of spherical functions up to multiplication by a positive constant.
\end{thm}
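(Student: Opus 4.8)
The strategy is to reduce the $d$-dimensional Logan problem on the hyperboloid to the one-dimensional Jacobi problem already solved in Theorem~\ref{thm-1}, exploiting the rotational invariance of both the functional $\Lambda_m$ and the class $\mathcal{L}_m(\tau,\widehat{\mathbb{H}}^d)$. The key observation, made in the text preceding the statement, is that averaging a function $f\in\mathcal{L}_m(\tau,\widehat{\mathbb{H}}^d)$ over the sphere $\mathbb{S}^{d-1}$ in the angular variable $\xi$ produces a radial (spherical) function $f_0(\lambda)$ that lies in $\mathcal{L}_m(\tau,\mathbb{R}_+)$ and satisfies $\Lambda_m(f_0,\mathbb{R}_+)=\Lambda_m(f,\widehat{\mathbb{H}}^d)$; conversely, the extremizer $f_m$ of the one-dimensional problem, viewed as a function of $\lambda$ only, is admissible in $\mathcal{L}_m(\tau,\widehat{\mathbb{H}}^d)$. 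These two facts together give $L_m(\tau,\widehat{\mathbb{H}}^d)=L_m(\tau,\mathbb{R}_+)=\lambda_m(\tau)$ and identify $f_m(y)=(\varphi_\lambda^{(d/2-1,-1/2)}(\tau))^2/\prod_{k=1}^m(1-\lambda^2/\lambda_k^2(\tau))$ as an extremizer.

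First I would verify that $f_0$ satisfies properties (1)--(3) listed in the excerpt. Property (3), $\Lambda_m(f_0,\mathbb{R}_+)=\Lambda_m(f,\widehat{\mathbb{H}}^d)$, needs a short argument: since $\varphi_\lambda^{(d/2-1,-1/2)}$ is obtained by averaging the unbounded Fourier kernels $[x,\xi']^{-\frac{d-1}{2}\pm i\lambda}$ over the sphere, the inverse Jacobi transform of $f_0$ is the spherical average of $\mathcal{F}^{-1}f$, hence nonnegative with support in $[0,2\tau]$, giving property (1); property (2) follows by integrating the moment conditions against $d\omega(\xi)$. For property (3) one uses that $f(\lambda,\xi)\le 0$ for all $\xi$ when $(-1)^{m-1}$ is inserted and $\lambda>\Lambda_m(f)$, so the average inherits the sign, giving $\Lambda_m(f_0)\le\Lambda_m(f)$; the reverse inequality is immediate because a spherical function is its own average, but here one needs that no cancellation in the average can destroy positivity — this is where positive definiteness of admissible functions (established along the lines of Theorem~\ref{thm-1}) is used to control the sign structure. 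The Paley--Wiener theorem (Lemma~\ref{lem-3}) then places $f_0$ in $\mathcal{B}_1^{2\tau}$, so $f_0\in\mathcal{L}_m(\tau,\mathbb{R}_+)$.

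Next, invoking Theorem~\ref{thm-1}, $\Lambda_m(f_0,\mathbb{R}_+)\ge\lambda_m(\tau)$, so $L_m(\tau,\widehat{\mathbb{H}}^d)\ge\lambda_m(\tau)$; and the spherical function built from $f_m$ of \eqref{extr} — which equals the displayed $f_m(y)$ because $\varphi_\lambda(\tau)\varphi_\lambda(\tau)=(\varphi_\lambda^{(d/2-1,-1/2)}(\tau))^2$ — belongs to $\mathcal{L}_m(\tau,\widehat{\mathbb{H}}^d)$ with $\Lambda_m(f_m)=\lambda_m(\tau)$, giving the matching upper bound. Uniqueness in the class of spherical functions follows directly from the uniqueness part of Theorem~\ref{thm-1}, since a spherical admissible function on $\widehat{\mathbb{H}}^d$ corresponds bijectively to an admissible function in $\mathcal{L}_m(\tau,\mathbb{R}_+)$.

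The main obstacle is handling the \emph{unboundedness of the Fourier kernels} $[x,\xi']^{-\frac{d-1}{2}\pm i\lambda}$ on the hyperboloid: unlike in the Euclidean case, one must be careful that the spherical averaging $f\mapsto f_0$ commutes with the inverse transform and preserves integrability against $\lambda^{2m-2}\,d\hat\eta$, and that $\mathcal{F}^{-1}f_0=$ (spherical average of $\mathcal{F}^{-1}f$) holds pointwise with the correct support. I would address this by working with the $L^1\cap C_b$ hypotheses in property (1) of the class, applying Fubini's theorem to justify interchanging the angular average with the (convergent) transform integral, and then appealing to the Paley--Wiener characterization to recover the compact-support and entire-function properties on the reduced one-dimensional side.
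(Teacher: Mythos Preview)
Your proposal is correct and follows exactly the paper's approach: reduce to the one-dimensional Problem~D by spherical averaging, check that $f_0\in\mathcal{L}_m(\tau,\mathbb{R}_+)$ via Lemma~\ref{lem-3}, and invoke Theorem~\ref{thm-1}. One minor point: you do not need equality in property~(3) --- only the inequality $\Lambda_m(f_0,\mathbb{R}_+)\le\Lambda_m(f,\widehat{\mathbb{H}}^d)$ (which you derive correctly from averaging the sign condition) is required for the lower bound, while the upper bound comes from testing the radial $f_m$ directly, so the aside about positive definiteness controlling cancellation in the average is unnecessary.
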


 \section{Number of zeros of positive definite function}\label{sec-JPD}
In  \cite{Lo83c}, it was proved that
$[0,\pi n/4\tau]$ is the minimal interval
 containing  not less $n$ zeros of functions from the class \eqref{eq1.1}.
Moreover, in this case
\[
F_{n}
(x)=\Bigl(\cos \frac{2\tau x}{n}\Bigr)^n
\]
is the unique extremal function.

Note that $x=\pi n/4\tau$ is a unique zero of $F_{n}$ on $[0,\pi n/4\tau]$ of
 multiplicity $n$.
 Moreover, the functions $F_{n}(\pi n(x-1/4\tau))$ for $n=1$ and $3$
 coincide, up to constants, with the cosine Fourier transform of $f_1$ and $f_2$ (see Introduction) on $[0,1]$.

In this section we study a similar problem for the Jacobi transform $\mathcal{J}$
with $\alpha\ge\beta\ge -1/2$, $\alpha>-1/2$. For the Bessel transform, this question was investigated in \cite{GIT19}.
We will use the approach which was developed in Section~\ref{sec-LJ}.
The key argument in the proof is based on
the properties of the polynomial $p_{m}(t)$ defined in
\eqref{eq30}.

Recall that $N_{I}(g)$ stands from the number of zeros of $g$ on interval
$I\subset \mathbb{R}_{+}$, counting multiplicity and $\lambda_m(t)$,
 $\lambda_m^{*}(t)$ are the zeros of functions $\varphi_{\lambda}(t)$ (see~\eqref{eq1.3}) and $\psi_{\lambda}'(t)$ (see \eqref{eq2.6}), respectively,
and, moreover,
$t_m(\lambda)$, $t_m^{*}(\lambda)$ are inverse functions for $\lambda_m(t)$ and
$\lambda_m^{*}(t)$.

We say that  $g\in\mathcal{L}^+_\gamma$, $\gamma>0$, if
\begin{equation}\label{eq4.1}
g(t)=\int_{0}^{\gamma}\varphi_{\lambda}(t)\,d\nu(\lambda),\quad g(0)>0,
\end{equation}
with a nonnegative bounded Stieltjes measure $d\nu$. Note that the function
$g(t)$ is analytic on $\mathbb{R}$ {but not entire}.

We set, {for $g\in \mathcal{L}^+_\gamma$},
\[
\mathrm{L}\,(g,n):=\inf{}\{L>0\colon N_{[0,L]}(g)\ge n\},\quad n\in \mathbb{N}.
\]

\begin{thm}\label{thm-2}
We have
\begin{equation}\label{eq4.2}
\inf_{g\in\mathcal{L}^+_\gamma} \mathrm{L}\,(g,n)\le \theta_{n,\gamma}=
\begin{cases}
t_{m}(\gamma),& n=2m-1,\\ t_{m}^{*}(\gamma),& n=2m.
\end{cases}
\end{equation}
Moreover, there exists a positive definite function
$G_{n}\in\mathcal{L}^+_\gamma$ such that $\mathrm{L}\,(G_{n},n)= \theta_{n,\gamma}$.
\end{thm}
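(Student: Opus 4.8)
The plan is to establish the stronger second assertion, which implies \eqref{eq4.2} since $\inf_{g\in\mathcal{L}^+_\gamma}\mathrm{L}(g,n)\le\mathrm{L}(G_n,n)$. For each $n$ I would construct $G_n$ as a finite combination $\sum_k c_k\varphi_{\lambda_k}$ of Jacobi functions with positive coefficients and all frequencies $\lambda_k\le\gamma$, one of them equal to $\gamma$: such a $G_n$ is automatically of the form \eqref{eq4.1} with $d\nu=\sum_k c_k\delta_{\lambda_k}$, it is positive definite (see the discussion of positive-definiteness in Section~\ref{sec-JH}), and it is analytic on $\mathbb{R}$ but not entire. I would then arrange that $G_n>0$ on $[0,\theta_{n,\gamma})$ and that $G_n$ vanishes to order at least $n$ at $t=\theta_{n,\gamma}$; then $N_{[0,L]}(G_n)=0<n$ for $L<\theta_{n,\gamma}$ while $N_{[0,\theta_{n,\gamma}]}(G_n)\ge n$, so $\mathrm{L}(G_n,n)=\theta_{n,\gamma}$, as required.

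For odd $n=2m-1$ almost everything is already in Section~\ref{sec-LJ}. I would take $\tau:=t_m(\gamma)$, so that $\gamma=\lambda_m(\tau)$, and set $G_{2m-1}:=p_m$, the polynomial in Jacobi functions from \eqref{eq30} built with this $\tau$. There it is shown that $p_m=\sum_{i=1}^m B_i\varphi_{\lambda_i(\tau)}$ with $B_i>0$, that $p_m(0)>0$, that $p_m$ is strictly decreasing and strictly positive on $[0,\tau)$, and that $t=\tau$ is a zero of $p_m$ of multiplicity $2m-1$. Since $0<\lambda_1(\tau)<\dots<\lambda_m(\tau)=\gamma$, this gives $G_{2m-1}\in\mathcal{L}^+_\gamma$ and $\mathrm{L}(G_{2m-1},2m-1)=\tau=\theta_{2m-1,\gamma}$.

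For even $n=2m$ I would set $\tau:=t_m^*(\gamma)$, so that $\gamma=\lambda_m^*(\tau)$ is the $m$-th positive zero of $\psi_\lambda'(\tau)$, put $\mu_0:=0$, $\mu_k:=\lambda_k^*(\tau)$ for $k=1,\dots,m$, and $\psi_{\mu_0}:=1$, and look for $G_{2m}:=\varphi_0\sum_{k=0}^m c_k\psi_{\mu_k}=\sum_{k=0}^m c_k\varphi_{\mu_k}$ (using $\varphi_{\mu_k}=\varphi_0\psi_{\mu_k}$). The coefficients should be chosen so that $q_m:=\sum_k c_k\psi_{\mu_k}$ vanishes to order $2m$ at $t=\tau$; since $\psi_{\mu_k}'(\tau)=0$, differentiating the Sturm--Liouville equation \eqref{eq2.7} repeatedly — exactly as in the derivation of \eqref{eq34}--\eqref{eq35} — shows that this is equivalent to the Vandermonde-type system $\sum_{k=0}^m c_k\psi_{\mu_k}(\tau)\mu_k^{2\ell}=0$, $\ell=0,1,\dots,m-1$. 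Its solution is one-dimensional with signs alternating in $k$ (as $\mu_0<\dots<\mu_m$), while the interlacing \eqref{eq2.8} together with $\varphi_0(\tau)>0$ gives $\operatorname{sign}\psi_{\mu_k}(\tau)=(-1)^k$; choosing the free constant appropriately therefore makes all $c_k>0$. Then $G_{2m}$ is a positive combination of $\varphi_{\mu_0},\dots,\varphi_{\mu_m}$ with $\mu_m=\gamma$ and $G_{2m}(0)=\sum_k c_k>0$, so $G_{2m}\in\mathcal{L}^+_\gamma$, and it suffices to show that $q_m$ — hence $G_{2m}$, since $\varphi_0>0$ and (as $\rho>0$, by \eqref{eq2.9}) decreasing on $(0,\tau)$ — is strictly positive on $[0,\tau)$; this yields $\mathrm{L}(G_{2m},2m)=\tau=\theta_{2m,\gamma}$, and combining the two cases proves the theorem.

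The main obstacle is this last step: ruling out further zeros of $q_m$ on $[0,\tau)$. I would handle it exactly as $p_m$ is handled in the subsections ``Positive definiteness of $F_m$'' and ``Monotonicity of $G_m$'' of Section~\ref{sec-LJ}: using \eqref{eq2.7} I would form the Neumann analogue of the perturbed determinant \eqref{eq37}, whose rows sample $\psi_{\mu_k}$ at $\tau-j\varepsilon$, $j=0,\dots,m-1$, so that it has $m-1$ simple zeros in $(\tau-\varepsilon,\tau)$ and, via \eqref{eq34}--\eqref{eq35}, tends to a positive multiple of $q_m$ as $\varepsilon\to0$ (cf.\ \eqref{zv1}); because $\{\psi_{\lambda_k^*(\tau)}'(t)\}_{k=1}^\infty$ and $\{\psi_{\lambda_k^*(\tau)}(t)-\psi_{\lambda_k^*(\tau)}(\tau)\}_{k=1}^\infty$ are Chebyshev systems on $(0,\tau)$ (Theorem~\ref{thm-5}(ii)), the derivative of this determinant has no zero on $(0,\tau-\varepsilon]$, so in the limit $q_m'$ has no zero on $(0,\tau)$, and since $q_m(0)>0=q_m(\tau)$ this forces $q_m'<0$, hence $q_m>0$, on $[0,\tau)$ (and $G_{2m}$ decreasing on $[0,\tau]$, as in Remark~\ref{rem-zam}). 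Setting up the row operations in this determinant and identifying its limit is where the real work lies, precisely because the modified weight $\Delta_*=\varphi_0^2\Delta$ behaves near $t=\tau$ quite differently than in the classical Fourier case; the remaining points — the sign of the $c_k$, membership in $\mathcal{L}^+_\gamma$, and positive definiteness — are routine.
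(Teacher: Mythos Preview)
Your proposal is correct and follows essentially the same route as the paper. The odd case is identical, and in the even case the paper likewise takes $G_{2m}=\varphi_0\cdot r_m$ with $r_m=1+\sum_{i=1}^m B_i^*\psi_{\lambda_i^*(\tau)}$ (the positive coefficients arising from a partial-fraction identity \eqref{eq4.3} rather than from your direct Vandermonde system, but yielding the same one-dimensional family), shows via the Sturm--Liouville recursion that $t=\tau$ is a zero of order $2m$, and rules out further zeros on $[0,\tau)$ by the same perturbed-determinant/Chebyshev-system argument you outline, invoking Theorem~\ref{thm-5}.
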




\begin{proof} Put $\tau:=t_m(\gamma)$.
First, let $n=2m-1$. Consider the polynomial (see \eqref{eq30})
\[
G_{n}(t)=
\sum_{i=1}^mB_i(\tau)\varphi_{\lambda_i(\tau)}(t),\quad t\in \mathbb{R}_{+},
\]
constructed in Theorem~\ref{thm-1}. It has positive coefficients $B_i(\tau)$ and the unique zero $t=\tau$ of multiplicity
$2m-1$ on the interval $[0,\tau]$.
Hence, $G_{n}$ is of the form \eqref{eq4.1}, positive definite, and such that $t=\tau$ is a unique zero of multiplicity $2m-1$
on the interval $[0,\tau]$. Therefore,
\[
\mathrm{L}\,(G_n,2m-1)\le \tau.
\]

\smallbreak
Second, let $n=2m$, $\lambda_{i}^{*}:=\lambda_{i}^{*}(\tau)$. As in Theorem~\ref{thm-1} we define numbers
$A_i^{*}:=A_i^{*}(\tau)$ from the relation
\begin{equation}\label{eq4.3}
\sum_{i=1}^{m}\frac{A_i^{*}}{\lambda_{i}^{*\,2}-\lambda^2}=
\frac{1}{\prod_{i=1}^{m}(1-\lambda^2/\lambda_{i}^{*\,2})}.
\end{equation}
Recall that $\mathrm{sign}\,A_i^{*}=(-1)^{i-1}$. Set
\[
\Psi_i(t):=\psi_{\lambda_{i}^{*}}(t)-\psi_{\lambda_{i}^{*}}(\tau),\quad i=1,\dots,m,
\]
where $\psi_{\lambda}(t)$ is defined in \eqref{eq2.6}. In view of \eqref{eq2.7}, $\psi_{\lambda_{i}^{*}}(t)$ are eigenfunctions
and $\lambda_{i}^{*\,2}$ are eigenvalues of the following Sturm--Liouville problem on $[0,\tau]$:
\[
(w(t)u'(t))'+\lambda^2w(t)u(t)=0,\quad u'(0)=0,\quad u'(\tau)=0,
\]
where the weight $w(t)=\varphi_0^2(t)\Delta(t)$. Since $\Psi_i'(\tau)=0$, then from equation
\begin{equation}\label{eq4.4}
(w(t)\Psi_i'(t)(t))'+\lambda^2w(t)\psi_{\lambda_{i}^{*}}(t)=0
\end{equation}
it follows $\Psi_i''(\tau)=-\lambda_{i}^{*\,2}\psi_{\lambda_{i}^{*}}(\tau)$.

Let us consider the polynomial
\begin{equation}\label{eq4.5}
r_m(t)=\sum_{i=1}^{m}A_i^{*}\,\frac{\Psi_i(t)}{\Psi_i''(\tau)}=:\sum_{i=1}^{m}B_i^{*}\Psi_i(t).
\end{equation}
By \eqref{eq2.8}, $\mathrm{sign}\,\psi_{\lambda_{i}^{*}}(\tau)=(-1)^i$, hence, $B_i^{*}>0$, $r_m(0)>0$, $r_m(\tau)=0$.

Let us show that at the point $t=\tau$ polynomial $r_m(t)$ has zero of order $2m$.
As in Theorem~\ref{thm-1},
\begin{align}
r_m(t)&=c_1\sum_{i=1}^{m}(-1)^{i-1}\Delta(\lambda_1^{*\,2},\dots,\lambda_{i-1}^{*\,2},\lambda_{i+1}^{*\,2},\dots,\lambda_{m}^{*\,2})\,
\frac{\Psi_i(t)}{\Psi_i''(\tau)}\notag\\
&=c_1\begin{vmatrix}
\frac{\Psi_{1}(t)}{\Psi_{1}''(\tau)}&\dots&\frac{\Psi_{m}(t)}{\Psi_{m}''(\tau)}\\
1&\dots&1\\
\lambda_{1}^{*\,2}&\dots &\lambda_{m}^{*\,2}\\
\hdotsfor[2]{3}\\
\lambda_{1}^{*\,(2m-2)}&\dots &\lambda_{m}^{*\,(2m-2)}
\end{vmatrix},\quad c_1>0.
\label{eq4.6}
\end{align}

Show that
\begin{equation}\label{eq4.7}
\begin{vmatrix}
\frac{\Psi_{1}(t)}{\Psi_{1}''(\tau)}&\dots&\frac{\Psi_{m}(t)}{\Psi_{m}''(\tau)}\\
1&\dots&1\\
\lambda_{1}^{*\,2}&\dots &\lambda_{m}^{*\,2}\\
\hdotsfor[2]{3}\\
\lambda_{1}^{*\,(2m-2)}&\dots &\lambda_{m}^{*\,(2m-2)}
\end{vmatrix}=(-1)^{\frac{m(m-1)}{2}}
\begin{vmatrix}
\frac{\Psi_{1}(t)}{\Psi_{1}''(\tau)}&\dots&\frac{\Psi_{m}(t)}{\Psi_{m}''(\tau)}\\
1&\dots&1\\
\frac{\Psi_{1}^{(4)}(t)}{\Psi_{1}''(\tau)}&\dots&\frac{\Psi_{m}^{(4)}(t)}{\Psi_{m}''(\tau)}\\
\hdotsfor[2]{3}\\
\frac{\Psi_{1}^{(2m-2)}(t)}{\Psi_{1}''(\tau)}&\dots&\frac{\Psi_{m}^{(2m-2)}(t)}{\Psi_{m}''(\tau)}
\end{vmatrix}.
\end{equation}

Differentiating \eqref{eq4.4} and substituting $t=\tau$, we get for $s\ge 1$
\begin{eqnarray*}
\Psi_i^{(s+2)}(\tau)&=&-w'(\tau)w^{-1}(\tau)\Psi_i^{(s+1)}(\tau)-(s(w'(\tau)w^{-1}(\tau))'+\lambda_i'^2)
\Psi_i^{(s)}(\tau)
\\
&-&\sum_{j=2}^{s-1}\binom{s}{j-1}(w'(\tau)w^{-1}(\tau))^{(s+1-j)}\Psi_i^{(j)}(\tau),\quad \Psi_i(\tau)=\Psi_i'(\tau)=0.
\end{eqnarray*}
From this recurrence formula by induction we deduce that for $k=0,1,\dots$
\begin{equation}\label{eq4.8}
\begin{gathered}
\Psi_i^{(2k+2)}(\tau)=(r_0^k+r_1^k\lambda_i^{*\,2}+\cdots+r_k^k\lambda_i^{*\,2k})\Psi_i''(\tau),\\
\Psi_i^{(2k+3)}(\tau)=(p_0^k+p_1^k\lambda_i^{*\,2}+\cdots+p_k^k\lambda_i^{*\,2k})\Psi_i''(\tau),
\end{gathered}
\end{equation}
where $r_0^k,\dots,r_k^k$, $p_0^k,\dots,p_k^k$ depend from $\alpha$, $\beta$, $\tau$
and do not depend from $\lambda_i^{*}$. Moreover, $r_k^k=(-1)^k$.

From \eqref{eq4.8} it follows that for $k=1,2,\dots$
\begin{equation}\label{eq4.9}
\frac{\Psi_i^{(2k+1)}(\tau)}{\Psi_i''(\tau)}=\sum_{s=1}^{k}c_{s}^1\,
\frac{\Psi_i^{(2s)}(\tau)}{\Psi_i''(\tau)},
\end{equation}
\begin{equation}\label{eq4.10}
\frac{\Psi_i^{(2k+2)}(\tau)}{\Psi_i''(\tau)}=\sum_{s=1}^{k}c_{s}^2\,
\frac{\Psi_i^{(2s)}(\tau)}{\Psi_i''(\tau)}+(-1)^k\lambda_i'^{2k},
\end{equation}
where $c_{s}^1$, $c_{s}^2$ do not depend from $\lambda_i^{*}$.
Applying \eqref{eq4.10}, we obtain \eqref{eq4.7}
\[
\begin{vmatrix}
\frac{\Psi_{1}(t)}{\Psi_{1}''(\tau)}&\dots&\frac{\Psi_{m}(t)}{\Psi_{m}''(\tau)}\\
1&\dots&1\\
\frac{\Psi_{1}^{(4)}(t)}{\Psi_{1}''(\tau)}&\dots&\frac{\Psi_{m}^{(4)}(t)}{\Psi_{m}''(\tau)}\\
\hdotsfor[2]{3}\\
\frac{\Psi_{1}^{(2m-2)}(t)}{\Psi_{1}''(\tau)}&\dots&\frac{\Psi_{m}^{(2m-2)}(t)}{\Psi_{m}''(\tau)}
\end{vmatrix}=
\begin{vmatrix}
\frac{\Psi_{1}(t)}{\Psi_{1}''(\tau)}&\dots&\frac{\Psi_{m}(t)}{\Psi_{m}''(\tau)}\\
1&\dots&1\\
(-1)\lambda_1^{*\,2}&\dots &(-1)\lambda_{m}^{*\,2}\\
\hdotsfor[2]{3}\\
(-1)^{m-1}\lambda_1^{*\,(2m-2)}&\dots &(-)^{m-1}\lambda_{m}^{*\,(2m-2)}
\end{vmatrix}.
\]

The equalities \eqref{eq4.6} and \eqref{eq4.7} mean that
\[
r_m(\tau)=r_m''(\tau)=\cdots=r_m^{(2m-2)}(\tau)=0.
\]
According to \eqref{eq4.9},
\begin{eqnarray*}
r_m^{(2j+1)}(\tau)&=&\sum_{i=1}^{m}A_i^{*}\,\frac{\Psi_i^{(2j+1)}(\tau)}
{\Psi_i''(\tau)}=\sum_{i=1}^{m}A_i^{*}\sum_{s=1}^{j}c_s^1(\alpha)\,
\frac{\Psi_i^{(2s)}(\tau)} {\Psi_i'(\tau)}\\
&=&\sum_{s=1}^{j}c_s^1(\alpha)\sum_{i=1}^{m}A_i^{*}\,\frac{\Psi_i^{(2s)}(\tau)}
{\Psi_i'(\tau)}=\sum_{s=1}^{j}c_s^1(\alpha)r^{(2s)}(\tau)=0,\quad j=1,\dots,m-1.
\end{eqnarray*}
Since $r_m'(\tau)=0$, then at the point $t=\tau$ the polynomial $r_m(t)$ has zero of multiplicity~$2m$.

We show that it has no other zeros on the interval $[0,\tau]$. We take into account  that the system  $\{\psi_{i}(t)\}_{i=1}^m$
is a Chebyshev system on the interval $(0,\tau)$ (see Theorem~\ref{thm-5} above) and any polynomial of order $m$ on the interval
$(0,\tau)$ has at most $m-1$ zeros, counting multiplicity.

We consider the following polynomial in Chebyshev system $\{\Psi_i(t)\}_{i=1}^m$:
\begin{equation}\label{eq4.11}
r_m(t,\varepsilon)=
\begin{vmatrix}
\frac{\Psi_{1}(t)}{\Psi_{1}''(\tau)}&\dots&\frac{\Psi_{m}(t)}{\Psi_{m}''(\tau)}\\
\frac{\Psi_{1}(\tau-\varepsilon)}{\varepsilon^2\Psi_{1}''(\tau)}&\dots&\frac{\Psi_{m}(\tau-\varepsilon)}{\varepsilon^2\Psi_{m}''(\tau)}\\
\frac{\Psi_{1}(\tau-2\varepsilon)}{(2\varepsilon)^4\Psi_{1}''(\tau)}&\dots&\frac{\Psi_{m}(\tau-2\varepsilon)}{(2\varepsilon)^4\Psi_{1}''(\tau)}\\
\hdotsfor[2]{3}\\
\frac{\Psi_{1}(\tau-(m-1)\varepsilon)}{((m-1)\varepsilon)^{2m-2}\Psi_{1}''(\tau)}
&\dots&\frac{\Psi_{m}(\tau-(m-1)\varepsilon)}{((m-1)\varepsilon)^{2m-2}\Psi_{m}''(\tau)}
\end{vmatrix}.
\end{equation}
 For any $0<\varepsilon<\tau/(m-1)$, it has $m-1$
zeros at the points $t_j=\tau-j\varepsilon$, $j=1,\dots,m-1$, and has no other zeros on $(0,\tau)$.
The limit polynomial as $\varepsilon\to 0$ does not have zeros on $(0,\tau)$.

In order to calculate it, we apply  the expansions


\[
\frac{\Psi_i(\tau-j\varepsilon)}{(j\varepsilon)^{2j}\Psi_i''(\tau)}=
\sum_{s=2}^{2j-1}\frac{\Psi_i^{(s)}(\tau)}{s!\,(-j\varepsilon)^{2j-s}\Psi_i''(\tau)}+
\frac{\Psi_i^{(2j)}(\tau)+o(1)}{(2j)!\,\Psi_i''(\tau)},\quad j=1,\dots,m-1,
\]
formulas \eqref{eq4.9} and \eqref{eq4.10}, and we subtract  successively in the determinant \eqref{eq4.11}
from the subsequent rows the previous ones to obtain
\[
r_m(t,\varepsilon)=\frac{1}{\prod_{j=1}^m(2j)!}
\begin{vmatrix}
\frac{\Psi_{1}(t)}{\Psi_{1}''(\tau)}&\dots&\frac{\Psi_{m}(t)}{\Psi_{m}''(\tau)}\\
1+o(1)&\dots&
1+o(1)\\
\frac{\Psi_{1}^{(4)}(\tau)+o(1)}{\Psi_{1}''(\tau)}
&\dots&\frac{\Psi_{m}^{(4)}(\tau)+o(1)}{\Psi_{m}''(\tau)}\\
\hdotsfor[2]{3}\\
\frac{\Psi_{1}^{(2m-2)}(\tau)+o(1)}{\Psi_{1}''(\tau)}
&\dots&\frac{\Psi_{m}^{(2m-2)}(\tau)+o(1)}{\Psi_{m}''(\tau)}
\end{vmatrix}.
\]
From here and  \eqref{eq4.6}, \eqref{eq4.7} it follows  that
\[
\lim\limits_{\varepsilon\to 0}r_m(t,\varepsilon)=c_2r_m(t),\quad c_2>0.
\]
Hence, the polynomial $r_m(t)$ is positive on the interval $[0,\tau)$.

The polynomial $r_m(t,\varepsilon)$ vanishes at $m$ points, including $\tau$, and therefore
its derivative $r_m'(t,\varepsilon)$ has $m-1$ zeros between $\tau-(m-1)\varepsilon$ and $\tau$.
Since the system $\{\Psi_i'(t)\}_{i=1}^{m}$ is  the Chebyshev system on $(0,\tau)$ (see Theorem~\ref{thm-5} below),
then $r_m'(t,\varepsilon)$ does not have zeros on $(0,\tau)$. Hence, for $\varepsilon\to 0$ we derive that $r_m'(t)$
does not have zeros on $(0,\tau)$. Since $r_{m}(0)>0$ and $r_{m}(\tau)=0$, then $r_{m}'(t)<0$ on $(0,\tau)$.
Thus, the polynomial $r_{m}(t)$ decreases on the interval
$[0,\tau]$.

Since $\Psi_i''(\tau)=-\lambda_{i}^{*\,2}\psi_{\lambda_{i}^{*}}(\tau)$, polynomial \eqref{eq4.5} can be written as
\[
r_m(t)=\sum_{i=1}^m\frac{A_i^{*}(\tau)}{\lambda_{i}^{*\,2}}+\sum_{i=1}^mB_i^{*}(\tau)\psi_{\lambda_i^{*}(\tau)}(t).
\]
Setting $\lambda=0$ in \eqref{eq4.3} we obtain
$
\sum_{i=1}^m\frac{A_i^{*}(\tau)}{\lambda_{i}^{*\,2}}=1,
$ 
 therefore
\[
r_m(t)=1+\sum_{i=1}^mB_i^{*}(\tau)\psi_{\lambda_i^{*}(\tau)}(t).
\]
This  polynomial has positive coefficients and the unique zero $t=\tau$ of multiplicity $2m$ on the interval $[0,\tau]$. Since $\psi_{\lambda}(t)=\varphi_{\lambda}(t)/\varphi_0(t)$ and $\varphi_0(t)>0$, the function
\[
G_{n}(t)=\varphi_0(t)+\sum_{i=1}^mB_i^{*}(t_m^{*}(\gamma))\varphi_{\lambda_i^{*}(t_m^{*}(\gamma))}(t)
\]
is of the form \eqref{eq4.1}, positive definite, and such that $t=t_m^{*}(\gamma)$ is a unique zero of multiplicity~$2m$ on the interval $[0,t_m(\gamma)]$. Hence,
\[
\mathrm{L}\,(G_n,2m)\le t_m^{*}(\gamma).
\]
\end{proof}


\begin{rem}
From the proof of Theorem~\ref{thm-2} it follows that inequality \eqref{eq4.2} is also valid
for functions  represented by
\[
g(t)=\int_{0}^{\gamma}\psi_{\lambda}(t)\,d\nu(\lambda),\quad g(0)>0,
\]
with a nonnegative bounded Stieltjes measure $d\nu$.
\end{rem}

\end{document}